\makeatletter\@addtoreset {equation}{section}\makeatother
\newtheorem{thm}{Theorem}
\newtheorem{lem}{Lemma}
\newtheorem{mydef}{Definition}
\newtheorem{remark}{Remark}
\newtheorem{assumption}{Assumption}
\newtheorem{rhp}{Riemann-Hilbert problem}
{\begin{trivlist} \item[]{\em Proof }}%
{\hspace*{\fill}$\rule{.3\baselineskip}{.35\baselineskip}$\end{trivlist}}
\DeclareMathOperator{\Imag}{Im}
\DeclareMathOperator{\Real}{Re}
\DeclareMathOperator{\C}{\mathbb{C}}
\DeclareMathOperator{\R}{\mathbb{R}}
\DeclareMathOperator{\eps}{\varepsilon}
\begin{document}

\title[Inverse scattering for the massive Thirring model]{Inverse scattering for the massive Thirring model}

\author{Dmitry E. Pelinovsky}
\address[D.E. Pelinovsky]{Department of Mathematics, McMaster University, Hamilton ON, Canada, L8S 4K1}
\email{dmpeli@math.mcmaster.ca}

\author{Aaron Saalmann}
\address[A. Saalmann]{Mathematisches Institut, Universit\"{a}t zu K\"{o}ln, 50931 K\"{o}ln, Germany}
\email{asaalman@math.uni-koeln.de}

\thanks{A.S. gratefully acknowledges financial support from the project SFB-TRR 191
``Symplectic Structures in Geometry, Algebra and Dynamics" (Cologne University, Germany).}

\begin{abstract}
We consider the massive Thirring model in the laboratory coordinates and explain how the inverse scattering transform
can be developed with the Riemann--Hilbert approach. The key ingredient of our technique is to transform the corresponding
spectral problem to two equivalent forms: one is suitable for the spectral
parameter at the origin and the other one is suitable for the spectral parameter at infinity. Global solutions to the massive Thirring model
are recovered from the reconstruction formulae at the origin and at infinity.
\end{abstract}

\maketitle

\section{Introduction}

The massive Thirring model (MTM) was derived by Thirring in 1958 \cite{Thirring}
in the context of general relativity. It represents a relativistically invariant
nonlinear Dirac equation in the space of one dimension. Another relativistically
invariant one-dimensional Dirac equation is given by the Gross--Neveu model \cite{gross-neveu}
also known as the massive Soler model \cite{Soler} when it is written in the space of three dimensions.

It was discovered in 1970s by Mikhailov \cite{M}, Kuznetsov and Mikhailov \cite{KM},
Orfanidis \cite{O}, Kaup and Newell \cite{KN}
that the MTM is integrable with the inverse scattering transform method
in the sense that it admits a Lax pair, countably many conserved quantities, the B\"{a}cklund
transformation, and other common features of integrable models. We write the MTM system
in the laboratory coordinates by using the normalized form:
\begin{equation}\label{e mtm}
  \left\{
    \begin{array}{ll}
      i(u_t+u_x) + v + |v|^2u = 0,\\
      i(v_t-v_x) + u + |u|^2v = 0.
    \end{array}
  \right.
\end{equation}
The MTM system (\ref{e mtm}) appears as the compatibility condition in the Lax representation
\begin{equation}\label{e mtm presentation}
  L_t-A_x+[L,A]=0,
\end{equation}
where the $2\times 2$-matrices $L$ and $A$ are given by
\begin{equation}\label{e def L}
  L=\frac{i}{4}(|u|^2-|v|^2)\sigma_3-
  \frac{i\lambda}{2}
  \left(
    \begin{array}{cc}
      0 & \overline{v} \\
      v & 0 \\
    \end{array}
  \right)+
  \frac{i}{2\lambda}
  \left(
    \begin{array}{cc}
      0 & \overline{u} \\
      u & 0 \\
    \end{array}
  \right)+
  \frac{i}{4}\left(\lambda^2-\frac{1}{\lambda^2}\right) \sigma_3
\end{equation}
and
\begin{equation}\label{e def A}
  A=-\frac{i}{4}(|u|^2+|v|^2)\sigma_3-
  \frac{i\lambda}{2}
  \left(
    \begin{array}{cc}
      0 & \overline{v} \\
      v & 0 \\
    \end{array}
  \right)-
  \frac{i}{2\lambda}
  \left(
    \begin{array}{cc}
      0 & \overline{u} \\
      u & 0 \\
    \end{array}
  \right)+
  \frac{i}{4}\left(\lambda^2+\frac{1}{\lambda^2}\right) \sigma_3.
\end{equation}
Other forms of $L$ and $A$ with nonzero trace have also been introduced by Barashenkov and Getmanov \cite{BG87}.
The traceless representation of $L$ and $A$ in (\ref{e def L}) and (\ref{e def A}) is more useful for inverse scattering transforms.

Formal inverse scattering results for the linear operators
 (\ref{e def L}) and (\ref{e def A}) can be found in \cite{KM}.
The first steps towards rigorous developments of the inverse scattering transform for the MTM system (\ref{e mtm})
were made in 1990s by Villarroel \cite{V} and Zhou \cite{Zhou}. In the former work, the treatment of
the Riemann--Hilbert problems is sketchy, whereas in the latter work, an abstract method to solve Riemann--Hilbert problems
with rational spectral dependence is developed with applications to the sine--Gordon equation in the laboratory
coordinates. Although the MTM system (\ref{e mtm}) does not appear in the list of examples in \cite{Zhou},
one can show that the abstract method of Zhou is also applicable to the MTM system.

The present paper relies on recent progress in the inverse scattering transform method
for the derivative NLS equation \cite{PSS,Pelinovsky-Shimabukuro}. The key element of our technique
is a transformation of the spectral plane $\lambda$ for the operator $L$ in (\ref{e def L}) to
the spectral plane $z = \lambda^2$ for a different spectral problem. This transformation can be
performed uniformly in the $\lambda$ plane for the Kaup--Newell spectral
problem related to the derivative NLS equation \cite{KN78}. In the contrast, one needs to
consider separately the subsets of the $\lambda$ plane near the origin and near infinity
for the operator $L$ in (\ref{e def L}) due to its rational dependence on $\lambda$.
Therefore, two Riemann--Hilbert problems are derived for the MTM system (\ref{e mtm}) with
the components $(u,v)$: the one near $\lambda = 0$ recovers $u$ and the other one near $\lambda = \infty$
recovers $v$.

Let $\dot{L}^{2,m}(\mathbb{R})$ denote the space of square integrable functions with the weight $|x|^m$
for $m \in \mathbb{Z}$ so that $L^{2,m}(\mathbb{R}) \equiv \dot{L}^{2,m}(\R) \cap L^2(\R)$.
Let $\dot{H}^{n,m}(\mathbb{R})$ denote the Sobolev space of functions, the $n$-th derivative of which is square integrable
with the weight $|x|^m$ for $n \in \mathbb{N}$ and $m \in \mathbb{Z}$
so that $H^{n,m}(\mathbb{R}) \equiv \dot{H}^{n,m}(\R) \cap \dot{L}^{2,m}(\R) \cap H^n(\R)$
with $H^n(\R) \equiv \dot{H}^n(\R) \cap L^2(\R)$. Norms on any of these spaces are
introduced according to the standard convention.

The inverse scattering transform for the linear operators (\ref{e def L}) and (\ref{e def A})
can be controlled when the potential $(u,v)$ belongs to the function space
\begin{equation}
\label{X-2-1}
X_{(u,v)} := H^2(\mathbb{R}) \cap H^{1,1}(\mathbb{R}).
\end{equation}
Transformations of the spectral plane employed here allow us to give a sharp requirement
on the $L^2$-based Hilbert spaces, for which the Riemann--Hilbert problem can be solved
by using the technique from Deift and Zhou \cite{D-Z-1,Z}. Note that both the direct and inverse
scattering transforms for the NLS equation are solved in function space $H^1(\mathbb{R}) \cap L^{2,1}(\mathbb{R})$,
which is denoted by the same symbol $H^{1,1}(\R)$ in the previous works \cite{D-Z-1,Z}.
Compared to this space, the reflection coefficients $(r_+,r_-)$ introduced
in our paper for the linear operators (\ref{e def L}) and (\ref{e def A}) belong to the function space
\begin{equation}
\label{X-1-2}
X_{(r_+,r_-)} := \dot{H}^1(\R \setminus [-1,1]) \cap \dot{H}^{1,1}([-1,1])
\cap \dot{L}^{2,1}(\mathbb{R}) \cap \dot{L}^{2,-2}(\mathbb{R}).
\end{equation}

In the application of the inverse scattering transform to the derivative NLS equation,
alternative methods were recently developed \cite{Perry1,Perry} based on a different
(gauge) transformation of the Kaup--Newell spectral problem to the spectral problem for the
Gerdjikov--Ivanov equation. Both the potentials and the reflection coefficients
were controlled in the same function space $H^2(\mathbb{R}) \cap L^{2,2}(\mathbb{R})$
\cite{Perry1,Perry}. These function spaces are more restrictive compared to the function spaces
for the potential and the reflection coefficients used in \cite{PSS,Pelinovsky-Shimabukuro}.

Unlike the recent literature on the derivative NLS equation, our interest to the inverse scattering for the
MTM system (\ref{e mtm}) is not related to the well-posedness problems. Indeed, the local and global existence
of solutions to the Cauchy problem for the MTM system (\ref{e mtm}) in the $L^2$-based Sobolev spaces
$H^m(\mathbb{R})$, $m \in \mathbb{N}$ can be proven with the standard contraction and energy methods,
see review of literature in \cite{Pel-survey}. Low regularity solutions in $L^2(\mathbb{R})$ were already
obtained for the MTM system by Selberg and Tesfahun \cite{SelTes}, Candy \cite{Candy}, Huh \cite{Huh11,Huh13,Huh15}, and Zhang \cite{Zhang,ZhangZhao}.
The well-posedness results can be formulated as follows.

\begin{thm}\cite{Candy,Huh15}
For every $(u_0,v_0) \in H^m(\mathbb{R})$, $m \in \mathbb{N}$, there exists a unique global solution $(u,v) \in C(\mathbb{R},H^m(\R))$
such that $(u,v) |_{t = 0} = (u_0,v_0)$ and the solution $(u,v)$ depends continuously on the initial data $(u_0,v_0)$.
Moreover, for every $(u_0,v_0) \in L^2(\mathbb{R})$, there exists a global solution $(u,v) \in C(\mathbb{R},L^2(\R))$
such that $(u,v) |_{t = 0} = (u_0,v_0)$. The solution $(u,v)$ is unique in a certain subspace of $C(\R,L^2(\R))$ and
it depends continuously on the initial data $(u_0,v_0)$.
\label{theorem-well-posed}
\end{thm}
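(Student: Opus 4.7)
The plan is to split the analysis into the subcritical regime $m \geq 1$, where classical contraction-mapping and conservation laws suffice, and the critical $L^2$ regime, which demands the harmonic-analytic machinery of \cite{Candy,Huh15}. In both cases the first step is to diagonalize the transport: integrating along the left- and right-moving characteristics turns \eqref{e mtm} into the Duhamel system
\begin{align*}
u(t,x) &= u_0(x-t) + i \int_0^t \bigl[ v + |v|^2 u \bigr](s, x-t+s)\, ds, \\
v(t,x) &= v_0(x+t) + i \int_0^t \bigl[ u + |u|^2 v \bigr](s, x+t-s)\, ds,
\end{align*}
and I look for a fixed point of the resulting map $\Phi(u,v) = (u,v)$ in an appropriate space.

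For $m \geq 1$, I would work in $C([0,T];H^m(\R))$. The embedding $H^1(\R) \hookrightarrow L^\infty(\R)$ combined with the algebra property of $H^m(\R)$ makes the cubic nonlinearities locally Lipschitz on bounded balls, so $\Phi$ is a contraction for $T = T(\|(u_0,v_0)\|_{H^m})$, producing a unique local solution depending continuously on the data. To upgrade this to a global solution, I would establish the charge conservation
\[
Q(t) := \int_\R \bigl(|u(t,x)|^2 + |v(t,x)|^2\bigr)\, dx,
\]
which follows by multiplying the first equation in \eqref{e mtm} by $\bar u$, the second by $\bar v$, taking imaginary parts, and integrating. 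Charge control gives an a priori $L^2$ bound; the higher $H^m$ norms are then propagated by a Gronwall argument applied to the $\partial_x^m$-differentiated energy identities, closed via the already-controlled $L^\infty$ norm. Alternatively, one may exploit the countable sequence of polynomial conservation laws generated by the Lax pair \eqref{e mtm presentation}, which directly produces a priori $H^m$ bounds for every $m\in\mathbb{N}$.

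For the $L^2$ statement, Sobolev embedding into $L^\infty$ is no longer available and the cubic nonlinearity is at critical scaling with respect to the naive $C([0,T];L^2)$ norm, so a straightforward contraction cannot close. Following \cite{Candy,Huh15}, I would introduce two Bourgain-type resolution spaces adapted to the half-waves $\partial_t \pm \partial_x$, decompose $u$ and $v$ into dyadic pieces in both space-time frequency and modulation, and prove a trilinear estimate for expressions of the form $\bar v\, v\, u$ and $\bar u\, u\, v$ that exploits the null structure intrinsic to the MTM interaction: the product $|v|^2 u$ couples one rightward-moving factor with two leftward-moving factors, so the characteristic varieties meet transversally and the resonant set is small. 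The fixed point then closes in an auxiliary subspace $Y_T \subset C([0,T];L^2)$ in which uniqueness is asserted, and charge conservation again provides the a priori bound needed to iterate to global times.

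The main obstacle is precisely this scaling-critical trilinear null-form estimate in the $L^2$ resolution space. The other ingredients — characteristic coordinates, contraction in $H^m$, and charge conservation — are routine, but extracting enough smoothing from the transversality of left- and right-moving transports to absorb a critically scaling cubic term requires the careful dyadic decomposition of \cite{Candy,Huh15} and is the only substantive difficulty in the theorem.
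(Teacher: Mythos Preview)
The paper does not give its own proof of this theorem: it is stated as a citation of \cite{Candy,Huh15} and used as background input, so there is no argument in the paper to compare your proposal against. Your outline is a reasonable summary of the strategy in those references --- local contraction in $H^m$ plus charge conservation and Gronwall for the subcritical part, and $X^{s,b}$-type spaces adapted to the two half-waves together with a trilinear null-form estimate for the critical $L^2$ part --- but since the present paper simply quotes the result, there is nothing further to assess here.
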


The inverse scattering transform and the reconstruction formulas for the global solutions
$(u,v)$ to the MTM system (\ref{e mtm}) can be used to solve other interesting analytical
problems such as long-range scattering to zero \cite{CandyLindblad},
orbital and asymptotic stability of the Dirac solitons \cite{CPS,PS2}, and an analytical proof of
the soliton resolution conjecture. Similar questions have been recently addressed
in the context of the cubic NLS equation \cite{CP14,C-P,S2017} and
the derivative NLS equation \cite{JLPS,LPS}.

The goal of our paper is to explain how the inverse scattering transform
for the linear operators (\ref{e def L}) and
(\ref{e def A}) can be developed by using the Riemann--Hilbert problem.
For simplicity of presentation, we assume that the initial data to the MTM system (\ref{e mtm})
admit no eigenvalues and resonances in the sense of Definition \ref{def-eigenvalues} given in Section \ref{sec-coef}.
Note that eigenvalues can be easily added by using B\"{a}cklund transformation
for the MTM system \cite{CPS}, whereas resonances can be removed by perturbations
of initial data \cite{Beals1984} (see relevant results in \cite{PSS}).
The following theorem represents the main result of this paper.

\begin{thm}
\label{theorem-main}
For every $(u_0,v_0) \in X_{(u,v)}$ admitting no eigenvalues or resonances in the sense
of Definition \ref{def-eigenvalues}, there is a direct scattering transform with the spectral data
$(r_+,r_-)$ defined in $X_{(r_+,r_-)}$. The unique solution $(u,v) \in C(\mathbb{R},X_{(u,v)})$ to the
MTM system (\ref{e mtm}) can be uniquely recovered by means of the inverse scattering transform
for every $t \in \mathbb{R}$.
\end{thm}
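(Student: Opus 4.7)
The plan is to implement a two-step inverse scattering transform adapted to the rational $\lambda$-dependence of $L$ in (\ref{e def L}), closely following the strategy of \cite{PSS,Pelinovsky-Shimabukuro} but with two Riemann--Hilbert problems in place of one. First, for the \emph{direct scattering} map, I would construct Jost solutions of the spectral problem $L\psi = 0$ for a given potential $(u_0,v_0) \in X_{(u,v)}$ using the two equivalent forms of the spectral problem mentioned in the introduction: one adapted to $\lambda$ near $0$ and one to $\lambda$ near $\infty$. A Volterra-type iteration in each form yields analyticity in the appropriate half-plane, together with continuous boundary values, and the usual scattering relations between left- and right-Jost solutions define reflection coefficients $(r_+,r_-)$. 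The key estimate is that the $H^2 \cap H^{1,1}$-regularity of $(u_0,v_0)$ is translated, via Plancherel-type bounds on the Jost integral formulas, exactly into the splitting (\ref{X-1-2}): one derivative inside $[-1,1]$ and one derivative on $\mathbb{R}\setminus[-1,1]$, together with the weights encoded by $\dot L^{2,1}(\mathbb{R})$ and $\dot L^{2,-2}(\mathbb{R})$, which reflect the behaviour of the oscillatory phase at $\lambda=\pm 1$, at infinity, and at the origin.

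For the \emph{inverse} map, I would set up the two Riemann--Hilbert problems mentioned in the introduction: one with contour passing through $\lambda=0$, whose normalization recovers $u$ from the expansion of the sectionally holomorphic solution at the origin, and one with contour at $\lambda=\infty$, whose normalization recovers $v$ from the expansion at infinity. The no-eigenvalue/no-resonance assumption of Definition \ref{def-eigenvalues} rules out discrete spectrum and supplies the symmetry of the jump matrices needed for the Beals--Coifman/Zhou machinery, so each RHP has a unique $L^2$-solution depending Lipschitz-continuously on $(r_+,r_-)$. Reading off $u(x)$ from the $\lambda\to 0$ expansion and $v(x)$ from the $\lambda\to\infty$ expansion produces a candidate $(u,v)$, and the norm of the reconstructed pair in $X_{(u,v)}$ is controlled by the norm of $(r_+,r_-)$ in $X_{(r_+,r_-)}$ — this is the reverse of the direct-transform estimate and, combined with it, gives a Lipschitz bijection between the two spaces.

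Time dependence enters only through the $A$-operator in (\ref{e def A}), which contributes an explicit multiplier of the form $e^{it(\lambda^2+\lambda^{-2})/2}$ in the evolution of the reflection coefficients. Because this factor is unimodular and smooth away from $\lambda = 0,\infty$, the time-evolved pair $(r_+(t,\cdot),r_-(t,\cdot))$ remains in $X_{(r_+,r_-)}$ with norm uniformly bounded on $\mathbb{R}$ and depends continuously on $t$; hence both RHPs are solvable at every $t$, and the reconstruction yields $(u(t,\cdot),v(t,\cdot)) \in X_{(u,v)}$ with continuous dependence on $t$. Uniqueness inside $C(\mathbb{R},X_{(u,v)})$ is inherited from the $H^2$-well-posedness recorded in Theorem \ref{theorem-well-posed}, since $X_{(u,v)} \subset H^2(\mathbb{R})$; it also follows directly from injectivity of the direct map.

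The hardest step is the sharp matching of function spaces in the inverse direction, namely showing that $(r_+,r_-) \in X_{(r_+,r_-)}$ implies the reconstructed $(u,v)$ lies in $H^2 \cap H^{1,1}$. The splitting of the contour at $|\lambda|=1$ is what forces the two-piece definition of $X_{(r_+,r_-)}$, and the factor $\dot L^{2,-2}(\mathbb{R})$ is precisely what is needed to absorb the two derivatives produced by the $\lambda\to 0$ expansion that recovers $u$. I expect most of the analytic work to consist in tracking these weights through the Cauchy integrals representing the RHP solutions and in verifying that no symmetry or cancellation is lost when the separate reconstructions at $0$ and at $\infty$ are combined into a single pair $(u,v)$ satisfying the MTM system (\ref{e mtm}).
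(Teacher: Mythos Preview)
Your plan is essentially the same as the paper's: two transformations of the spectral problem (via $z=\lambda^2$ and the inversion $\omega=z^{-1}$) yield two Riemann--Hilbert problems on $\mathbb{R}$ whose large-parameter expansions recover $u$ and $v$ respectively, with Lemma~\ref{lemma-r-coefficients} handling the direct map into $X_{(r_+,r_-)}$, Lemma~\ref{p estimate cauchy opearator} (Cauchy projections against the phase $\Theta(s)=\tfrac12(s-s^{-1})$) supplying the key technical estimate for the inverse map, and the unimodular multiplier $e^{-it(z+z^{-1})/2}$ giving the time evolution. One small correction: the $X_{(r_+,r_-)}$-norm of the time-evolved data is not uniformly bounded in $t$ (differentiating the phase brings down $z+z^{-1}$), but membership at each fixed $t$ is all that is needed and all that the paper claims.
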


The paper is organized as follows. Section \ref{sec-Jost} describes Jost functions obtained
after two transformations of the differential operator $L$ given by (\ref{e def L}). Section \ref{sec-coef}
is used to set up scattering coefficients $(r_+,r_-)$ and to introduce the scattering relations between
the Jost functions. Section \ref{sec-RH} explains how the Riemann--Hilbert problems
can be solved and how the potentials $(u,v)$ can be recovered
in the time evolution of the MTM system (\ref{e mtm}). Section \ref{sec-conclusion}
concludes the paper with a review of open questions.

\section{Jost functions}
\label{sec-Jost}

The linear operator $L$ in (\ref{e def L}) can be rewritten in the form:
\begin{equation*}
  L = Q(\lambda;u,v)+\frac{i}{4}\left(\lambda^2-\frac{1}{\lambda^2}\right) \sigma_3,
\end{equation*}
where
$$
  Q(\lambda;u,v) = \frac{i}{4}(|u|^2-|v|^2)\sigma_3-
  \frac{i\lambda}{2}
  \left(
    \begin{array}{cc}
      0 & \overline{v} \\
      v & 0 \\
    \end{array}
  \right)+
  \frac{i}{2\lambda}
  \left(
    \begin{array}{cc}
      0 & \overline{u} \\
      u & 0 \\
    \end{array}
  \right).
$$
Here we freeze the time variable $t$ and drop it from the list of arguments.
Assuming fast decay of $(u,v)$ to $(0,0)$ as $|x| \to \infty$,
solutions to the spectral problem
\begin{equation}
\label{L-spectral}
\psi_x = L \psi
\end{equation}
can be defined by the following asymptotic behavior:
\begin{eqnarray*}
  \psi^{(-)}_1(x;\lambda)\sim
  \left(
    \begin{array}{c}
      1 \\
      0 \\
    \end{array}
  \right)e^{i x(\lambda^2-\lambda^{-2})/4}
  ,\qquad
  \psi^{(-)}_2(x;\lambda)\sim
  \left(
    \begin{array}{c}
      0 \\
      1 \\
    \end{array}
  \right)e^{-i x(\lambda^2-\lambda^{-2})/4}
  \quad \text{ as } x\to -\infty
\end{eqnarray*}
and
\begin{eqnarray*}
  \psi^{(+)}_1(x;\lambda)\sim
  \left(
    \begin{array}{c}
      1 \\
      0 \\
    \end{array}
  \right)e^{i x(\lambda^2-\lambda^{-2})/4}
  ,\qquad
  \psi^{(+)}_2(x;\lambda)\sim
  \left(
    \begin{array}{c}
      0 \\
      1 \\
    \end{array}
  \right)e^{-i x(\lambda^2-\lambda^{-2})/4}
  \quad \text{ as } x\to+\infty.
\end{eqnarray*}

The \emph{normalized Jost functions}
\begin{equation}
\label{norm-Jost}
    \varphi_{\pm}(x;\lambda)=\psi^{(\pm)}_1(x;\lambda) e^{-i x(\lambda^2-\lambda^{-2})/4},\qquad
    \phi_{\pm}(x;\lambda)=\psi^{(\pm)}_2(x;\lambda) e^{i x(\lambda^2-\lambda^{-2})/4}
\end{equation}
satisfy the constant boundary conditions at infinity:
\begin{equation}\label{e asymptotics psi}
    \lim_{x\to\pm\infty}\varphi_{\pm}(x;\lambda)=e_1\quad\text{ and } \quad\lim_{x\to\pm\infty}\phi_{\pm}(x;\lambda)=e_2,
\end{equation}
where $e_1=(1,0)^T$ and $e_2=(0,1)^T$. The normalized Jost functions are
solutions to the following Volterra integral equations:
\begin{subequations}\label{e volterra phi varphi}
\begin{eqnarray}\label{e volterra phi}
        \varphi_{\pm}(x;\lambda)&=e_1 + \int_{\pm\infty}^x
        \left(
          \begin{array}{cc}
            1 & 0 \\
            0 & e^{-\frac{i}{2}(\lambda^2-\lambda^{-2})(x-y)} \\
          \end{array}
        \right)
        Q(\lambda;u(y),v(y))\;\varphi_{\pm}(y;\lambda)dy,\\
        \label{e volterra varphi}
        \phi_{\pm}(x;\lambda)&=e_2 + \int_{\pm\infty}^x
        \left(
          \begin{array}{cc}
            e^{\frac{i}{2}(\lambda^2-\lambda^{-2})(x-y)} & 0 \\
            0 & 1\\
          \end{array}
        \right)
        Q(\lambda;u(y),v(y))\;\phi_{\pm}(y;\lambda)dy.
    \end{eqnarray}
\end{subequations}

A standard assumption in analyzing Volterra integral equations is $Q(\lambda;u(\cdot),v(\cdot))\in L^1(\R)$
for fixed $\lambda\neq 0$ which is equivalent to $(u,v)\in L^1(\R)\cap L^2(\R)$ by the definition of $Q$.
In this case, for every $\lambda \in (\R\cup i\R)\setminus\{0\}$, Volterra integral
equations (\ref{e volterra phi varphi}) admit unique solutions $\varphi_{\pm}(\cdot;\lambda)$ and $\phi_{\pm}(\cdot;\lambda)$ in the space $L^{\infty}(\R)$. However, even if $(u,v) \in L^1(\R)\cap L^2(\R)$ the $L^1$-norm of $Q(\lambda;u(\cdot),v(\cdot))$ is not controlled uniformly in $\lambda$ as $\lambda \to 0$ and $|\lambda| \to \infty$. This causes difficulties in studying the behaviour of $\varphi_{\pm}(\cdot;\lambda)$ and $\phi_{\pm}(\cdot;\lambda)$
as $\lambda \to 0$ and $|\lambda| \to \infty$ and thus we need to transform the spectral problem (\ref{L-spectral})
to two equivalent forms. These two transformations generalize the exact transformation
of the Kaup--Newell spectral problem to the Zakharov--Shabat spectral problem, see \cite{KN78,Pelinovsky-Shimabukuro}.

\subsection{Transformation of the Jost functions for small $\lambda$}

Assume $u \in L^{\infty}(\R)$, $\lambda \neq 0$, and define the transformation matrix by
\begin{equation}\label{e def T}
  T(u;\lambda):=
  \left(
    \begin{array}{cc}
      1 & 0 \\
      u & \lambda^{-1}
    \end{array}
  \right).
\end{equation}
Let $\psi$ be a solution of the spectral problem (\ref{L-spectral}) and define $\Psi := T \psi$.
Straightforward computations show that $\Psi$ satisfies the equivalent linear equation
\begin{equation}\label{e Lax1 Psi}
  \Psi_x=\mathcal{L}\Psi,
\end{equation}
with new linear operator
\begin{equation}
\label{L-expansion1}
   \mathcal{L} = Q_1(u,v)+\lambda^2 Q_2(u,v)+ \frac{i}{4}\left(\lambda^2- \frac{1}{\lambda^2}\right)\sigma_3
\end{equation}
where
\begin{equation*}
  Q_1(u,v) =
  \left(
    \begin{array}{cc}
      -\frac{i}{4}(|u|^2+|v|^2) & \frac{i}{2}\overline{u} \\
      u_x-\frac{i}{2}u|v|^2- \frac{i}{2}v & \frac{i}{4}(|u|^2+|v|^2) \\
    \end{array}
  \right), \quad
  Q_2 (u,v) = \frac{i}{2}
  \left(
    \begin{array}{cc}
     u\overline{v} & -\overline{v} \\
      u+u^2\overline{v} & -u\overline{v} \\
    \end{array}
  \right).
\end{equation*}

Let us define $z := \lambda^2$ and introduce the partition $\mathbb{C} = B_0 \cup \mathbb{S}^1 \cup B_{\infty}$ with
\begin{equation}\label{e def B_1}
  B_0 := \{z\in\C: \quad |z|<1\}, \quad \mathbb{S}^1 := \{z\in\C: \quad |z|=1\}, \quad
  B_{\infty} := \{ z \in \C : \quad |z| > 1 \}.
\end{equation}
The second term in (\ref{L-expansion1}) is bounded if $z \in B_0$.
The normalized Jost functions associated to the spectral problem (\ref{e Lax1 Psi})
denoted by $\{m_{\pm},n_{\pm}\}$ can be obtained from the original Jost functions
$\{ \varphi_{\pm},\psi_{\pm}\}$ by the transformation formulas:
\begin{equation}\label{e m n trafo}
  m_{\pm}(x;z) = T(u(x);\lambda)\varphi_{\pm}(x;\lambda),\qquad
  n_{\pm}(x;z) = \lambda\, T(u(x);\lambda)\phi_{\pm}(x;\lambda),
\end{equation}
subject to the constant boundary conditions at infinity:
\begin{equation}\label{e asymptotics psi m n}
    \lim_{x\to\pm\infty} m_{\pm}(x;\lambda)=e_1\quad\text{ and } \quad\lim_{x\to\pm\infty} n_{\pm}(x;\lambda)=e_2.
\end{equation}
The transformed Jost functions are
solutions to the following Volterra integral equations:
\begin{subequations}\label{e volterra m n}
\begin{eqnarray}\label{e volterra m}
 & \phantom{t} &  m_{\pm}(x;z) = e_1 \\
 \nonumber
 & \phantom{t} & \phantom{text} \phantom{text} +\!\! \int_{\pm\infty}^x
        \left(
          \begin{array}{cc}
            1 & 0 \\
            0 & e^{-\frac{i}{2}(z-z^{-1})(x-y)} \\
          \end{array}
        \right)
        \left[ Q_1(u(y),v(y)) + z Q_2(u(y),v(y))\right] m_{\pm}(y;z)dy,\\
        \label{e volterra n}
& \phantom{t} &       n_{\pm}(x;z) =  \\
 \nonumber
 & \phantom{t} & \phantom{text} \phantom{text} e_2 + \!\!\int_{\pm\infty}^x
        \left(
          \begin{array}{cc}
            e^{\frac{i}{2}(z-z^{-1})(x-y)} & 0 \\
            0 & 1\\
          \end{array}
        \right)
        \left[ Q_1(u(y),v(y)) + z Q_2(u(y),v(y))\right] n_{\pm}(y;z)dy.
    \end{eqnarray}
\end{subequations}
Compared to \cite{Pelinovsky-Shimabukuro}, we have an additional term $\frac{i}{2} z (x-y)$
in the argument of the oscillatory kernel and the additional term $z Q_2(u,v)$ under the integration sign. However,
both additional terms are bounded in $B_0$ where $|z| < 1$. Therefore,
the same analysis as in the proof of Lemmas 1 and 2 in \cite{Pelinovsky-Shimabukuro} yields the following.

\begin{lem}\label{l solvability m n}
  Let $(u,v)\in L^1(\R)\cap L^{\infty}(\R)$ and $u_x\in L^1(\R)$.
  For every $z \in (-1,1)$, there exist unique solutions $m_{\pm}(\cdot;z)\in L^{\infty}(\R)$ and
  $n_{\pm}(\cdot;z)\in L^{\infty}(\R)$ satisfying the integral equations (\ref{e volterra m n}).
  For every $x\in\R$, $m_{\pm}(x,\cdot)$ and $n_{\mp}(x,\cdot)$ are continued analytically in
  $\C^{\pm} \cap B_0$.  There exist a positive constant $C$ such that
  \begin{equation}\label{e global bound m n}
    \|m_{\pm}(\cdot;z)\|_{L^{\infty}}+ \|n_{\mp}(\cdot;z)\|_{L^{\infty}}\leq C, \quad z\in \C^{\pm} \cap B_0.
  \end{equation}
\end{lem}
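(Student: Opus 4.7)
The plan is a standard Volterra fixed-point argument for the integral equations \eqref{e volterra m n}, adapting Lemmas 1--2 of \cite{Pelinovsky-Shimabukuro}. The two new features already flagged in the excerpt --- the extra phase $\tfrac{i}{2}z(x-y)$ in the exponential kernel and the extra term $z Q_2$ in the integrand --- are both uniformly bounded in $B_0$, so the earlier scheme carries over with only cosmetic modifications.

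The first ingredient is an $L^1$ bound on the effective potential $Q_{\text{eff}}(y;z) := Q_1(u(y),v(y)) + z Q_2(u(y),v(y))$. Under the hypotheses $(u,v)\in L^1(\R)\cap L^\infty(\R)$ and $u_x\in L^1(\R)$, the pointwise inequality $|f|^k \leq \|f\|_\infty^{k-1}|f|$ places $|u|^2,|v|^2,u|v|^2,u\bar v,u^2\bar v$ in $L^1(\R)$, while $\bar u, v, \bar v, u_x$ are in $L^1(\R)$ directly; hence every matrix entry of $Q_1$ and $Q_2$ is in $L^1(\R)$, and $\|Q_{\text{eff}}(\cdot;z)\|_{L^1} \leq M$ uniformly for $|z|\leq 1$. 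The second ingredient is the bound on the oscillatory kernel. A direct calculation with $z=a+bi$ gives $\Imag(z-z^{-1}) = b(1+|z|^{-2})$, with the same sign as $\Imag z$ throughout $B_0\setminus\{0\}$. In the $m_+$ equation the integration range is $y\geq x$, so $x-y\leq 0$, and the $(2,2)$ kernel entry $e^{-\tfrac{i}{2}(z-z^{-1})(x-y)}$ has modulus $\exp\bigl[\tfrac{1}{2}(x-y)\Imag(z-z^{-1})\bigr] \leq 1$ precisely when $z\in \overline{\C^+}\cap B_0$. An identical sign analysis places $m_-$ and $n_+$ in $\overline{\C^-}\cap B_0$ and $n_-$ in $\overline{\C^+}\cap B_0$, matching the claim.

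Combining the two ingredients, the Volterra operator $\mathcal{K}_\pm$ from \eqref{e volterra m n} sends $L^\infty(\R,\C^2)$ into itself with the standard factorial estimate $\|\mathcal{K}_\pm^n e_1\|_{L^\infty} \leq M^n/n!$, so the Neumann series $\sum_n \mathcal{K}_\pm^n e_1$ converges uniformly to the unique $L^\infty$ solution $m_\pm(\cdot;z)$ and produces the bound $\|m_\pm(\cdot;z)\|_\infty \leq e^M$. The analogous argument, with the opposite sign of the exponent, handles $n_\mp$ and yields \eqref{e global bound m n}. Uniqueness follows by applying the same factorial estimate to the difference of two candidate solutions. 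For the analytic continuation, I would fix $x\in\R$ and observe that each Neumann iterate is a finite integral of an integrand depending holomorphically on $z$ in the open set $\C^\pm\cap B_0$ (the exponential and $zQ_2$ are both entire in $z$ there, the integral is absolutely convergent by the kernel bound); Morera's theorem together with uniform convergence on compact subdomains then gives analyticity of the limit.

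The main obstacle I anticipate is the borderline behaviour near $\partial B_0 = \mathbb{S}^1$: on the unit circle the exponential has modulus exactly $1$ and no longer contributes decay, so the constant $C$ in \eqref{e global bound m n} could a priori blow up as $|z|\to 1^-$ under the bare hypotheses of this lemma. Sharper control requires the stronger potential spaces underlying \eqref{X-2-1}, and the complementary region $B_\infty$ is handled later in the paper via the second transformation of the spectral problem mentioned in the excerpt rather than by extending the present argument through $|z|=1$.
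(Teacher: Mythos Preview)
Your argument is correct and is exactly what the paper does: it gives no independent proof of this lemma but simply refers to Lemmas~1--2 of \cite{Pelinovsky-Shimabukuro}, noting that the extra phase $\tfrac{i}{2}z(x-y)$ and the extra term $zQ_2$ are both bounded for $|z|\leq 1$, so the Neumann-series argument with the factorial estimate carries over verbatim.

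One small correction to your closing remark: the constant does \emph{not} degenerate as $|z|\to 1^-$. Your own estimate uses only that the kernel has modulus $\leq 1$ (never that it contributes decay), and $M=\|Q_1\|_{L^1}+\|Q_2\|_{L^1}$ is independent of $z$; hence the bound $\|m_\pm\|_\infty\leq e^M$ is uniform on all of $\overline{\C^\pm}\cap\overline{B_0}$, including $\mathbb{S}^1$. The passage to $B_\infty$ via the second transformation is needed because $zQ_2$ is unbounded for large $|z|$, not because anything fails at $|z|=1$.
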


\begin{lem}\label{l expansion m n}
  Under the conditions of Lemma \ref{l solvability m n}, for every $x\in\R$
  the normalized Jost functions $m_{\pm}$ and $n_{\pm}$ satisfy
  the following limits as $\Imag(z) \to 0$ along a contour in the domains of their analyticity:
  \begin{equation}\label{e expansion m n}
    \lim_{z \to 0} \frac{m_{\pm}(x;z)}{m_{\pm}^{\infty}(x)} = e_1, \quad
    \lim_{z \to 0} \frac{n_{\pm}(x;z)}{n_{\pm}^{\infty}(x)} = e_2,
   \end{equation}
where
  \begin{equation*}
  m_{\pm}^{\infty}(x)= e^{-\frac{i}{4}\int_{\pm\infty}^{x}(|u|^2+|v|^2)dy}, \quad
  n_{\pm}^{\infty}(x)= e^{\frac{i}{4}\int_{\pm\infty}^{x}(|u|^2+|v|^2)dy}.
  \end{equation*}
If in addition $u \in C^1(\mathbb{R})$, then
\begin{subequations} \label{e expansion m n-longer}
\begin{eqnarray} \label{e expansion m-longer}
\lim_{z \to 0} \frac{1}{z} \left[ \frac{m_{\pm}(x;z)}{m_{\pm}^{\infty}(x)} - e_1 \right] & = &
      \left(
        \begin{array}{c}
          -\int_{\pm\infty}^x \left[ \overline{u} (u_x-\frac{i}{2}u|v|^2- \frac{i}{2}v)-\frac{i}{2} u\overline{v}\right]dy \\
          2i u_x + u |v|^2 + v
        \end{array}
      \right),\\
\label{e expansion n-longer}
    \lim_{z \to 0} \frac{1}{z} \left[ \frac{n_{\pm}(x;z)}{n_{\pm}^{\infty}(x)} - e_2 \right] & = &
      \left(
        \begin{array}{c}
          \overline{u}\\
          \int_{\pm\infty}^x \left[\overline{u}(u_x-\frac{i}{2}u|v|^2- \frac{i}{2}v)-\frac{i}{2}u\overline{v}\right] dy \\
        \end{array}
      \right).
    \end{eqnarray}
\end{subequations}
\end{lem}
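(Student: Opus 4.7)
The plan is to exploit the structural asymmetry of the Volterra integrals (\ref{e volterra m n}): in each vector equation one component is driven by a non-oscillatory diagonal kernel while the other carries the factor $e^{\mp\frac{i}{2}(z-z^{-1})(x-y)}$, whose exponent blows up like $\pm 1/(2z)$ as $z\to 0$ along a contour in $\C^{\pm}\cap B_0$, forcing the oscillatory entry to be $\mathcal{O}(z)$ and decoupling the limiting system. First I would separate $m_\pm=(m_\pm^{(1)},m_\pm^{(2)})^{T}$ and apply a single integration by parts in $y$ inside the scalar equation (\ref{e volterra m}) for $m_\pm^{(2)}$. The exponential antiderivative produces the prefactor $\frac{2z}{i(z^2-1)}=\mathcal{O}(z)$ times $[(Q_1+zQ_2)m_\pm]^{(2)}$ evaluated at $y=x$; the boundary term at $y=\pm\infty$ vanishes because $u,v$ decay and because the exponential itself decays toward the relevant infinity in the analyticity domain; the remaining integral carries a $y$-derivative of the integrand and is controlled in absolute value by $\|u_x\|_{L^1}$, $\|(u,v)\|_{L^1\cap L^\infty}$, and the uniform bound (\ref{e global bound m n}). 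The net outcome is the a priori smallness $m_\pm^{(2)}(x;z)=\mathcal{O}(z)$.

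Substituting this into the equation for $m_\pm^{(1)}$ and sending $z\to 0$ reduces it to the scalar linear Volterra equation
\[
m_\pm^{(1)}(x;0)=1-\frac{i}{4}\int_{\pm\infty}^x\bigl(|u|^2+|v|^2\bigr)(y)\,m_\pm^{(1)}(y;0)\,dy,
\]
whose unique solution is $m_\pm^\infty(x)$; this is the first identity in (\ref{e expansion m n}). The formula for $n_\pm^\infty$ follows by the mirror argument applied to (\ref{e volterra n}), where the oscillatory kernel now sits on the first row and the non-oscillatory ODE is driven by $+\tfrac{i}{4}(|u|^2+|v|^2)$, producing the reciprocal exponential. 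For the refined expansion (\ref{e expansion m n-longer}) the $C^1$ hypothesis on $u$ makes the leading part of the integrand continuous at $y=x$, so the integration-by-parts identity improves to
\[
m_\pm^{(2)}(x;z)=\frac{2z}{i(z^2-1)}\bigl(u_x-\tfrac{i}{2}u|v|^2-\tfrac{i}{2}v\bigr)(x)\,m_\pm^\infty(x)+o(z),
\]
giving $z^{-1}m_\pm^{(2)}/m_\pm^\infty\to 2iu_x+u|v|^2+v$, the bottom entry of (\ref{e expansion m-longer}). Inserting this back into the first-row equation, subtracting $m_\pm^\infty$, dividing by $zm_\pm^\infty$, and taking the limit produces the top entry as the displayed $x$-integral; the symmetric argument yields (\ref{e expansion n-longer}).

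The main technical obstacle is legitimizing the integration by parts and the passage to the limit uniformly along a contour approaching $z=0$ inside $\C^{\pm}\cap B_0$, since the phase $\tfrac{1}{2}(z-z^{-1})$ is complex and diverges. The key observation is that $\Imag(z-z^{-1})=\Imag(z)\bigl(1+|z|^{-2}\bigr)$ shares the sign of $\Imag(z)$, so the oscillatory exponential in (\ref{e volterra m n}) actually decays toward the relevant infinity in each analyticity domain; this both annihilates the boundary terms at $y=\pm\infty$ and enables dominated-convergence control of the remainder integrals via the $L^1\cap L^\infty$ hypotheses on $u,v$ and the $L^1$ bound on $u_x$, together with (\ref{e global bound m n}). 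A secondary subtlety is that the $o(z)$ remainder in the refined expansion is exactly what forces the extra $u\in C^1(\R)$ hypothesis in (\ref{e expansion m n-longer}). The whole scheme is structurally the one developed in \cite{Pelinovsky-Shimabukuro} for the Kaup--Newell problem, with the uniformly bounded extra terms $zQ_2$ and $\tfrac{i}{2}z(x-y)$ absorbing into the estimates without change.
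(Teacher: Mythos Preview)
Your overall strategy---isolate the oscillatory component, show it is small, reduce the non-oscillatory component to a scalar Volterra equation solved by the exponential---is exactly the mechanism behind the result, and the paper itself gives no independent argument but simply refers to Lemmas~1 and~2 of \cite{Pelinovsky-Shimabukuro}. In that sense your plan is aligned with the intended proof.

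There is, however, a concrete gap in the integration-by-parts step. After transferring the $y$-derivative onto the integrand you need
\[
\partial_y\big[(Q_1+zQ_2)m_{\pm}\big]^{(2)}\in L^1_y(\R)
\]
uniformly for small $z$. This fails under the hypotheses of Lemma~\ref{l solvability m n}, and even under the extra assumption $u\in C^1(\R)$: the $(2,1)$ entry of $Q_1$ already contains $u_x$, so its $y$-derivative produces $u_{xx}$; the $(2,2)$ entry produces $v_x$ when differentiated; and $\partial_y m_{\pm}^{(2)}$, read off from the ODE, carries the term $-\tfrac{i}{2}(z-z^{-1})m_{\pm}^{(2)}$, which is $O(z^{-1})$ before you have proved anything. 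None of these are controlled by $\|u_x\|_{L^1}$, $\|(u,v)\|_{L^1\cap L^\infty}$ and the bound (\ref{e global bound m n}).

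The repair is straightforward. For (\ref{e expansion m n}) you only need $m_{\pm}^{(2)}(x;z)\to 0$, not $\mathcal{O}(z)$: since $\Imag(z-z^{-1})$ has a fixed sign in $\C^{\pm}\cap B_0$, the kernel $e^{-\frac{i}{2}(z-z^{-1})(x-y)}$ decays to $0$ pointwise for $y\neq x$ and dominated convergence against an $L^1$ integrand suffices. For (\ref{e expansion m n-longer}), instead of integrating by parts write $g(y)=g(x)+(g(y)-g(x))$ with $g=[(Q_1+zQ_2)m_{\pm}]^{(2)}$: the first piece integrates explicitly to $\tfrac{2z}{i(z^2-1)}g(x)$, while the second piece, split into $|y-x|<\delta$ (small by continuity of $g$, which is where $u\in C^1$ enters) and $|y-x|\ge\delta$ (killed by the decaying exponential), is $o(z)$. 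This yields your displayed formula for $m_\pm^{(2)}$ without differentiating $Q_1$.
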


\begin{remark}
By Sobolev's embedding of $H^1(\mathbb{R})$ into the space of continuous, bounded, and decaying at infinity functions,
if $u \in H^1(\mathbb{R})$, then $u \in C(\mathbb{R}) \cap L^{\infty}(\mathbb{R})$ and $u(x) \to 0$ as $|x| \to \infty$.
By the embedding of $L^{2,1}(\R)$ into $L^1(\R)$, if $u \in H^{1,1}(\R)$, then $u \in L^1(\R)$ and $u_x\in L^1(\R)$.
Thus, requirements of Lemma \ref{l solvability m n} are satisfied
if $(u,v) \in H^{1,1}(\R)$. The additional requirement
$u \in C^1(\R)$ of Lemma \ref{l expansion m n} is satisfied if $u \in H^2(\R)$.
Hence, $X_{(u,v)}$ in (\ref{X-2-1})  is an optimal $L^2$-based Sobolev space for direct scattering
of the MTM system (\ref{e mtm}).
\end{remark}

\begin{remark}
Notations $(m_{\pm},n_{\pm})$ for the Jost functions used here are different from
notations $(m_{\pm},n_{\pm})$ used in \cite{Pelinovsky-Shimabukuro}, where an additional transformation
was used to generate $n_{\pm}$ (denoted by $p_{\pm}$ in \cite{Pelinovsky-Shimabukuro}).
This additional transformation is not necessary for our further work.
\end{remark}

\subsection{Transformation of the Jost functions for large $\lambda$}

Assume $v \in L^{\infty}(\R)$ and define the transformation matrix by
\begin{equation}\label{e def T hat}
  \widehat{T}(v;\lambda) :=
  \left(
    \begin{array}{cc}
      1 & 0 \\
      v & \lambda
    \end{array}
  \right).
\end{equation}
Let $\psi$ be a solution of the spectral problem (\ref{L-spectral}) and define $\widehat{\Psi}:= \widehat{T} \psi$.
Straightforward computations show that $\widehat{\Psi}$ satisfies the equivalent linear equation
\begin{equation}\label{e Lax1 Psi hat}
  \widehat{\Psi}_x=\widehat{\mathcal{L}}\widehat{\Psi},
\end{equation}
with new linear operator
\begin{equation}
\label{L-expansion2}
   \widehat{\mathcal{L}}=\widehat{Q}_1(u,v)+ \frac{1}{\lambda^2} \widehat{Q}_2(u,v)+ \frac{i}{4}\left(\lambda^2- \frac{1}{\lambda^2}\right)\sigma_3
\end{equation}
where
\begin{equation*}
  \widehat{Q}_1(u,v) =
  \left(
    \begin{array}{cc}
      \frac{i}{4}(|u|^2+|v|^2) & -\frac{i}{2}\overline{v} \\
      v_x+\frac{i}{2}|u|^2v+ \frac{i}{2}u & -\frac{i}{4}(|u|^2+|v|^2) \\
    \end{array}
  \right), \quad
  \widehat{Q}_2(u,v) = -\frac{i}{2}
  \left(
    \begin{array}{cc}
     \overline{u}v & -\overline{u} \\
      v+\overline{u}v^2 & -\overline{u}v \\
    \end{array}
  \right).
\end{equation*}

We introduce the same variable $z := \lambda^2$ and note that the second term in (\ref{L-expansion2})
is now bounded for $z \in B_{\infty}$. The normalized Jost functions associated to the spectral problem
(\ref{e Lax1 Psi}) denoted by $\{\widehat{m}_{\pm},\widehat{n}_{\pm}\}$ can be obtained
from the original Jost functions $\{ \varphi_{\pm},\psi_{\pm}\}$ by the transformation formulas:
\begin{equation}\label{e m n hat trafo}
  \widehat{m}_{\pm}(x;z)=\widehat{T}(v(x);\lambda)\varphi_{\pm}(x;\lambda),\qquad
  \widehat{n}_{\pm}(x;z)= \lambda^{-1} \widehat{T}(v(x);\lambda)\phi_{\pm}(x;\lambda),
\end{equation}
subject to the constant boundary conditions at infinity:
\begin{equation}\label{e asymptotics psi m n hat}
    \lim_{x\to\pm\infty} \widehat{m}_{\pm}(x;\lambda)=e_1\quad\text{ and } \quad\lim_{x\to\pm\infty} \widehat{n}_{\pm}(x;\lambda)=e_2.
\end{equation}
The transformed Jost functions are
solutions to the following Volterra integral equations:
\begin{subequations}\label{e volterra m n hat}
\begin{eqnarray}\label{e volterra m hat}
 &  \phantom{t} & \phantom{te}
        \widehat{m}_{\pm}(x;z) =  \\
 \nonumber
 & \phantom{t} & \phantom{text} \phantom{text} e_1 + \int_{\pm\infty}^x
        \left(
          \begin{array}{cc}
            1 & 0 \\
            0 & e^{-\frac{i}{2}(z-z^{-1})(x-y)} \\
          \end{array}
        \right)
        \left[\widehat{Q}_1(u(y),v(y)) + z^{-1} \widehat{Q}_2(u(y),v(y))\right] \widehat{m}_{\pm}(y;z)dy,\\
\label{e volterra n hat}
&  \phantom{t} & \phantom{te}
\widehat{n}_{\pm}(x;z) =  \\
 \nonumber
 & \phantom{t} & \phantom{text} \phantom{text} e_2 + \int_{\pm\infty}^x
        \left(
          \begin{array}{cc}
            e^{\frac{i}{2}(z-z^{-1})(x-y)} & 0 \\
            0 & 1\\
          \end{array}
        \right)
        \left[\widehat{Q}_1(u(y),v(y))+ z^{-1} \widehat{Q}_2(u(y),v(y))\right] \widehat{n}_{\pm}(y;z)dy.
    \end{eqnarray}
\end{subequations}
Again, we have an additional term $\frac{i}{2} z^{-1} (x-y)$
in the argument of the oscillatory kernel and the additional term $z^{-1} \widehat{Q}_2(u,v)$ under the integration sign.
However, both additional terms are bounded in $B_{\infty}$ where $|z| > 1$.
The following two lemmas contain results analogous to Lemmas \ref{l solvability m n} and \ref{l expansion m n}.

\begin{lem}\label{l solvability m n hat}
  Let $(u,v)\in L^1(\R)\cap L^{\infty}(\R)$ and $v_x\in L^1(\R)$.
  For every $z \in \R \setminus [-1,1]$, there exist unique solutions
  $\widehat{m}_{\pm}(\cdot;z)\in L^{\infty}(\R)$ and $\widehat{n}_{\pm}(\cdot;z)\in L^{\infty}(\R)$
  satisfying the integral equations (\ref{e volterra m n hat}). For every $x\in\R$,
  $\widehat{m}_{\pm}(x,\cdot)$ and $\widehat{n}_{\mp}(x,\cdot)$ are continued analytically in
  $\C^{\pm} \cap B_{\infty}$. There exist a positive constant $C$ such that
  \begin{equation}\label{e global bound m n hat}
    \| \widehat{m}_{\pm}(\cdot;z)\|_{L^{\infty}} +
    \| \widehat{n}_{\mp}(\cdot;z)\|_{L^{\infty}}\leq C, \quad z \in \C^{\pm} \cap B_{\infty}.
  \end{equation}
\end{lem}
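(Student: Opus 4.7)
The plan is to mirror the Volterra iteration that established Lemma \ref{l solvability m n}, interchanging the roles of $z$ and $z^{-1}$ so that smallness is exploited on $B_\infty$ instead of $B_0$. I would rewrite (\ref{e volterra m n hat}) as fixed-point problems $\widehat{m}_\pm = e_1 + K_\pm^{(z)} \widehat{m}_\pm$ and $\widehat{n}_\pm = e_2 + J_\pm^{(z)} \widehat{n}_\pm$, where the integral operators act by multiplication by a diagonal matrix with nontrivial entry $e^{\mp\frac{i}{2}(z-z^{-1})(x-y)}$ applied to the matrix-valued potential $\widehat{Q}_1(u,v) + z^{-1}\widehat{Q}_2(u,v)$. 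Two elementary facts drive the estimates: on $B_\infty$ one has $|z^{-1}| \leq 1$, and a direct computation gives
\begin{equation*}
\mathrm{Im}(z - z^{-1}) = \mathrm{Im}(z)\bigl(1 + |z|^{-2}\bigr),
\end{equation*}
so $\mathrm{Im}(z - z^{-1})$ carries the same sign as $\mathrm{Im}(z)$. The exponential kernel therefore has modulus $\leq 1$ on exactly the half-plane pairing claimed in the lemma: $\widehat{m}_\pm$ on $\mathbb{C}^\pm \cap B_\infty$ and $\widehat{n}_\mp$ on $\mathbb{C}^\pm \cap B_\infty$.

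With both the $z^{-1}$ prefactor and the exponential bounded by $1$ on the appropriate domain, a standard Picard iteration would yield the pointwise estimate
\begin{equation*}
\|(\text{iterate})_k\|_{L^\infty} \leq \frac{1}{k!}\bigl(\|\widehat{Q}_1(u,v)\|_{L^1} + \|\widehat{Q}_2(u,v)\|_{L^1}\bigr)^k,
\end{equation*}
so the Neumann series converges absolutely and uniformly in $x \in \mathbb{R}$ and in $z \in \mathbb{C}^\pm \cap B_\infty$. The hypotheses $(u,v) \in L^1(\mathbb{R}) \cap L^\infty(\mathbb{R})$ together with $v_x \in L^1(\mathbb{R})$ make every entry of $\widehat{Q}_1$ (containing $|u|^2$, $|v|^2$, $u$, $v$, $v_x$, $|u|^2 v$) and of $\widehat{Q}_2$ (containing $\overline{u} v$, $\overline{u}$, $v$, $\overline{u} v^2$) integrable on $\mathbb{R}$, which yields existence and uniqueness in $L^\infty(\mathbb{R})$ together with the uniform bound (\ref{e global bound m n hat}). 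Analyticity on $\mathbb{C}^\pm \cap B_\infty$ then follows because every iterate is holomorphic in $z$ (the parameter enters only through $z^{-1}$ and through the exponential) and the Neumann series converges uniformly on compact subsets, so Weierstrass transfers analyticity to the limit.

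The main obstacle is uniformity as $|z| \to \infty$, where the phase $(z - z^{-1})(x-y)$ oscillates arbitrarily fast. This is resolved by the sign identity above: only the sign of $\mathrm{Im}(z - z^{-1})$, never its magnitude, enters the estimate of the one-sided Volterra integral, so the kernel bound remains $\leq 1$ for all $|z| > 1$ in the correct half-plane. The same estimate is valid for $z \in \mathbb{R} \setminus [-1,1]$, where the exponential is purely oscillatory with modulus $1$, which is why the existence statement covers the real axis outside $[-1,1]$. The only points that must be excluded are $z = \pm 1$, where $z - z^{-1} = 0$ produces no oscillation but where the full $B_\infty$ restriction is not met; these are outside the open domains named in the lemma.
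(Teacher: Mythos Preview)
Your proposal is correct and follows the same Neumann-series argument that the paper invokes (by referring to the analogous Lemma \ref{l solvability m n} and ultimately to Lemmas 1 and 2 of \cite{Pelinovsky-Shimabukuro}): bound the oscillatory kernel by $1$ via the sign identity $\mathrm{Im}(z-z^{-1})=\mathrm{Im}(z)(1+|z|^{-2})$, use $|z^{-1}|\le 1$ on $B_\infty$, and conclude with the $1/k!$ Volterra iteration and Weierstrass for analyticity. One small remark: your last paragraph overstates the issue at $z=\pm 1$---there the kernel equals $1$ and the iteration works without obstruction; those points are excluded from the statement only because they lie on $\mathbb{S}^1=\partial B_\infty$, not for any analytic reason.
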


\begin{lem}\label{l expansion m n hat}
Under the conditions of Lemma \ref{l solvability m n hat}, for every $x\in\R$
the normalized Jost functions $\widehat{m}_{\pm}$ and $\widehat{n}_{\pm}$ satisfy
  the following limits as $\Imag(z) \to \infty$ along a contour in the domains of their analyticity:
  \begin{equation}\label{e expansion m n hat}
    \lim_{|z| \to \infty} \frac{\widehat{m}_{\pm}(x;z)}{\widehat{m}_{\pm}^{\infty}(x)} = e_1, \quad
    \lim_{|z| \to \infty} \frac{\widehat{n}_{\pm}(x;z)}{\widehat{n}_{\pm}^{\infty}(x)} = e_2,
   \end{equation}
where
  \begin{equation*}
    \widehat{m}_{\pm}^{\infty}(x)= e^{\frac{i}{4}\int_{\pm\infty}^{x}(|u|^2+|v|^2) dy}, \quad
    \widehat{n}_{\pm}^{\infty}(x)= e^{-\frac{i}{4}\int_{\pm\infty}^{x}(|u|^2+|v|^2)dy}.
  \end{equation*}
If in addition $v \in C^1(\mathbb{R})$, then
\begin{subequations} \label{e expansion m n-longer hat}
\begin{eqnarray} \label{e expansion m-longer hat}
    \lim_{|z| \to \infty} z \left[ \frac{\widehat{m}_{\pm}(x;z)}{\widehat{m}_{\pm}^{\infty}(x)} - e_1 \right] & = &
      \left(
        \begin{array}{c}
          - \int_{\pm\infty}^x \left[\overline{v} (v_x+\frac{i}{2}|u|^2v+ \frac{i}{2}u)+\frac{i}{2} \overline{u}v\right]dy \\
          - 2 i v_x + |u|^2 v + u \\
        \end{array}
      \right),\\
\label{e expansion n-longer hat}
    \lim_{|z| \to \infty} z \left[ \frac{\widehat{n}_{\pm}(x;z)}{\widehat{n}_{\pm}^{\infty}(x)} - e_2 \right] & = &
      \left(
        \begin{array}{c}
          \overline{v}\\
          \int_{\pm\infty}^x \left[\overline{v}(v_x+\frac{i}{2}|u|^2 v + \frac{i}{2} u) + \frac{i}{2}\overline{u} v \right] dy
        \end{array}
      \right).
    \end{eqnarray}
\end{subequations}
\end{lem}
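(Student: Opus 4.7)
The plan is to carry the same strategy used for Lemma \ref{l expansion m n} across to the present setting, with the roles of $z=0$ and $z=\infty$ interchanged and $z$ replaced by $z^{-1}$ as the small parameter. The Volterra integral equations (\ref{e volterra m n hat}) have exactly the structural form of (\ref{e volterra m n}) under this substitution (with $Q_1,Q_2$ replaced by $\widehat{Q}_1,\widehat{Q}_2$), and the uniform bound (\ref{e global bound m n hat}) from Lemma \ref{l solvability m n hat} plays the role of (\ref{e global bound m n}), so the argument transposes directly.

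For the leading-order limits (\ref{e expansion m n hat}), write $\widehat{m}_\pm=(M_1,M_2)^T$ componentwise in (\ref{e volterra m hat}). The equation for $M_2$ is driven by the oscillatory kernel $e^{-\frac{i}{2}(z-z^{-1})(x-y)}$; inside the analyticity domain $\C^\pm\cap B_\infty$ the exponent has non-positive real part, so the kernel is bounded by $1$ uniformly in $z$ and vanishes pointwise as $|z|\to\infty$ for every $y\neq x$. Combined with the $L^1$ control on the coefficients of $\widehat{Q}_1,\widehat{Q}_2$ and the uniform bound on $M_{1,2}$, dominated convergence yields $M_2(x;z)\to 0$. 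Plugging this into the equation for $M_1$ and dropping the $z^{-1}\widehat{Q}_2$ contribution leaves the scalar ODE $M_1'=\frac{i}{4}(|u|^2+|v|^2)M_1$ with $M_1(\pm\infty)=1$, whose unique solution is $\widehat{m}_\pm^\infty$. An identical argument applied to (\ref{e volterra n hat}) gives the analogous limit for $\widehat{n}_\pm$.

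For the next-order expansions (\ref{e expansion m n-longer hat}), the extra hypothesis $v\in C^1(\R)$ turns $v_x$ into a genuine function, which enables integration by parts in the oscillatory integral for $M_2$. Writing $e^{-\frac{i}{2}(z-z^{-1})(x-y)}=\frac{2}{i(z-z^{-1})}\partial_y e^{-\frac{i}{2}(z-z^{-1})(x-y)}$ and using $(z-z^{-1})^{-1}=z^{-1}+O(z^{-3})$, the boundary term at $y=x$ contributes $-\frac{2i}{z}\bigl(v_x+\frac{i}{2}|u|^2v+\frac{i}{2}u\bigr)(x)\,M_1(x;z)$, the boundary at $y=\pm\infty$ vanishes by the decay of $u,v,v_x$, and the remaining integral is $o(z^{-1})$ by a second dominated-convergence step. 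After the leading-order substitution $M_1/\widehat{m}_\pm^\infty\to 1$ this yields the second entry of (\ref{e expansion m-longer hat}). The first entry follows by substituting the $O(z^{-1})$ asymptotics of $M_2$ back into the equation for $M_1$, collecting the two $z^{-1}$ contributions (from $-\frac{i}{2}\overline{v}M_2$ and from the diagonal entry $-\frac{i}{2z}\overline{u}v$ of $z^{-1}\widehat{Q}_2$), and solving the resulting inhomogeneous first-order ODE by variation of parameters with integrating factor $\widehat{m}_\pm^\infty$. The same sequence of steps — now with $N_1$ (oscillatory) and $N_2$ (non-oscillatory) — produces (\ref{e expansion n-longer hat}).

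The principal obstacle is uniform control of the two dominated-convergence arguments in $x$: the oscillatory kernel decays only exponentially in $|x-y|$ and its modulus approaches $1$ as $y\to x$, while the error terms of order $o(z^{-1})$ must be bounded without loss of a power of $z$. Both difficulties are resolved by using the $L^\infty_x$ bound on $M_1,M_2$ from Lemma \ref{l solvability m n hat} together with the $L^1$-integrability of the entries of $\widehat{Q}_1$ and $\widehat{Q}_2$ (which follows from $(u,v)\in L^1\cap L^\infty$, $v_x\in L^1$, and hence from $(u,v)\in H^{1,1}(\R)$ via the remark after Lemma \ref{l expansion m n}); the extra regularity $v\in H^2(\R)\subset C^1(\R)$ is needed exactly to justify the integration by parts and to ensure that the boundary contribution at $y=x$ is a continuous function of $x$.
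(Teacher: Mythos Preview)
The paper gives no explicit proof of this lemma; it simply declares the result ``analogous to Lemmas \ref{l solvability m n} and \ref{l expansion m n}'', which in turn are referred to Lemmas~1--2 of \cite{Pelinovsky-Shimabukuro}. Your sketch supplies considerably more detail than the paper itself, and your overall strategy---transpose the argument of Lemma~\ref{l expansion m n} with $z^{-1}$ as the small parameter, use dominated convergence for the leading order, then extract the next order---is exactly the intended one.

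There is one technical point worth flagging in your next-order step. Your integration by parts against the oscillatory kernel produces a bulk integral involving $\partial_y F$, where $F$ contains the entry $(\widehat{Q}_1)_{21}=v_y+\tfrac{i}{2}|u|^2v+\tfrac{i}{2}u$. Differentiating this brings in $v_{yy}$, which is not guaranteed to exist under the stated hypothesis $v\in C^1(\R)$; your final paragraph quietly upgrades to $v\in H^2(\R)$ to cover this, but that is stronger than what the lemma assumes. A related circularity is that $\partial_y F$ also contains $\partial_y M_2=-\tfrac{i}{2}(z-z^{-1})M_2+\text{(bounded)}$, and controlling this in $L^1$ uniformly in $z$ presupposes the very $O(z^{-1})$ bound on $M_2$ you are establishing.

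The cleaner route---and the one implicit in \cite{Pelinovsky-Shimabukuro}---avoids integration by parts altogether: substitute the ansatz $\widehat{m}_\pm=\widehat{m}_\pm^\infty\bigl(e_1+z^{-1}\mu^{(1)}+\cdots\bigr)$ directly into the differential equation $\partial_x\widehat{m}_\pm=[\widehat{Q}_1+z^{-1}\widehat{Q}_2+\tfrac{i}{4}(z-z^{-1})(\sigma_3-I)]\widehat{m}_\pm$ and match powers of $z$. The second row at order $z^0$ determines $\mu^{(1)}_2$ \emph{algebraically} (so no derivative of $v_x$ appears), and the first row at order $z^{-1}$ gives a first-order ODE for $\mu^{(1)}_1$ that you solve with the same integrating factor you describe. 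The role of $v\in C^1$ is then only to make $(\widehat{Q}_1)_{21}$, and hence $\mu^{(1)}_2$, a continuous function of $x$. Since in the paper's applications one has $(u,v)\in H^2(\R)$ anyway, your argument is adequate there; but for the lemma as stated the formal-expansion route is the correct justification.
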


\subsection{Continuation of the transformed Jost functions across $\mathbb{S}^1$}

In Lemmas \ref{l solvability m n} and \ref{l solvability m n hat} we showed
the existence of the transformed Jost functions
$$
\{m_{\pm}(\cdot;z),n_{\pm}(\cdot;z)\}, \quad z \in B_0, \qquad
\mbox{\rm and} \qquad
\{\widehat{m}_{\pm}(\cdot;z),\widehat{n}_{\pm}(\cdot;z)\}, \quad z \in B_{\infty},
$$
respectively, where the partition (\ref{e def B_1}) is used. Because both sets of
the transformed Jost functions are connected to the set
$\{\varphi_{\pm},\phi_{\pm}\}$ of the original Jost functions by the transformation formulas
(\ref{e m n trafo}) and (\ref{e m n hat trafo}), respectively, we find
the following connection formulas for every $z \in \mathbb{S}^1$:
\begin{subequations}\label{e contuniation formulas}
\begin{eqnarray}
  \label{e contuniation formula m} m_{\pm}(x;z) &=&
  \left(
    \begin{array}{cc}
      1 & 0 \\
      u(x) - z^{-1} v(x) & z^{-1} \\
    \end{array}
  \right) \widehat{m}_{\pm}(x;z),
   \\
  \label{e contuniation formula n}n_{\pm}(x;z) &=&
  \left(
    \begin{array}{cc}
      z & 0 \\
      u(x) z - v(x) & 1 \\
    \end{array}
  \right) \widehat{n}_{\pm}(x;z),
\end{eqnarray}
\end{subequations}
or in the opposite direction,
\begin{subequations}\label{e contuniation formulas backward}
\begin{eqnarray}
  \label{e contuniation formula m hat}
  \widehat{m}_{\pm}(x;z) &=&
  \left(
    \begin{array}{cc}
      1 & 0 \\
      v(x)- z u(x) & z\\
    \end{array}
  \right) m_{\pm}(x;z),
  \\
  \label{e contuniation formula n hat}\widehat{n}_{\pm}(x;z) &=&
  \left(
    \begin{array}{cc}
      z^{-1} & 0 \\
      v(x) z^{-1} - u(x) & 1 \\
    \end{array}
  \right) n_{\pm}(x;z).
\end{eqnarray}
\end{subequations}

By Lemmas \ref{l solvability m n hat} and \ref{l expansion m n hat},
the right-hand sides of (\ref{e contuniation formula m}) and (\ref{e contuniation formula n}) yield
analytic continuations of $m_{\pm}(x;\cdot)$ and $n_{\mp}(x;\cdot)$ in $\C^{\pm} \cap B_{\infty}$ respectively
with the following limits as ${\rm Im}(z) \to \infty$ along a contour in the domains of their analyticity:
\begin{equation}
\label{limits-m-n}
  \lim_{|z| \to \infty} \frac{m_{\pm}(x;z)}{\widehat{m}_{\pm}^{\infty}(x)} = e_1 + u(x) e_2, \quad
    \lim_{|z| \to \infty} \frac{n_{\pm}(x;z)}{\widehat{n}_{\pm}^{\infty}(x)} = \bar{v}(x) e_1 + (1 + u(x) \bar{v}(x)) e_2.
\end{equation}
Analogously, by Lemmas \ref{l solvability m n} and \ref{l expansion m n}, the right-hand sides of
(\ref{e contuniation formula m   hat}) and (\ref{e contuniation formula n   hat})
yield analytic continuations of $\widehat{m}_{\pm}(x;\cdot)$ and $\widehat{n}_{\mp}(x;\cdot)$ in $\C^{\pm} \cap B_0$
respectively with the following limits as ${\rm Im}(z) \to 0$ along a contour in the domains of their analyticity:
\begin{equation}
\label{limits-m-n-hats}
  \lim_{z \to 0} \frac{\widehat{m}_{\pm}(x;z)}{m_{\pm}^{\infty}(x)} = e_1 + v(x) e_2, \quad
    \lim_{z \to 0} \frac{\widehat{n}_{\pm}(x;z)}{n_{\pm}^{\infty}(x)} = \bar{u}(x) e_1 + (1 + \bar{u}(x) v(x)) e_2.
\end{equation}
By Lemmas \ref{l solvability m n}, \ref{l expansion m n}, \ref{l solvability m n hat}, \ref{l expansion m n hat}, and the continuation formulas
(\ref{e contuniation formulas}), (\ref{e contuniation formulas backward}),
we obtain the following result.

\begin{lem}\label{l analytic continuation}
  Let $(u,v)\in L^1(\R)\cap L^{\infty}(\R)$ and $(u_x,v_x) \in L^1(\R)$.. For every $x\in\R$ the Jost functions
   defined by the integral equations (\ref{e volterra m n}) and (\ref{e volterra m n hat})
   can be continued such that $m_{\pm}(x;\cdot)$, $n_{\mp}(x;\cdot)$, $\widehat{m}_{\pm}(x;\cdot)$, and
  $\widehat{n}_{\mp}(x;\cdot)$ are analytic in $\C^{\pm}$ and continuous in $\C^{\pm} \cup\R$
  with bounded limits as $z \to 0$ and $|z| \to \infty$ given by (\ref{e expansion m n}), (\ref{e expansion m n hat}),
  (\ref{limits-m-n}), (\ref{limits-m-n-hats}).
\end{lem}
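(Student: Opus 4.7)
The plan is to assemble the statement from the analytic pieces established above and to glue them along $\mathbb{S}^1$ via the algebraic continuation formulas. Taking $m_+$ as the model case, Lemma~\ref{l solvability m n} supplies analyticity of $m_+(x;\cdot)$ on $\C^+ \cap B_0$, while Lemma~\ref{l solvability m n hat} supplies analyticity of $\widehat{m}_+(x;\cdot)$ on $\C^+ \cap B_\infty$. Formula (\ref{e contuniation formula m}) then extends $m_+(x;\cdot)$ into $\C^+ \cap B_\infty$ analytically, since its prefactor is a polynomial in $z^{-1}$ with no pole in $\C^+$. The remaining cases $m_-$, $n_\mp$, $\widehat{m}_\pm$, $\widehat{n}_\mp$ are treated identically using the corresponding formulas in (\ref{e contuniation formulas}) and (\ref{e contuniation formulas backward}). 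Continuity up to $\R$ comes from the Volterra representations on each side of $\mathbb{S}^1$.

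The main obstacle is to verify analyticity of the pasted function across $\mathbb{S}^1 \cap \C^+$ (and its conjugate). Both representations of $m_+$ originate from the same underlying Jost function $\varphi_+(x;\lambda)$ of the spectral problem (\ref{L-spectral}) via the transformations (\ref{e m n trafo}) and (\ref{e m n hat trafo}); equivalently, a short direct calculation shows that the matrices in (\ref{e contuniation formula m}) and (\ref{e contuniation formula m hat}) are inverses of one another, so the two formulas are compatible and agree on $\mathbb{S}^1$. This matching on the arc $\mathbb{S}^1 \cap \C^+$, together with analyticity on both sides and continuity across, upgrades to analyticity on all of $\C^+$ by Morera's theorem applied along the smooth arc. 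The same argument handles $\C^-$ by symmetry.

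The bounded limits at $z = 0$ and $|z| \to \infty$ then follow by inserting Lemmas~\ref{l expansion m n} and~\ref{l expansion m n hat} into the continuation formulas. Near $z = 0$, Lemma~\ref{l expansion m n} yields (\ref{e expansion m n}) for $m_\pm$ and $n_\mp$ directly, while (\ref{e contuniation formulas backward}) combined with these limits yields (\ref{limits-m-n-hats}) for $\widehat{m}_\pm$ and $\widehat{n}_\mp$. The case $|z| \to \infty$ is symmetric: Lemma~\ref{l expansion m n hat} governs $\widehat{m}_\pm$ and $\widehat{n}_\mp$, and (\ref{e contuniation formulas}) then produces (\ref{limits-m-n}) for $m_\pm$ and $n_\mp$. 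Assembling these pieces completes the proof; the only genuinely new input beyond the earlier lemmas is the circle-crossing step, which is reduced to the algebraic identity between the two transformation matrices.
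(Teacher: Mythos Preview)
Your proposal is correct and follows the same route as the paper, which in fact offers no separate proof but states the lemma as an immediate consequence of Lemmas~\ref{l solvability m n}--\ref{l expansion m n hat} together with the continuation formulas (\ref{e contuniation formulas})--(\ref{e contuniation formulas backward}). Your explicit Morera argument for the gluing across $\mathbb{S}^1\cap\C^\pm$ is a useful clarification of a step the paper leaves implicit; note that it could even be bypassed, since both representations arise from the same $\varphi_\pm$ wherever the latter exists, so the connection formulas actually hold on an open overlap and the two pieces are manifestly the same analytic function.
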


\section{Scattering coefficients}
\label{sec-coef}

In order to define the scattering coefficients between the transformed Jost functions $\{ m_{\pm},n_{\pm}\}$
and $\{\widehat{m}_{\pm},\widehat{n}_{\pm}\}$,  we go back to the original Jost functions  $\{ \varphi_{\pm},\phi_{\pm}\}$.
For every $\lambda\in(\R\cup i\R)\setminus\{0\}$, we define the standard form of the scattering relation by
\begin{eqnarray}\label{e scattering relation}
\left( \begin{array}{c} \varphi_{-}(x;\lambda)e^{i x(\lambda^2-\lambda^{-2})/4} \\
\phi_{-}(x;\lambda)e^{-i x(\lambda^2-\lambda^{-2})/4} \end{array} \right)
= \left( \begin{array}{cc} \alpha(\lambda) & \beta(\lambda) \\
\gamma(\lambda) & \delta(\lambda) \end{array} \right)
\left( \begin{array}{c} \varphi_{+}(x;\lambda)e^{i x(\lambda^2-\lambda^{-2})/4} \\
\phi_{+}(x;\lambda)e^{-i x(\lambda^2-\lambda^{-2})/4} \end{array} \right).
    \end{eqnarray}
Since the operator $L$ in (\ref{e def L}) admits the symmetry
\begin{equation*}
  \overline{\phi_{\pm}(x;\lambda)}=\pm
  \left(
    \begin{array}{cc}
      0 & -1 \\
      1 & 0 \\
    \end{array}
  \right) \varphi_{\pm}\left(x;\overline{\lambda}\right),
\end{equation*}
we obtain
\begin{equation}
\label{alpha-beta-relations}
  \gamma(\lambda)=-\overline{\beta(\overline{\lambda})}, \quad \delta(\lambda)=\overline{\alpha(\overline{\lambda})},
  \quad \lambda\in(\R\cup i\R)\setminus\{0\}.
\end{equation}

Since the matrix operator $L$ in (\ref{e def L}) has zero trace,
the Wronskian determinant $W$ of any two solutions to the spectral problem (\ref{L-spectral})
for any $\lambda \in \mathbb{C}$ is independent of $x$. By
computing the Wronskian determinants of the solutions $\{ \varphi_-,\phi_+ \}$
and $\{ \varphi_+,\varphi_-\}$ as $x \to +\infty$ and
using the scattering relation (\ref{e scattering relation}) and
the asymptotic behavior of the Jost functions $\{ \varphi_{\pm},\psi_{\pm}\}$, we obtain
\begin{equation}\label{e wronskian for alpha and beta}
\left\{ \begin{array}{l}
    \alpha(\lambda) = W\left(\varphi_-(x;\lambda)e^{i x(\lambda^2-\lambda^{-2})/4}, \phi_+(x;\lambda) e^{-i x(\lambda^2-\lambda^{-2})/4}\right),\\
    \beta(\lambda) = W\left(\varphi_{+}(x;\lambda)e^{i x(\lambda^2-\lambda^{-2})/4},\varphi_{-}(x;\lambda)e^{i x(\lambda^2-\lambda^{-2})/4}\right).
  \end{array} \right.
\end{equation}
It follows from the asymptotic behavior of $\{ \varphi_-,\phi_-\}$ as $x \to -\infty$
that $W(\varphi_-,\phi_-) = 1$. Substituting (\ref{e scattering relation})
and using the the asymptotic behavior of $\{ \varphi_+,\phi_+ \}$ as $x \to +\infty$
yield the following constraint on the scattering data:
\begin{equation}
\label{data-constraint}
\alpha(\lambda) \delta(\lambda) - \beta(\lambda) \gamma(\lambda) = 1, \quad \lambda \in (\R\cup i\R)\setminus\{0\}.
\end{equation}
In view of the constraints (\ref{alpha-beta-relations}), the constraint (\ref{data-constraint})
can be written as
\begin{equation}
\label{data-constraint-new}
\alpha(\lambda) \overline{\alpha(\overline{\lambda})} +
\beta(\lambda) \overline{\beta(\overline{\lambda})} = 1, \quad \lambda \in (\R\cup i\R)\setminus\{0\}.
\end{equation}

By using the transformation formulas (\ref{e m n trafo}) we can rewrite
the scattering relation (\ref{e scattering relation}) in terms of
the transformed Jost functions $\{m_{\pm},n_{\pm}\}$.
In particular, we apply $T(u;\lambda)$ to the first equation in (\ref{e scattering relation})
and $\lambda T(u;\lambda)$ to the second equation in (\ref{e scattering relation}), so that
we obtain for $z \in \mathbb{R} \backslash \{0\}$,
\begin{eqnarray}
\label{e scattering relation m n}
\left( \begin{array}{c}
         m_{-}(x;z)e^{i x(z-z^{-1})/4} \\ n_{-}(x;z)e^{-i x(z-z^{-1})/4}
                  \end{array} \right) = \left( \begin{array}{cc} a(z) & b_+(z) \\
         -\overline{b_-(z)} & \overline{a(z)}  \end{array} \right)
         \left( \begin{array}{c}  m_{+}(x;z) e^{i x(z-z^{-1})/4} \\ n_{+}(x;z)e^{-i x(z-z^{-1})/4}\end{array} \right),
    \end{eqnarray}
where we have recalled $z = \lambda^2$ and defined the scattering coefficients:
\begin{equation}\label{e def a b+ b-}
  a(z) := \alpha(\lambda),\quad
  b_+(z) := \lambda^{-1} \beta(\lambda),\quad
  b_-(z) := \lambda \beta(\lambda),\quad z \in \mathbb{R} \backslash\{0\}.
\end{equation}
Since $m_{\pm}(x;z)$ and $n_{\pm}(x;z)$ depend on $z = \lambda^2$,
we deduce that $\alpha$ is even in $\lambda$ and $\beta$ is odd in $\lambda$ for $\lambda \in (\R\cup i\R)\setminus\{0\}$.
The latter condition yields $\overline{\lambda} \beta(\overline{\lambda}) = \lambda \beta(\lambda)$,
which has been used already in the expression (\ref{e def a b+ b-}) for $b_-(z)$.
Thanks to the relation (\ref{data-constraint-new}),
we have the following constraints
\begin{equation}
\label{scattering-relation-modified}
\left\{ \begin{array}{l}
|\alpha(\lambda)|^2 + |\beta(\lambda)|^2 = 1, \quad \lambda \in \mathbb{R} \backslash \{0\}, \\
|\alpha(\lambda)|^2 - |\beta(\lambda)|^2 = 1, \quad \lambda \in i\mathbb{R} \backslash \{0\}.
\end{array} \right.
\end{equation}

Since the matrix operator $\mathcal{L}$ in (\ref{L-expansion1}) has zero trace,
the Wronskian determinant $W$ of any two solutions to the spectral problem (\ref{e Lax1 Psi})
is also independent of $x$. As a result, by
computing the Wronskian determinant as $x \to +\infty$ and
using the asymptotic behavior of the Jost functions $\{ m_{\pm},n_{\pm}\}$,
we obtain from the scattering relation (\ref{e scattering relation m n})
for $z \in \mathbb{R} \backslash \{0\}$:
\begin{equation}\label{e wronskian for a b+ b-}
\left\{ \begin{array}{l} a(z) = W\left(m_{-}(x;z)e^{i x(z-z^{-1})/4},n_{+}(x;z)e^{-i x(z-z^{-1})/4}\right), \\
    b_+(z) = W\left(m_{+}(x;z)e^{i x(z-z^{-1})/4}, m_{-}(x;z)e^{i x(z-z^{-1})/4}\right),\\
    \overline{b_-(z)} = W\left(n_+(x;z)e^{-i x(z-z^{-1})/4}, n_-(x;z)e^{-i x(z-z^{-1})/4}\right),
\end{array} \right.
\end{equation}
in accordance with the representation (\ref{e wronskian for alpha and beta}).

Analogously, by using the transformation formulas (\ref{e m n hat trafo}) we can rewrite
the scattering relation (\ref{e scattering relation}) in terms of
the transformed Jost functions $\{\widehat{m}_{\pm},\widehat{n}_{\pm}\}$.
In particular, we apply $\widehat{T}(u;\lambda)$ to the first equation in (\ref{e scattering relation})
and $\lambda^{-1}\widehat{T}(u;\lambda)$ to the second equation in (\ref{e scattering relation}),
so that we obtain for $z\in\R  \backslash \{0\}$,
\begin{eqnarray}
\label{e scattering relation m n hat}
\left( \begin{array}{c}
         \widehat{m}_{-}(x;z)e^{i x(z-z^{-1})/4} \\
             \widehat{n}_{-}(x;z)e^{-i x(z-z^{-1})/4} \end{array} \right)
         = \left( \begin{array}{cc} \widehat{a}(z) & \widehat{b}_+(z) \\ - \overline{\widehat{b}_-(z)} & \overline{\widehat{a}(z)} \end{array} \right)
         \left( \begin{array}{c} \widehat{m}_{+}(x;z)e^{i x(z-z^{-1})/4} \\ \widehat{n}_{+}(x;z)e^{-i x(z-z^{-1})/4} \end{array} \right),
    \end{eqnarray}
where we have recalled $z = \lambda^2$ and defined the scattering coefficients
\begin{equation}\label{e def a b+ b- hat}
  \widehat{a}(z):=\alpha(\lambda),\quad
  \widehat{b}_+(z):=\lambda \beta(\lambda),\quad
  \widehat{b}_-(z):= \lambda^{-1} \beta(\lambda),\quad z\in\R \backslash \{0\}.
\end{equation}
Since the matrix operator $\widehat{\mathcal{L}}$ in (\ref{L-expansion2}) has zero trace,
we obtain from the scattering relation (\ref{e scattering relation m n hat}) for $z \in \mathbb{R} \backslash \{0\}$:
\begin{equation}\label{e wronskian for a b+ b- hat}
\left\{ \begin{array}{l} \widehat{a}(z) = W\left(\widehat{m}_{-}(x;z)e^{i x(z-z^{-1})/4},\widehat{n}_{+}(x;z)e^{-i x(z-z^{-1})/4}\right), \\
    \widehat{b}_+(z) = W\left(\widehat{m}_{+}(x;z)e^{i x(z-z^{-1})/4}, \widehat{m}_{-}(x;z)e^{i x(z-z^{-1})/4}\right),\\
    \overline{\widehat{b}_-(z)} = W\left(\widehat{n}_+(x;z)e^{-i x(z-z^{-1})/4}, \widehat{n}_-(x;z)e^{-i x(z-z^{-1})/4}\right),
\end{array} \right.
\end{equation}
in accordance with the representation (\ref{e wronskian for alpha and beta}).

It follows from (\ref{e def a b+ b-}) and (\ref{e def a b+ b- hat}) that the two sets of scattering data
are actually related by
\begin{equation}
\label{a-b-relations}
\widehat{a}(z) = a(z), \quad \widehat{b}_+(z) = b_-(z), \quad \widehat{b}_-(z) = b_+(z), \quad z \in \mathbb{R} \backslash \{0\}.
\end{equation}
These relations are in agreement with the continuation formulas (\ref{e contuniation formulas}) and (\ref{e contuniation formulas backward}).
By using the representations (\ref{e wronskian for a b+ b-}) and (\ref{e wronskian for a b+ b- hat}), as well as
Lemma \ref{l expansion m n}, \ref{l expansion m n hat}, and \ref{l analytic continuation}, we obtain the following.

\begin{lem}\label{l scattering coeff}
  Let $(u,v)\in L^1(\R)\cap L^{\infty}(\R)$ and $(u_x,v_x) \in L^1(\R)$. Then, $a = \widehat{a}$ is continued analytically into $\C^-$ with the
  following limits in $\C^-$:
        \begin{equation}
        \label{a-infty}
        \lim_{z \to 0} a(z) = e^{-\frac{i}{4} \int_{\mathbb{R}}(|u|^2 + |v|^2) dy} =:  a_0
        \end{equation}
        and
        \begin{equation}
        \label{a-hat-infty}
        \lim_{|z| \to\infty} a(z) = e^{\frac{i}{4}\int_{\mathbb{R}}(|u|^2 + |v|^2) dy} =:  a_{\infty}.
        \end{equation}
On the other hand, $b_{\pm} = \widehat{b}_{\pm}$ are not continued analytically beyond the real line and
satisfy the following limits on $\mathbb{R}$:
  \begin{equation}
  \label{b-infty}
  \lim_{z \to 0} b_{\pm}(z) = \lim_{|z| \to \infty} b_{\pm}(z) = 0.
  \end{equation}
\end{lem}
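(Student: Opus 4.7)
The plan is to read off both the analytic continuation and the limits directly from the Wronskian representations (\ref{e wronskian for a b+ b-}) and (\ref{e wronskian for a b+ b- hat}), exploiting the fact that the two columns in the Wronskian for $a$ carry reciprocal phase factors $e^{\pm ix(z-z^{-1})/4}$ which cancel. Expanding the determinant gives the phase-free bilinear expression
\[
a(z) = m_-^{(1)}(x;z)\,n_+^{(2)}(x;z) - m_-^{(2)}(x;z)\,n_+^{(1)}(x;z),
\]
and analogously $\widehat{a}(z) = \widehat{m}_-^{(1)}\widehat{n}_+^{(2)} - \widehat{m}_-^{(2)}\widehat{n}_+^{(1)}$. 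By Lemma \ref{l solvability m n}, the first expression is analytic in $\mathbb{C}^-\cap B_0$; by Lemma \ref{l solvability m n hat}, the second is analytic in $\mathbb{C}^-\cap B_\infty$. Combining $a=\widehat{a}$ on $\mathbb{R}\setminus\{0\}$ from (\ref{a-b-relations}) with the cross-circle continuation provided by Lemma \ref{l analytic continuation} glues the two pieces into a single function analytic on all of $\mathbb{C}^-$.

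For the limits of $a$, I would pass to the limit inside the bilinear expression. By Lemma \ref{l expansion m n}, $m_-(x;z) \to m_-^\infty(x)\,e_1$ and $n_+(x;z) \to n_+^\infty(x)\,e_2$ as $z \to 0$, so
\[
\lim_{z \to 0} a(z) \;=\; m_-^\infty(x)\, n_+^\infty(x) \;=\; \exp\!\left( -\tfrac{i}{4}\int_{\mathbb{R}}(|u|^2+|v|^2)\,dy \right) \;=\; a_0,
\]
the $x$-dependence dropping out because $\int_{-\infty}^{x} - \int_{+\infty}^{x} = -\int_{\mathbb{R}}$. The parallel computation for $\widehat{a}$ via Lemma \ref{l expansion m n hat} gives $\lim_{|z|\to\infty}\widehat{a}(z) = a_\infty$.

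For $b_\pm$, the two columns in the Wronskian (\ref{e wronskian for a b+ b-}) now carry the \emph{same} phase, so
\[
b_+(z) \;=\; e^{i x(z-z^{-1})/2}\bigl[\, m_+^{(1)}(x;z)\,m_-^{(2)}(x;z) - m_+^{(2)}(x;z)\,m_-^{(1)}(x;z) \,\bigr],
\]
and the prefactor has modulus one on $\mathbb{R}\setminus\{0\}$. As $z\to 0$, Lemma \ref{l expansion m n} forces the second components $m_\pm^{(2)}(x;z)$ to vanish while $m_\pm^{(1)}(x;z)\to m_\pm^\infty(x)$, so the bracket tends to zero. The hat representation combined with Lemma \ref{l expansion m n hat} yields the analogous conclusion as $|z|\to\infty$, and $b_-$ is handled identically via the $n_\pm$ Wronskian. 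The one delicate point will be verifying that the two bilinear representations of $a$ on $\mathbb{C}^-\cap B_0$ and $\mathbb{C}^-\cap B_\infty$ indeed glue analytically across $\mathbb{S}^1\cap\mathbb{C}^-$; this reduces to a short algebraic check using (\ref{e contuniation formulas})--(\ref{e contuniation formulas backward}), where the off-diagonal contributions involving $u$, $v$, and $z^{\pm 1}$ cancel precisely because of the specific form of the connection matrices.
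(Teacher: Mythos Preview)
Your proposal is correct and follows exactly the approach the paper indicates: the paper's proof is the single sentence ``By using the representations (\ref{e wronskian for a b+ b-}) and (\ref{e wronskian for a b+ b- hat}), as well as Lemmas \ref{l expansion m n}, \ref{l expansion m n hat}, and \ref{l analytic continuation}, we obtain the following,'' and you have simply unpacked those ingredients. One trivial slip: your parenthetical should read $\int_{-\infty}^{x} - \int_{+\infty}^{x} = \int_{\mathbb{R}}$ (not $-\int_{\mathbb{R}}$), though your final value $a_0$ is correct since the sign is already carried by the $-\tfrac{i}{4}$ in the exponent of $m_-^\infty$.
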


To simplify the inverse scattering transform, we consider the case of no eigenvalues or resonances
in the spectral problem (\ref{L-spectral}) in the sense of the following definition.

\begin{mydef}
\label{def-eigenvalues}
We say that the potential $(u,v)$ admits an eigenvalue at $z_0 \in \mathbb{C}^-$ if $a(z_0) = 0$
and a resonance at $z_0 \in \R$ if $a(z_0) = 0$.
\end{mydef}

By taking the limit $x \to +\infty$ in the Volterra integral equations (\ref{e volterra m}) and (\ref{e volterra m hat})
for $m_-$ and $\widehat{m}_-$ respectively and comparing it with the first equations in the scattering
relations (\ref{e scattering relation m n}) and (\ref{e scattering relation m n hat}), we obtain
the following equivalent representations for $a = \widehat{a}$:
\begin{subequations}\label{a integral}
\begin{eqnarray}
\label{a-zero-integral}
& \phantom{text} & a(z) =  1 - \frac{i}{4} \int_{\mathbb{R}} \left[ (|u|^2 + |v|^2) m_-^{(1)} - 2 \bar{u} m_-^{(2)}
- 2z \bar{v} (u m_-^{(1)} - m_-^{(2)}) \right] dx, \;\; z \in B_0 \cap \C^-, \\
\label{a-infty-integral}
& \phantom{text} & a(z) =  1 + \frac{i}{4} \int_{\mathbb{R}} \left[ (|u|^2 + |v|^2) \widehat{m}_-^{(1)} - 2 \bar{v} \widehat{m}_-^{(2)}
- 2z^{-1} \bar{u} (v \widehat{m}_-^{(1)} - \widehat{m}_-^{(2)}) \right] dx,  z \in B_{\infty} \cap \C^-,
\end{eqnarray}
\end{subequations}
where the superscripts denote components of the Jost functions. If $(u,v) \in H^{1,1}(\R)$ are defined
in the ball of radius $\delta$ for some $\delta \in (0,1)$, then constants $C$ in (\ref{e global bound m n})
and (\ref{e global bound m n hat}) are independent of $\delta$. Then, it follows from (\ref{a integral})
that if $\delta$ is sufficiently small, then the integrals can be made as small as needed for every $z \in \C^- \cup \R$.
This implies the following.

\begin{lem}\label{lem-small-data}
  Let $(u,v)\in L^1(\R)\cap L^{\infty}(\R)$ and $(u_x,v_x) \in L^1(\R)$ be sufficiently small. Then $(u,v)$ does not admit
  eigenvalues or resonances in the sense of Definition \ref{def-eigenvalues}.
\end{lem}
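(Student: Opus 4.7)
The plan is to exploit the integral representations (\ref{a integral}) for $a(z)$ together with the uniform Jost-function bounds (\ref{e global bound m n}) and (\ref{e global bound m n hat}) to show that $|a(z) - 1| < 1$ on the whole of $\overline{\C^-}$ when the data are sufficiently small. Since $a = \widehat{a}$ is analytic in $\C^-$ and continuous up to $\R$ by Lemma \ref{l scattering coeff}, this immediately rules out both eigenvalues in $\C^-$ and resonances on $\R$ in the sense of Definition \ref{def-eigenvalues}.

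The first step is to revisit the Neumann-series proofs of Lemmas \ref{l solvability m n} and \ref{l solvability m n hat}. Setting $\delta := \|u\|_{L^1 \cap L^\infty} + \|v\|_{L^1 \cap L^\infty} + \|u_x\|_{L^1} + \|v_x\|_{L^1}$, I would observe that on the respective domains $z \in B_0 \cap \overline{\C^-}$ and $z \in B_\infty \cap \overline{\C^-}$, both the oscillatory kernel and the correction term $z Q_2$ (resp.\ $z^{-1} \widehat{Q}_2$) are uniformly bounded, so the standard Volterra iteration yields, uniformly in $z$, bounds of the form $\|m_-(\cdot;z) - e_1\|_{L^\infty} + \|\widehat{m}_-(\cdot;z) - e_1\|_{L^\infty} \leq C \delta$ with $C$ independent of $\delta$. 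In particular $\|m_-\|_{L^\infty}$ and $\|\widehat{m}_-\|_{L^\infty}$ are bounded by $2$ once $\delta$ is small enough, and the second components $m_-^{(2)}$ and $\widehat{m}_-^{(2)}$ are themselves of size $O(\delta)$ in $L^\infty$.

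The second step is to insert these estimates into (\ref{a-zero-integral}) and (\ref{a-infty-integral}) and to estimate by H\"older. Because $|z| < 1$ in the first representation and $|z^{-1}| < 1$ in the second, the factors $z \bar v$ and $z^{-1} \bar u$ contribute nothing extra. Each summand under the integrals is of the form $f \cdot g$ with $f$ a product of two components of $(u,v)$ and $g$ a bounded Jost component, or of the form $\bar{u} m_-^{(2)}$ (resp.\ $\bar{v} \widehat{m}_-^{(2)}$) where the Jost component is itself $O(\delta)$ in $L^\infty$. Using $\|uv\|_{L^1} \leq \|u\|_{L^\infty} \|v\|_{L^1}$ and $\|u^2\|_{L^1} \leq \|u\|_{L^\infty} \|u\|_{L^1}$, every contribution is bounded by $C' \delta^2$, giving $|a(z) - 1| \leq C' \delta^2$ uniformly on $\overline{\C^-}$. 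Choosing $\delta$ so that $C' \delta^2 < 1$ then forces $|a(z)| > 0$ throughout $\C^- \cup \R$, which is the desired conclusion.

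The main obstacle is really only bookkeeping: one must verify that the constants coming from the Neumann iteration are genuinely independent of $z$ as $z$ approaches $\mathbb{S}^1$ from either side, which is exactly what the splitting into $B_0$ and $B_\infty$ is designed to guarantee, and one must notice that every summand in (\ref{a integral}) is at worst quadratic in the data once the smallness of $m_-^{(2)}$ and $\widehat{m}_-^{(2)}$ is used. No additional structural ingredient beyond Lemmas \ref{l solvability m n}--\ref{l scattering coeff} is needed.
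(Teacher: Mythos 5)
Your proposal is correct and follows essentially the same route as the paper: both use the integral representations (\ref{a integral}) for $a=\widehat{a}$ on $B_0\cap\overline{\C^-}$ and $B_\infty\cap\overline{\C^-}$ together with the $\delta$-independent Jost-function bounds (\ref{e global bound m n}), (\ref{e global bound m n hat}) to make $|a(z)-1|$ uniformly small, so that $a$ has no zeros in $\C^-\cup\R$ and hence no eigenvalues or resonances. Your additional bookkeeping (the Neumann-series smallness of $m_-^{(2)}$, $\widehat{m}_-^{(2)}$ and the resulting quadratic bound) is a harmless refinement of the paper's shorter smallness argument.
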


\begin{remark}
The result of Lemma \ref{lem-small-data} was first obtained in Theorem 6.1 in \cite{Pel-survey}.
No transformation of the spectral problem (\ref{L-spectral}) was employed in \cite{Pel-survey}.
Transformations similar to those we are using here were employed later in \cite{Pelinovsky-Shimabukuro}
in the context of the derivative NLS equation.
\end{remark}

\begin{remark}
The result of Lemma \ref{lem-small-data} is useful for the study of long-range scattering
from small initial data. Eigenvalues can always be included
by using B\"{a}cklund transformation for the MTM system \cite{CPS,PSS}.
Resonances are structurally unstable and can be removed by perturbations of initial data \cite{Beals1984,PSS}.
\end{remark}

\section{Riemann--Hilbert problems}
\label{sec-RH}

We will derive two Riemann--Hilbert problems. The first problem is formulated for the transformed Jost functions
$\{m_{\pm},n_{\pm}\}$, whereas the second problem is formulated for the transformed Jost functions
$\{\widehat{m}_{\pm},\widehat{n}_{\pm}\}$. Thanks to the asymptotic representations
(\ref{e expansion m n-longer}) and (\ref{e expansion m n-longer hat}),
the first problem is useful for reconstruction
of the component $u$ as $z \to 0$, whereas the second problem is useful for reconstruction of the component $v$
as $|z| \to \infty$, both components satisfy the MTM system (\ref{e mtm}).
This pioneering idea has first appeared on a formal level in \cite{V}.
The following assumption is used to simplify solutions to the Riemann--Hilbert problems.

\begin{assumption}
\label{assumption-a}
Assume that the scattering coefficient $a$ admits no zeros in $\mathbb{C}^-\cup \mathbb{R}$.
\end{assumption}

Assumption \ref{assumption-a} corresponds to the initial data $(u_0,v_0)$ which admit
no eigenvalues or resonances in the sense of Definition \ref{def-eigenvalues}. By Lemma \ref{lem-small-data},
the assumption is satisfied if the $H^{1,1}(\R)$ norm on the initial data
is sufficiently small. Since $a$ is continued analytically into $\mathbb{C}^-$ by Lemma \ref{l scattering coeff}
with nonzero limits (\ref{a-infty}) and (\ref{a-hat-infty}), zeros of $a$ lie in a compact set.
Therefore, if $a$ admits no zeros in $\mathbb{C}^- \cup \mathbb{R}$ by Assumption \ref{assumption-a},
then there is $A > 0$ such that $|a(z)| \geq A$ for every $z \in \mathbb{R}$.

\subsection{Riemann-Hilbert problem for the potential $u$}

The asymptotic limit (\ref{limits-m-n})
presents a challenge to use $\{m_{\pm},n_{\pm}\}$ for reconstruction
of $(u,v)$ as $|z| \to \infty$. On the other hand, the reconstruction
formula for $(u,v)$ in terms of $\{m_{\pm},n_{\pm}\}$ is available from the asymptotic limit
(\ref{e expansion m n-longer}) as $z \to 0$. In order to avoid this complication, we use the inversion transformation
$\omega = 1/z$, which maps $0$ to $\infty$ and vice versa. The analyticity regions
swap under the inversion transformation so that $\{ m_-,n_+\}$ become analytic in
$\C^+$ for $\omega$ and $\{ m_+, n_-\}$ become analytic in $\C^-$ for $\omega$.

Let us define matrices $P_{\pm}(x;\omega) \in\C^{2\times 2}$ for every $x \in \mathbb{R}$ and $\omega \in \R$ by
\begin{equation}
  P_+(x;\omega) := \left[\frac{m_{-}(x;\omega^{-1})}{a(\omega^{-1})}, n_{+}(x;\omega^{-1})\right],\quad
  P_-(x;\omega) :=\left[m_{+}(x;\omega^{-1}),\frac{n_{-}(x;\omega^{-1})} {\overline{a(\omega^{-1})}} \right],
\end{equation}
and two reflection coefficients
\begin{equation}\label{e def r+-}
  r_{\pm}(\omega)= \frac{b_{\pm}(\omega^{-1})}{a(\omega^{-1})},\quad \omega \in \R,
\end{equation}
The scattering relation (\ref{e scattering relation m n}) can be rewritten as
the following jump condition for  the Riemann--Hilbert problem:
\begin{equation*}
   P_+(x;\omega) = P_-(x;\omega)
  \left[
    \begin{array}{cc}
      1+r_+(\omega) \overline{r_-(\omega)}  & \overline{r_-(\omega)}e^{-\frac{i}{2} (\omega-\omega^{-1})x} \\
      r_+(\omega) e^{\frac{i}{2}(\omega-\omega^{-1}) x} & 1 \\
    \end{array}
  \right]
\end{equation*}

If the scattering coefficient $a$ satisfies Assumption \ref{assumption-a}, then
$P_{\pm}(x;\cdot)$ for every $x \in \mathbb{R}$ are continued analytically in $\C^{\pm}$ by
Lemmas \ref{l analytic continuation} and \ref{l scattering coeff}.
We denote these continuations by the same letters. Asymptotic limits
(\ref{e expansion m n}) and (\ref{a-infty}) yield
the following behavior of $P_{\pm}(x;\omega)$ for large $|\omega|$
in the domains of their analyticity:
\begin{equation*}
  P_{\pm}(x;\omega)\to
  \left[
    \begin{array}{cc}
      m_{+}^{\infty}(x) & 0 \\
      0 & n_{+}^{\infty}(x) \\
    \end{array}
  \right] =: P^{\infty}(x)\quad\text{as }|\omega|\to\infty.
\end{equation*}
Since we prefer to work with $x$-independent boundary conditions, we normalize
the boundary conditions by defining
\begin{equation}\label{e def M}
  M_{\pm}(x;\omega) := \left[ P^{\infty}(x) \right]^{-1} P_{\pm}(x;\omega), \quad \omega \in \C^{\pm}.
\end{equation}
The following Riemann-Hilbert problem is formulated for the function $M(x;\cdot)$.
\begin{samepage}
\begin{framed}
\begin{rhp}\label{rhp M stat}
For each $x\in\R$, find a $2\times 2$-matrix valued function $M(x;\cdot)$ such that
\begin{enumerate}
  \item $M(x;\cdot)$ is piecewise analytic in $\C\setminus\R$ with continuous boundary values
  \begin{equation*}
  M_{\pm}(x;\omega)=\lim_{\varepsilon\downarrow 0} M(x;\omega \pm i\varepsilon), \quad z\in\mathbb{R}.
  \end{equation*}
  \item $M(x;\omega) \to I$ as $|\omega| \to \infty$.
  \item The boundary values $M_{\pm}(x;\cdot)$ on $\R$ satisfy the jump relation
      \begin{equation*}
        M_+(x;\omega) - M_-(x;\omega) = M_-(x;\omega) R(x;\omega), \quad \omega \in \R,
      \end{equation*}
      where
      \begin{equation*}
        R(x;\omega):=
        \left[
         \begin{array}{cc}
           r_+(\omega) \overline{r_-(\omega)}  & \overline{r_-(\omega)} e^{-\frac{i}{2} (\omega-\omega^{-1})x} \\
          r_+(\omega)e^{\frac{i}{2}(\omega-\omega^{-1}) x} & 0 \\
         \end{array}
        \right].
        \end{equation*}
\end{enumerate}
\end{rhp}
\end{framed}
\end{samepage}

It follows from the asymptotic limits (\ref{e expansion m n-longer}) and the normalization
(\ref{e def M}) that the components $(u,v)$ of the MTM system (\ref{e mtm}) are related to
the solution of the Riemann--Hilbert problem \ref{rhp M stat} by using the following reconstruction formulas:
\begin{equation}\label{e rec u 1}
\left[ 2 i u'(x) + u(x)|v(x)|^2 + v(x)\right] e^{\frac{i}{2} \int_{x}^{+\infty} (|u|^2+|v|^2) dy}
= \lim_{|\omega| \to \infty} \omega [M(x;\omega)]_{21}
\end{equation}
and
\begin{equation}\label{e rec u 2}
 \overline{u}(x)e^{-\frac{i}{2} \int_{x}^{+\infty} (|u|^2+|v|^2) dy} =\lim_{|\omega| \to \infty} \omega [M(x;\omega)]_{12},
\end{equation}
where the subscript denotes the element of the $2 \times 2$ matrix $M$.

\begin{remark}
The gauge factors in (\ref{e rec u 1})--(\ref{e rec u 2}) appear because of the normalization (\ref{e def M})
and the asymptotic limits (\ref{e expansion m n-longer}). A different approach was utilized in \cite{Perry1,Perry}
to avoid these gauge factors. The inverse scattering transform was developed to a different spectral problem,
which was obtained from the Kaup--Newell spectral problem after a gauge transformation.
\end{remark}

\subsection{Riemann-Hilbert problem for the potential $v$}

Let us define matrices $\widehat{P}_{\pm}(x;z) \in\C^{2\times 2}$ for every $x \in \R$ and $z \in \R$ by
\begin{equation}
  \widehat{P}_+(x;z):= \left[ \widehat{m}_{+}(x;z), \frac{\widehat{n}_{-}(x;z)} {\overline{\widehat{a}(z)}}\right],\quad
  \widehat{P}_-(x;z):= \left[\frac{\widehat{m}_{-}(x;z)}{\widehat{a}(z)}, \widehat{n}_{+}(x;z)\right],
\end{equation}
and two reflection coefficients by
\begin{equation}\label{e def r+- hat}
  \widehat{r}_{\pm}(z) = \frac{\widehat{b}_{\pm}(z)}{\widehat{a}(z)} = \frac{b_{\mp}(z)}{a(z)}, \quad z\in \R,
\end{equation}
where the relations (\ref{a-b-relations}) have been used.
The scattering relation (\ref{e scattering relation m n hat}) can be rewritten as
the following jump condition for  the Riemann--Hilbert problem:
\begin{equation*}
   \widehat{P}_+(x;z) = \widehat{P}_-(x;z)
  \left[
    \begin{array}{cc}
      1 & -\overline{\widehat{r}_-(z)} e^{\frac{i}{2} (z-z^{-1})x} \\
      -\widehat{r}_+(z)e^{-\frac{i}{2}(z-z^{-1}) x} & 1+ \widehat{r}_+(z) \overline{\widehat{r}_-(z)} \\
    \end{array}
  \right]
\end{equation*}

If the scattering coefficient $a$ satisfies Assumption \ref{assumption-a}, then
$\widehat{P}_{\pm}(x;\cdot)$  for every $x \in \mathbb{R}$ are continued analytically in $\C^{\pm}$ by
Lemmas \ref{l analytic continuation} and \ref{l scattering coeff}.
We denote these continuations by the same letters. Asymptotic limits (\ref{e expansion m n hat})
and (\ref{a-hat-infty}) yield the following behavior of $\widehat{P}(x;z)$ for large $|z|$
in the domains of their analyticity:
\begin{equation*}
  \widehat{P}_{\pm}(x;z) \to
  \left[
    \begin{array}{cc}
      \widehat{m}_{+}^{\infty}(x) & 0 \\
      0 & \widehat{n}_{+}^{\infty}(x) \\
    \end{array}
  \right] =: \widehat{P}^{\infty}(x),\quad\text{as }|z|\to\infty.
\end{equation*}
In order to normalize the boundary conditions, we define
\begin{equation}\label{e def M hat}
  \widehat{M}_{\pm}(x;z):= \left[ \widehat{P}^{\infty}(x) \right]^{-1} \widehat{P}_{\pm}(x;z), \quad z\in\C^{\pm}.
\end{equation}
The following Riemann-Hilbert problem is formulated for the function $\widehat{M}(x;\cdot)$.
\begin{samepage}
\begin{framed}
\begin{rhp}\label{rhp M stat hat}
For each $x\in\R$, find a $2\times 2$-matrix valued function $\widehat{M}(x;\cdot)$ such that
\begin{enumerate}
  \item $\widehat{M}(x;\cdot)$ is piecewise analytic in $\C\setminus\R$ with continuous boundary values
  \begin{equation*}
  \widehat{M}_{\pm}(x;z)=\lim_{\varepsilon\downarrow 0} \widehat{M}(x;z\pm i\varepsilon), \quad z\in\mathbb{R}.
  \end{equation*}
  \item $\widehat{M}(x;z) \to I$ as $|z|\to\infty$.
  \item The boundary values $\widehat{M}_{\pm}(x;\cdot)$ on $\R$ satisfy the jump relation
      \begin{equation*}
        \widehat{M}_+(x;z) - \widehat{M}_-(x;z) = \widehat{M}_-(x;z) \widehat{R}(x;z),
        \end{equation*}
        where
        \begin{equation*}
        \widehat{R}(x;z):=
        \left[
          \begin{array}{cc}
             0  & -\overline{\widehat{r}_-(z)}e^{\frac{i}{2} (z-z^{-1})x} \\
             -\widehat{r}_+(z)e^{-\frac{i}{2}(z-z^{-1}) x} & \widehat{r}_+(z) \overline{\widehat{r}_-(z)} \\
          \end{array}
        \right].
      \end{equation*}
\end{enumerate}
\end{rhp}
\end{framed}
\end{samepage}

It follows  from the asymptotic limit (\ref{e expansion m n-longer hat}) and the normalization
(\ref{e def M hat}) that the components $(u,v)$ of the MTM system (\ref{e mtm}) can be recovered
from the solution of the Riemann--Hilbert problem \ref{rhp M stat hat} by using the following reconstruction formulas:
\begin{equation}\label{e rec v 1}
  \left[ - 2i v'(x) + |u(x)|^2 v(x) + u(x)\right] e^{-\frac{i}{2} \int_{x}^{+\infty} (|u|^2+|v|^2) dy} =
  \lim_{|z|\to\infty} z \left[\widehat{M}(x;z)\right]_{21}
\end{equation}
and
\begin{equation}\label{e rec v 2}
 \overline{v}(x)e^{\frac{i}{2} \int_{x}^{+\infty} (|u|^2+|v|^2) dy} =\lim_{|z|\to\infty} z \left[\widehat{M}(x;z)\right]_{12},
\end{equation}
where the subscript denotes the element of the $2 \times 2$ matrix $M$.

Let us now outline the reconstruction procedure for $(u,v)$ as a solution of the MTM system (\ref{e mtm})
in the inverse scattering transform. If the right-hand sides of (\ref{e rec u 2}) and (\ref{e rec v 2}) are controlled
in the space $H^1(\R) \cap L^{2,1}(\R)$, then $(\tilde{u},\tilde{v}) \in H^1(\R) \cap L^{2,1}(\R)$, where
$$
\tilde{u}(x) = u(x)e^{\frac{i}{2} \int_{x}^{+\infty} (|u|^2+|v|^2) dy}, \quad
\tilde{v}(x) = v(x)e^{-\frac{i}{2} \int_{x}^{+\infty} (|u|^2+|v|^2) dy}.
$$
Since $|\tilde{u}(x)| = |u(x)|$ and $|\tilde{v}(x)| = |v(x)|$, the gauge factors can be immediately inverted,
and since $H^1(\mathbb{R})$ is continuously embedded into $L^p(\mathbb{R})$ for any $p \geq 2$, we then have
$(u,v)  \in H^1(\R) \cap L^{2,1}(\R)$. If the right-hand sides of (\ref{e rec u 1}) and (\ref{e rec v 1}) are
also controlled in $H^1(\R) \cap L^{2,1}(\R)$, then similar arguments give $(u',v') \in H^1(\R) \cap L^{2,1}(\R)$,
that is, $(u,v) \in H^2(\R) \cap H^{1,1}(\R)$, in agreement with the function space used for direct scattering transform.

\begin{remark}
It follows from the limit (\ref{b-infty}) that $R(x;0) = \widehat{R}(x;0) = 0$
implying $M_+(x;0) = M_-(x;0)$ and $\widehat{M}_+(x;0) = \widehat{M}_-(x;0)$.
More precisely, using (\ref{limits-m-n}), (\ref{limits-m-n-hats}), (\ref{a-infty}),
(\ref{a-hat-infty}), and  $\omega = z^{-1}$ we can derive
\begin{equation*}
M(x;0) =  \left[
      \begin{array}{cc}
        m_+^{\infty}(x) & 0 \\
        0 & n_+^{\infty}(x) \\
      \end{array}
    \right]^{-1}
    \left[
      \begin{array}{cc}
        1 & \overline{v}(x) \\
        u(x) & 1+u(x)\overline{v}(x) \\
      \end{array}
    \right]
    \left[
      \begin{array}{cc}
        \widehat{m}_+^{\infty}(x) & 0 \\
        0 & \widehat{n}_+^{\infty}(x) \\
      \end{array}
    \right]
\end{equation*}
and
\begin{equation*}
        \widehat{M}(x;0) =  \left[
      \begin{array}{cc}
        \widehat{m}_+^{\infty}(x) & 0 \\
        0 & \widehat{n}_+^{\infty}(x) \\
      \end{array}
    \right]^{-1}
    \left[
      \begin{array}{cc}
        1 & \overline{u}(x) \\
        v(x) & 1+\overline{u}(x) v(x) \\
      \end{array}
    \right]
    \left[
      \begin{array}{cc}
        m_+^{\infty}(x) & 0 \\
        0 & n_+^{\infty}(x) \\
      \end{array}
    \right].
\end{equation*}
In particular, the following holds:
\begin{equation*}
  [M(x;0)]_{11} = \frac{\widehat{m}_+^{\infty}(x)}{m_+^{\infty}(x)} = e^{-\frac{i}{2} \int_{x}^{+\infty} (|u|^2+|v|^2) dy}, \qquad
  [\widehat{M}(x;0)]_{11} = \frac{m_+^{\infty}(x)}{\widehat{m}_+^{\infty}(x)} = e^{\frac{i}{2} \int_{x}^{+\infty} (|u|^2+|v|^2) dy}.
\end{equation*}
In these formulas, we regain the same exponential factors as those in the reconstruction formulas
(\ref{e rec u 2}) and (\ref{e rec v 2}). Hence, by substitution we obtain
the following two decoupled reconstruction formulas:
\begin{equation}\label{e rec alternative}
 u(x) = [M(x;0)]_{11} \overline{\lim_{|\omega|\to\infty} \omega [M(x;\omega)]_{12}}, \qquad
 v(x) = [\widehat{M}(x;0)]_{11} \overline{\lim_{|z|\to\infty} z [\widehat{M}(x;z)]_{12}}.
\end{equation}
Whereas equations (\ref{e rec u 1}), (\ref{e rec u 2}), (\ref{e rec v 1}) and (\ref{e rec v 2}) are suitable for studying the inverse map of the scattering transformation in the sense of Theorem \ref{theorem-main}, the equivalent formulas (\ref{e rec alternative}) are useful in the analysis of the asymptotic behavior of $u(x)$ and $v(x)$ as $|x| \to \infty$.
\end{remark}

\subsection{Estimates on the reflection coefficients}

In order to be able to solve the Riemann--Hilbert problems \ref{rhp M stat} and \ref{rhp M stat hat},
we need to derive estimates on the reflection coefficients $r_{\pm}$ and $\widehat{r}_{\pm}$
defined by (\ref{e def r+-}) and (\ref{e def r+- hat}). We start with the Jost functions.
In order to exclude ambiguity in notations, we write $m_{\pm}(x;z) \in H^1_z(\R)$ for the same purpose
as $m_{\pm}(x;\cdot) \in H^1(\R)$.

Thanks to the Fourier theory reviewed in Proposition 1 in \cite{Pelinovsky-Shimabukuro},
the Volterra integral equations (\ref{e volterra m n}) and (\ref{e volterra m n hat})
with the oscillation factors $e^{\frac{i}{2}(\omega^{-1} - \omega)}$ and
$e^{\frac{i}{2} (z - z^{-1}) x}$ are estimated respectively in the limits $|\omega| \to \infty$
and $|z| \to \infty$, where $\omega := z^{-1}$,  similarly to what was done in the proof of Lemma 3 in \cite{Pelinovsky-Shimabukuro}.
As a result, we obtain the following.

\begin{lem}\label{l remainder}
  Let $(u,v) \in H^{1,1}(\R)$. Then for every $x\in\R^{\pm}$, we have
  \begin{equation}\label{e remainder inverted}
  m_{\pm}(x;\omega^{-1}) - m_{\pm}^{\infty}(x) e_1 \in H^1_{\omega}(\R \setminus [-1,1]),\quad
  n_{\pm}(x;\omega^{-1}) - n_{\pm}^{\infty}(x) e_2 \in H^1_{\omega}(\R \setminus [-1,1]).
  \end{equation}
and
  \begin{equation}\label{e remainder hat}
      \widehat{m}_{\pm}(x;z)-\widehat{m}_{\pm} ^{\infty}(x)e_1 \in H^1_z(\R \setminus [-1,1]), \quad
      \widehat{n}_{\pm}(x;z)-\widehat{n}_{\pm}^{\infty}(x)e_2 \in H^1_z(\R\setminus[-1,1]).
  \end{equation}
If in addition $(u,v) \in H^2(\R)$, then
\begin{subequations} \label{e remainder extended inverted}
\begin{eqnarray} \label{e remainder extended m inverted}
&& \omega \left[ \frac{m_{\pm}(x;\omega^{-1})}{m_{\pm}^{\infty}(x)} - e_1 \right] -
      \left(
        \begin{array}{c}
          -\int_{\pm\infty}^x \left[ \overline{u} (u_x-\frac{i}{2}u|v|^2- \frac{i}{2}v)-\frac{i}{2} u\overline{v}\right]dy \\
          2i u_x + u |v|^2 + v
        \end{array}
      \right) \in L^2_{\omega}(\R \setminus [-1,1]), \\
\label{e remainder extended n inverted}
&&   \omega \left[ \frac{n_{\pm}(x;\omega^{-1})}{n_{\pm}^{\infty}(x)} - e_2 \right] -
  \left(
        \begin{array}{c}
          \overline{u}\\
          \int_{\pm\infty}^x \left[\overline{u}(u_x-\frac{i}{2}u|v|^2- \frac{i}{2}v)-\frac{i}{2}u\overline{v}\right] dy \\
        \end{array}
      \right) \in L^2_{\omega}(\R \setminus [-1,1]).
    \end{eqnarray}
\end{subequations}
and
\begin{subequations} \label{e remainder extended hat}
\begin{eqnarray} \label{e remainder extended hat m}
&&  z \left[ \frac{\widehat{m}_{\pm}(x;z)}{\widehat{m}_{\pm}^{\infty}(x)} - e_1 \right] -
      \left(
        \begin{array}{c}
          - \int_{\pm\infty}^x \left[\overline{v} (v_x+\frac{i}{2}|u|^2v+ \frac{i}{2}u)+\frac{i}{2} \overline{u}v\right]dy \\
          - 2 i v_x + |u|^2 v + u \\
        \end{array}
      \right) \in L^2_z(\R \setminus [-1,1]),\\
      \label{e remainder extended hat n}
&& z \left[ \frac{\widehat{n}_{\pm}(x;z)}{\widehat{n}_{\pm}^{\infty}(x)} - e_2 \right] -
  \left(
        \begin{array}{c}
          \overline{v}\\
          \int_{\pm\infty}^x \left[\overline{v}(v_x+\frac{i}{2}|u|^2 v + \frac{i}{2} u) + \frac{i}{2}\overline{u} v \right] dy
        \end{array}
      \right) \in L^2_z(\R\setminus[-1,1]).
    \end{eqnarray}
\end{subequations}
\end{lem}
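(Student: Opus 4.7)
The plan is to reduce the claims to oscillatory-integral estimates applied iteratively to the Neumann series for the Volterra equations (\ref{e volterra m n}) and (\ref{e volterra m n hat}), following the template of Lemma 3 in \cite{Pelinovsky-Shimabukuro}. First I would introduce the renormalized quantities
\[
\widetilde{m}_\pm(x;z) := \frac{m_\pm(x;z)}{m_\pm^\infty(x)} - e_1, \qquad \widetilde{n}_\pm(x;z) := \frac{n_\pm(x;z)}{n_\pm^\infty(x)} - e_2,
\]
and the analogous hatted pair. Dividing (\ref{e volterra m n}) and (\ref{e volterra m n hat}) by the scalar factors $m_\pm^\infty(x), n_\pm^\infty(x)$ (which by construction absorb exactly the leading $(1,1)$- and $(2,2)$-diagonal contributions) yields integral equations whose off-diagonal entries carry the oscillation $e^{\pm\frac{i}{2}(z - z^{-1})(x-y)}$. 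On $\mathbb{R} \setminus [-1,1]$ the factor $e^{-\frac{i}{2}z^{-1}(x-y)}$ is a bounded, Lipschitz-in-$z$ multiplier, so the effective phase for regularity in $z$ is the principal one $e^{\pm\frac{i}{2}z(x-y)}$. Under the substitution $\omega = z^{-1}$, the parallel statement holds for $\omega \mapsto m_\pm(x;\omega^{-1})$ and $\omega \mapsto n_\pm(x;\omega^{-1})$ on $\mathbb{R}\setminus[-1,1]$.

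For the $H^1$-claims (\ref{e remainder inverted}) and (\ref{e remainder hat}) I would invoke Proposition~1 of \cite{Pelinovsky-Shimabukuro}: for $f \in L^{2,1}(\mathbb{R}_\mp)$, the map
\[
z \mapsto \int_{\pm\infty}^{x} e^{\frac{i}{2}z(x-y)} f(y)\,dy
\]
lies in $H^1_z(\mathbb{R})$ with uniform-in-$x$ control. Since $(u,v) \in H^{1,1}(\mathbb{R})$, the entries of $Q_1, Q_2, \widehat{Q}_1, \widehat{Q}_2$ belong to $L^{2,1}$ (the cubic nonlinearities are handled by the $H^1 \hookrightarrow L^\infty$ embedding). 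Each term of the Neumann series for $\widetilde{m}_\pm, \widetilde{n}_\pm$ and the hatted variants is therefore controlled in $H^1$ on $\mathbb{R}\setminus[-1,1]$, with convergence of the series ensured by the uniform $L^\infty_x$-bounds (\ref{e global bound m n}) and (\ref{e global bound m n hat}).

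For the improved estimates (\ref{e remainder extended inverted}) and (\ref{e remainder extended hat}) I would perform a single integration by parts in the oscillatory integral: for $f \in H^1$ vanishing at $\pm\infty$,
\[
\int_{\pm\infty}^{x} e^{\frac{i}{2}z(x-y)} f(y)\,dy = \frac{2i}{z} f(x) - \frac{2i}{z}\int_{\pm\infty}^{x} e^{\frac{i}{2}z(x-y)} f'(y)\,dy.
\]
The boundary contribution at $y=x$ reproduces exactly the explicit vector subtracted inside the brackets of (\ref{e remainder extended inverted})-(\ref{e remainder extended hat}) (matching the next-order terms identified already in Lemmas \ref{l expansion m n} and \ref{l expansion m n hat}), while the remaining oscillatory integral, multiplied by $z$, is the Fourier transform of an $L^{2,1}$ function provided $(u,v) \in H^2 \cap H^{1,1}$, and hence lies in $L^2_z(\mathbb{R}\setminus[-1,1])$. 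Iterating through the Neumann series, higher-order terms pick up additional factors of $z^{-1}$ and fall into the same space.

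The main obstacle is bookkeeping rather than a single hard estimate. The additional terms $zQ_2(u,v)$ in (\ref{e volterra m n}) and $z^{-1}\widehat{Q}_2(u,v)$ in (\ref{e volterra m n hat}) are absent from the Kaup--Newell analysis of \cite{Pelinovsky-Shimabukuro}; although they are bounded on $B_0$ and $B_\infty$ respectively, after multiplication by $z$ (or $\omega$) in the extended estimates they can contribute to the $\omega\to\infty$ (or $z\to\infty$) expansion, and one must check that these contributions are already accounted for in the stationary-at-$y=x$ boundary terms of the integration by parts, so that nothing extra must be subtracted. Likewise, translating an $H^1_\omega(\mathbb{R}\setminus[-1,1])$-bound back to an $H^1_z(\mathbb{R}\setminus[-1,1])$-bound requires checking that the change of variable $z = \omega^{-1}$ is compatible with the weight, which is transparent on $|\omega|,|z|\geq 1$ but is the source of the $\dot{L}^{2,-2}$ factor appearing later in (\ref{X-1-2}).
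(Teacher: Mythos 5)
Your proposal is correct and takes essentially the same route as the paper, which proves this lemma purely by reference: the paper estimates the Volterra equations (\ref{e volterra m n}) and (\ref{e volterra m n hat}) with the oscillatory factors via the Fourier theory of Proposition 1 and the argument of Lemma 3 in \cite{Pelinovsky-Shimabukuro}, i.e.\ exactly the oscillatory-integral bounds in the weighted space $L^{2,1}$ and the single integration by parts (whose boundary terms are the vectors already identified in Lemmas \ref{l expansion m n} and \ref{l expansion m n hat}) that you describe. The extra terms $z Q_2$, $z^{-1}\widehat{Q}_2$ and the inversion $\omega = z^{-1}$ that you flag as bookkeeping are disposed of in the paper by the same observation you make, namely that they are bounded on the relevant component of the partition ($|z|\leq 1$ for the unhatted, $|z| \geq 1$ for the hatted functions).
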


The following lemma transfers the estimates of Lemma \ref{l remainder} to the scattering coefficients $a$ and $b_{\pm}$
by using the same analysis as in the proof of Lemma 4 in \cite{Pelinovsky-Shimabukuro}.

\begin{lem}\label{l scattering regularity}
Let $(u,v)\in H^{1,1}(\R)$. Then,
        \begin{equation}\label{e rgularity scattering coeff}
          a(\omega^{-1}) - a_0, \;\; b_+(\omega^{-1}), \;\; b_-(\omega^{-1}) \in H^1_{\omega}(\R\setminus[-1,1]),
        \end{equation}
        and
        \begin{equation}\label{e rgularity scattering coeff hat}
          a(z) - a_{\infty}, \;\; b_+(z), \;\; b_-(z) \in H^1_z(\R\setminus[-1,1]).
        \end{equation}
If in addition $(u,v) \in H^2(\R)$, then
        \begin{equation}\label{e rgularity scattering coeff more}
b_+(\omega^{-1}), \;\; b_-(\omega^{-1}) \in L^{2,1}_{\omega}(\R\setminus[-1,1]),
        \end{equation}
        and
        \begin{equation}\label{e rgularity scattering coeff more hat}
b_+(z), \;\; b_-(z) \in L^{2,1}_z(\R\setminus[-1,1]).
        \end{equation}
\end{lem}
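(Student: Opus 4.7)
The starting point is to obtain explicit integral representations for the scattering coefficients. For $a$ these are already recorded in (\ref{a integral}); I would derive the analogous representations for $b_\pm$ by taking $x\to +\infty$ in the Volterra equations (\ref{e volterra m n}) and (\ref{e volterra m n hat}) and comparing with the scattering relations (\ref{e scattering relation m n}) and (\ref{e scattering relation m n hat}), obtaining oscillatory integrals of the schematic form
\begin{equation*}
b_+(z) = \int_{\R} e^{-\tfrac{i}{2}(z-z^{-1})x}\, F_+(x;z)\,dx, \qquad
\overline{b_-(z)} = \int_{\R} e^{\tfrac{i}{2}(z-z^{-1})x}\, F_-(x;z)\,dx,
\end{equation*}
where $F_\pm(x;z)$ depends polynomially on $z^{\pm 1}$ with coefficients bilinear in $(u,v)$ and in the components of $m_-$ (near $z=0$) or $\widehat{m}_-$ (near $|z|=\infty$). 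The analogous representations after the substitution $\omega = z^{-1}$ give access to the statements on the other unbounded component.

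To prove (\ref{e rgularity scattering coeff})--(\ref{e rgularity scattering coeff hat}), I would substitute the decomposition $m_-(x;z) = m_-^\infty(x)e_1 + \widetilde{m}_-(x;z)$ and its hatted analog into the integral representations. The $m_-^\infty$-contribution assembles into the explicit constants $a_0, a_\infty$ through a telescoping argument using $\partial_x m_\pm^\infty = \mp\tfrac{i}{4}(|u|^2+|v|^2)m_\pm^\infty$; what is left is an $x$-integral of $(u,v)$ multiplied by $\widetilde{m}_-$ or its hatted version. By Lemma \ref{l remainder} these remainders belong to $H^1_z(\R\setminus[-1,1])$ pointwise in $x$ with norms controlled by $\|(u,v)\|_{H^{1,1}}$, so Minkowski's integral inequality and differentiation under the integral sign (the resulting linear-in-$x$ factor from the phase being absorbed by the weight in $H^{1,1}$) transfer this regularity to $a(z)-a_\infty$ and $a(\omega^{-1})-a_0$. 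For the $b_\pm$ coefficients the leading linear-in-$(u,v)$ piece is handled by Plancherel: the change of variable $\zeta = (z-z^{-1})/2$ has Jacobian $\tfrac{1}{2}(1+z^{-2})$, which is bounded and bounded below on each unbounded component of $\R\setminus[-1,1]$, so Fourier transforms of $H^{1,1}_x$ functions land in $H^1_\zeta$ and hence in $H^1_z$ on the desired set; the nonlinear remainder is controlled by Cauchy--Schwarz against the $L^2_z$-bound on $\widetilde{m}_-$.

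For the refinement (\ref{e rgularity scattering coeff more})--(\ref{e rgularity scattering coeff more hat}) under the stronger assumption $(u,v)\in H^2(\R)$, I would feed the sharper expansions (\ref{e remainder extended inverted})--(\ref{e remainder extended hat}) into the same representations. This isolates an explicit $\mathcal{O}(z^{-1})$ (respectively $\mathcal{O}(\omega^{-1})$) piece modulo an $L^2$ remainder. The point is that for $f\in H^2(\R)$ the Fourier transform lies in $L^{2,1}$ in the dual variable; the nonlinear contributions are cubic and higher in $(u,v)$ and belong to $H^2(\R)\cap L^{2,1}(\R)$ by the algebra property of $H^2$ combined with decay at infinity, so they too contribute to $L^{2,1}$ in $\zeta$ and hence to $L^{2,1}$ in $z$ (or $\omega$) after the change of variable.

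The main technical obstacle is bookkeeping of the spectral change of variable together with the oscillatory phase. Specifically, one must check that $L^{2,1}_\zeta(\R)$ transfers cleanly to $L^{2,1}_z(\R\setminus[-1,1])$: this uses that $\zeta = z/2 + \mathcal{O}(z^{-1})$ for $|z|\ge 1$ and the analogous expansion for $\omega$, so the weights are comparable on the unbounded components. Apart from this, the argument is parallel to the proof of Lemma 4 in \cite{Pelinovsky-Shimabukuro}, with the additional $z\,Q_2$ or $z^{-1}\widehat{Q}_2$ terms in the integrands (see (\ref{e volterra m n}) and (\ref{e volterra m n hat})) giving rise to harmless bounded contributions on $B_0$ and $B_\infty$ respectively.
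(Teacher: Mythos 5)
Your route (global-in-$x$ integral representations for $b_\pm$ obtained from the $x\to+\infty$ limit of the Volterra equations, then Plancherel for the leading term and Minkowski/Cauchy--Schwarz for the remainder) differs from the paper's, which follows Lemma 4 of \cite{Pelinovsky-Shimabukuro}: since the Wronskian representations (\ref{e wronskian for a b+ b-}) and (\ref{e wronskian for a b+ b- hat}) are independent of $x$, one evaluates them at the single point $x=0$, where \emph{both} $\pm$ Jost functions are controlled by Lemma \ref{l remainder}, and then uses the Banach-algebra property of $H^1$ (plus the refined expansions (\ref{e remainder extended inverted})--(\ref{e remainder extended hat}), in which the $O(1)$ constants cancel inside the Wronskian, for the $L^{2,1}$ claims).

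The difference is not cosmetic: your argument has a genuine gap exactly at the transfer step. Your representation for $b_+$ integrates $Q(y)\,m_-(y;z)$ (resp. $\widehat m_-$) over \emph{all} $y\in\R$, but Lemma \ref{l remainder} controls $m_-(x;\cdot)-m_-^\infty(x)e_1$ in $H^1_\omega$ uniformly only on the half-line $x\le 0$; for $x>0$ the scattering relation (\ref{e scattering relation m n}) shows $m_-(x;z)$ contains the term $b_+(z)e^{-\frac{i}{2}(z-z^{-1})x}n_+(x;z)$, whose $z$- (or $\omega$-) derivative grows linearly in $x$, so $\sup_{x>0}\|m_-(x;\cdot)-m_-^\infty(x)e_1\|_{H^1}=\infty$ generically. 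Your proposed fix --- ``the linear-in-$x$ factor is absorbed by the weight in $H^{1,1}$'' --- does not work: $(u,v)\in H^{1,1}$ gives $\langle x\rangle (u,v)\in L^2$, hence $(u,v)\in L^1$, but \emph{not} $\langle x\rangle(|u|+|v|)\in L^1$, which is what a Minkowski bound against linearly growing Jost norms (and against the $y$-factor produced by differentiating the phase when it multiplies the $z$-dependent remainder $\widetilde m_-$) would require. The same obstruction reappears, in stronger form, in your treatment of (\ref{e rgularity scattering coeff more})--(\ref{e rgularity scattering coeff more hat}), where in addition the $O(1)$ limits in (\ref{e remainder extended inverted})--(\ref{e remainder extended hat}) do not obviously drop out of the integral representation, whereas in the Wronskian at $x=0$ they cancel identically (this cancellation is what makes $z b_+(z)$, $\omega b_+(\omega^{-1})$ square integrable). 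To repair your approach you would essentially have to redo the weighted Fourier analysis of the Jost functions on the ``wrong'' half-line; the intended and far shorter argument is the $x$-independence trick described above.
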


The following lemma transfers the estimates of Lemma \ref{l scattering regularity} to
the reflection coefficients $r_{\pm}$ and $\widehat{r}_{\pm}$. We give an elementary
proof of this result since it is based on new computations compared to \cite{Pelinovsky-Shimabukuro}.

\begin{lem}
\label{lemma-r-coefficients}
Assume $(u,v) \in X_{(u,v)}$, where $X_{(u,v)}$ is given by (\ref{X-2-1}),
and $a$ satisfies Assumption \ref{assumption-a}.
Then $(r_+,r_-) \in X_{(r_+,r_-)}$, where $X_{(r_+,r_-)}$ is given by (\ref{X-1-2}).
\end{lem}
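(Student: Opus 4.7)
The plan is to estimate $r_{\pm}(\omega) = b_{\pm}(\omega^{-1})/a(\omega^{-1})$ separately on $|\omega| \geq 1$ and $|\omega| \leq 1$, using the change of variable $z = \omega^{-1}$ on the inner region to convert the problem to estimates on the companion quantity $\widehat{r}_{\mp}(z) = b_{\pm}(z)/a(z)$ on the outer region. First I would record that Assumption \ref{assumption-a} combined with the continuity of $a$ on $\R$ and the nonzero limits $a_0, a_\infty$ from Lemma \ref{l scattering coeff} produces a constant $A > 0$ with $|a(z)| \geq A$ for every $z \in \R$, so that $1/a$ is uniformly bounded and inherits the Sobolev regularity of $a$ on each subregion.

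On the outer region $\R \setminus [-1,1]$, Lemma \ref{l scattering regularity} gives $b_{\pm}(\omega^{-1}) \in H^1_\omega(\R \setminus [-1,1])$ and $a(\omega^{-1}) - a_0 \in H^1_\omega(\R \setminus [-1,1])$. Since $H^1$ is a local algebra, the quotient $r_{\pm}$ inherits $H^1$-regularity on $\R \setminus [-1,1]$, yielding the $\dot{H}^1(\R \setminus [-1,1])$ component. For the contribution from $|\omega| \geq 1$ to $\dot{L}^{2,1}(\R)$, I would invoke $b_\pm(\omega^{-1}) \in L^{2,1}_\omega(\R \setminus [-1,1])$ from (\ref{e rgularity scattering coeff more}) --- which requires $(u,v) \in H^2$ --- together with the uniform bound on $1/a$.

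On the inner region $[-1,1]$, the substitution $z = \omega^{-1}$ together with (\ref{e def r+-}), (\ref{e def r+- hat}), and (\ref{a-b-relations}) yields the identity $r_{\pm}(\omega) = \widehat{r}_{\mp}(z)$. The Jacobian $|d\omega| = |z|^{-2}\,|dz|$ converts weighted norms of $r_\pm$ on $[-1,1]$ into weighted norms of $\widehat{r}_\mp$ on $\R \setminus [-1,1]$; I expect an identity of the form $\int_{-1}^{1} |\omega|^{2}|r_\pm'(\omega)|^{2}\,d\omega = \int_{|z|\geq 1} |\widehat{r}_\mp'(z)|^{2}\,dz$ for the $\dot{H}^{1,1}$ component, and an analogous relation converting the $\dot{L}^{2,-2}$ contribution at the origin in $\omega$ into a weighted $L^2$ contribution at infinity in $z$. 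Applying the outer-region argument to $\widehat{r}_\mp$ using $b_{\mp}(z),\, a(z) - a_\infty \in H^1_z(\R\setminus[-1,1])$ from (\ref{e rgularity scattering coeff hat}) and $b_\mp(z) \in L^{2,1}_z(\R \setminus [-1,1])$ from (\ref{e rgularity scattering coeff more hat}) closes the estimate on the inner region.

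The main obstacle I foresee is matching the weights under the inversion $z = \omega^{-1}$: the Jacobian factor $|z|^{-2}$ must mesh with the weights on both $r_\pm$ and $\widehat{r}_\mp$ so that the $\dot{L}^{2,-2}$ behavior of $r_\pm$ near $\omega = 0$ corresponds precisely to the $\dot{L}^{2,1}$ decay of $\widehat{r}_\mp$ near $z = \infty$. This bookkeeping is what forces the $H^2$ part of the hypothesis $(u,v) \in X_{(u,v)}$ into play: the $H^{1,1}$ part alone produces the $\dot{H}^1$ and $\dot{H}^{1,1}$ components of $X_{(r_+,r_-)}$, while the $H^2$ part enters through (\ref{e rgularity scattering coeff more}) and (\ref{e rgularity scattering coeff more hat}) to yield the $\dot{L}^{2,1}$ and $\dot{L}^{2,-2}$ components.
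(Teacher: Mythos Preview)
Your proposal is correct and follows essentially the same route as the paper: establish $r_{\pm}\in \dot{H}^1\cap \dot{L}^{2,1}$ on $\R\setminus[-1,1]$ directly from Lemma~\ref{l scattering regularity} and the lower bound on $|a|$, then use the identity $r_{\pm}(\omega)=\widehat{r}_{\mp}(\omega^{-1})$ together with the Jacobian of the inversion to transfer the outer-region estimates for $\widehat{r}_{\mp}$ into the $\dot{H}^{1,1}$ and $\dot{L}^{2,-2}$ estimates for $r_{\pm}$ on $[-1,1]$. Your weight bookkeeping under $z=\omega^{-1}$ (including the identity $\int_{-1}^{1}|\omega|^{2}|r_{\pm}'(\omega)|^{2}\,d\omega=\int_{|z|\geq 1}|\widehat{r}_{\mp}'(z)|^{2}\,dz$) and your identification of where the $H^2$ hypothesis enters via (\ref{e rgularity scattering coeff more})--(\ref{e rgularity scattering coeff more hat}) both match the paper exactly; the remaining pieces ($\dot{L}^{2,1}$ on $[-1,1]$ and $\dot{L}^{2,-2}$ on $\R\setminus[-1,1]$) follow from the trivial embeddings $\dot{L}^{2,-2}([-1,1])\hookrightarrow\dot{L}^{2,1}([-1,1])$ and $\dot{L}^{2,1}(\R\setminus[-1,1])\hookrightarrow\dot{L}^{2,-2}(\R\setminus[-1,1])$.
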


\begin{proof}
Under the conditions of the lemma, it follows from Lemma \ref{l scattering regularity} and
from the definitions (\ref{e def r+-}) and (\ref{e def r+- hat}) that
$$
r_{\pm}(\omega) \in \dot{H}^1_{\omega}(\R\setminus[-1,1]) \cap \dot{L}^{2,1}_{\omega}(\R\setminus[-1,1])
$$
and
$$
\hat{r}_{\pm}(\omega) \in \dot{H}^1_{z}(\R\setminus[-1,1]) \cap \dot{L}^{2,1}_{z}(\R\setminus[-1,1]).
$$
It also follows from (\ref{e def r+-}) and (\ref{e def r+- hat}) that $r_{\pm}(\omega)=\widehat{r}_{\mp}(\omega^{-1})$.

If $f(x) \in \dot{L}^{2,1}_x(1,\infty)$ and $\tilde{f}(y) := f(y^{-1})$, then $\tilde{f}(y) \in \dot{L}^{2,-2}_y(0,1)$,
which follows by the chain rule:
\begin{equation*}
  \int_{1}^{\infty}x^2|f(x)|^2dx= \int_{0}^{1} y^{-4} |\tilde{f}(y)|^2dy.
\end{equation*}
Since $\dot{L}^{2,1}(1,\infty)$ is continuously embedded into $\dot{L}^{2,-2}(1,\infty)$
and $\dot{L}^{2,-2}(0,1)$ is continuously embedded into $\dot{L}^{2,1}(0,1)$,
we verify that $r_{\pm}(z) \in \dot{L}^{2,1}_z(\R) \cap \dot{L}^{2,-2}_z(\R)$
and $\hat{r}_{\pm}(\omega) \in \dot{L}^{2,1}_{\omega}(\R) \cap \dot{L}^{2,-2}_{\omega}(\R)$.

Finally, if $f(x) \in \dot{H}^1_x(1,\infty)$ and $\tilde{f}(y) := f(y^{-1})$, then $\tilde{f}(y) \in \dot{H}^{1,1}_y(0,1)$,
which follows by the chain rule $f'(x)=-x^{-2}\tilde{f}'(x^{-1})$ and
\begin{equation*}
  \int_{1}^{\infty}|f'(x)|^2dx= \int_{0}^{1}y^2|\tilde{f}'(y)|^2dy.
\end{equation*}
Combing all requirements together, we obtain the space $X_{(r_+,r_-)}$ both
for $(r_+,r_-)$ in $z$ and for $(\hat{r}_+,\hat{r}_-)$ in $\omega$,
where $X_{(r_+,r_-)}$ is given by (\ref{X-1-2}).
\end{proof}

\begin{remark}
\label{remark-about-r}
It follows from the relations (\ref{e def a b+ b-}) and (\ref{e def a b+ b- hat})  that $r_+(\omega) = \omega r_-(\omega)$ and
$\widehat{r}_+(z) = z \widehat{r}_-(z)$. Then, it follows from Lemma \ref{lemma-r-coefficients}
and the chain rule that
$$
\mbox{\rm if} \;\; r_+,\hat{r}_+ \in \dot{H}^1(\R \setminus [-1,1]) \cap \dot{L}^{2,1}(\R), \;\;
\mbox{\rm then} \;\;  r_-, \hat{r}_- \in \dot{H}^{1,1}(\R \setminus [-1,1]) \cap \dot{L}^{2,2}(\R)
$$
and
$$
\mbox{\rm if} \;\; r_-, \hat{r}_- \in \dot{H}^{1,1}([-1,1]) \cap \dot{L}^{2,-2}(\R) \;\;
\mbox{\rm then} \;\; r_+, \hat{r}_+ \in \dot{H}^1([-1,1]) \cap \dot{L}^{2,-3}(\R).
$$
Therefore, we have $r_+,\hat{r}_+ \in \dot{H}^1(\R) \cap \dot{L}^{2,1}(\R) \cap \dot{L}^{2,-3}(\R)$
and $r_-,\hat{r}_- \in \dot{H}^{1,1}(\R) \cap \dot{L}^{2,2}(\R) \cap \dot{L}^{2,-2}(\R)$.
\end{remark}

\begin{remark}
It may appear strange for the first glance that the direct and inverse scattering transforms for the MTM system (\ref{e mtm})
connect potentials $(u,v) \in X_{(u,v)}$ and reflection coefficients $(r_+,r_-) \in X_{(r_+,r_-)}$ in different spaces,
whereas the Fourier transform provides an isomorphism in the space $H^1(\R) \cap L^{2,1}(\R)$.
However, the appearance of $X_{(u,v)}$ spaces for the potential $(u,v)$ is not surprising
due to the transformation of the linear operator $L$ to the equivalent forms (\ref{L-expansion1}) and (\ref{L-expansion2}).
The condition $(u,v) \in X_{(u,v)}$ ensures that $(Q_{1,2}, \hat{Q}_{1,2}) \in H^1(\R) \cap L^{2,1}(\R)$, hence, the direct
and inverse scattering transform for the MTM system (\ref{e mtm}) provides a transformation
between $(Q_{1,2}, \hat{Q}_{1,2}) \in H^1(\R) \cap L^{2,1}(\R)$ and $(r_+,r_-) \in X_{(r_+,r_-)}$, which is a natural transformation
under the Fourier transform with oscillatory phase $e^{ix(\omega - \omega^{-1})}$.
This allows us to avoid reproducing the Fourier analysis anew and to apply
all the technical results from \cite{Pelinovsky-Shimabukuro} without any changes, as these results generalize
the classical results of Deift \& Zhou \cite{D-Z-1,Z} obtained for the cubic NLS equation.
\end{remark}

\subsection{Solvability of the Riemann--Hilbert problems}

Let us define the reflection coefficient
\begin{equation}
\label{reflection-coeff}
r(\lambda) := \frac{\beta(\lambda)}{\alpha(\lambda)}, \quad \lambda \in \R \cup (i \R) \backslash \{0\}.
\end{equation}
Recall the relations (\ref{e def a b+ b-}), (\ref{e def a b+ b- hat}), (\ref{e def r+-}),
and (\ref{e def r+- hat}) which yield
\begin{equation}
\label{r-relation-1}
\lambda^{-1} r(\lambda) = r_+(\omega) = \omega r_-(\omega), \quad \omega \in \R \backslash \{0\}.
\end{equation}
and
\begin{equation}
\label{r-relation-2}
\lambda r(\lambda) = \widehat{r}_+(z) = z \widehat{r}_-(z), \quad z \in \R \backslash \{0\}.
\end{equation}
Also recall that $z = \lambda^2$ and $\omega = \lambda^{-2}$.
By extending the proof of Propositions 2 and 3 in \cite{Pelinovsky-Shimabukuro},
we obtain the following.

\begin{lem}
If $(r_+,r_-) \in X_{(r_+,r_-)}$, then
\begin{equation}
\label{r-1}
r(\lambda) \in L^{2,1}_{\omega}(\R) \cap L^{\infty}_{\omega}(\R), \quad
r(\lambda) \in L^{2,1}_{z}(\R) \cap L^{\infty}_{z}(\R),
\end{equation}
and
\begin{equation}
\label{r-3}
\lambda^{-1} r_+(\omega) \in L^{\infty}_{\omega}(\R), \quad
\lambda \hat{r}_+(z) \in L^{\infty}_z(\R).
\end{equation}
\end{lem}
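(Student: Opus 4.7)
My strategy splits into three essentially independent estimates. All rest on the algebraic identities $r(\lambda)=\lambda r_+(\omega)=\lambda^{-1}r_-(\omega)$ and $r(\lambda)=\lambda^{-1}\widehat r_+(z)=\lambda\widehat r_-(z)$ coming from (\ref{r-relation-1})--(\ref{r-relation-2}), together with the upgraded regularity listed in Remark \ref{remark-about-r}, i.e.\ $r_+,\widehat r_+\in\dot H^1(\R)\cap\dot L^{2,1}(\R)\cap\dot L^{2,-3}(\R)$ and $r_-,\widehat r_-\in\dot H^{1,1}(\R)\cap\dot L^{2,2}(\R)\cap\dot L^{2,-2}(\R)$.

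I would begin with the cheapest bound, the $L^\infty$ estimate for $r(\lambda)$. The scattering constraint (\ref{scattering-relation-modified}) gives $|\beta(\lambda)|\le 1$ on $\R$ and $|\beta(\lambda)|<|\alpha(\lambda)|$ on $i\R$, while Assumption \ref{assumption-a} combined with the limits (\ref{a-infty})--(\ref{a-hat-infty}) yields $|\alpha(\lambda)|=|a(\lambda^2)|\ge A>0$ uniformly in $\lambda$. Reading off $|r|=|\beta|/|\alpha|$ therefore gives a uniform bound on $(\R\cup i\R)\setminus\{0\}$, so $r\in L^\infty_\omega(\R)\cap L^\infty_z(\R)$.

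For the $L^{2,1}$ estimates I would exploit $|r(\lambda)|^2=|\omega|^{-1}|r_+(\omega)|^2=|\omega|\,|r_-(\omega)|^2$ and split $\int_\R(1+\omega^2)|r|^2\,d\omega$ at $|\omega|=1$. On $|\omega|<1$ I represent $|r|^2=|\omega|\,|r_-|^2$ and estimate $(1+\omega^2)|r|^2\le 2|r_-|^2\le 2|\omega|^{-2}|r_-|^2$, integrable by $r_-\in\dot L^{2,-2}$. On $|\omega|>1$ I represent $|r|^2=|\omega|^{-1}|r_+|^2$ and estimate $(1+\omega^2)|r|^2\le 2\omega^2|r|^2=2|\omega|\,|r_+|^2\le 2\omega^2|r_+|^2$, integrable by $r_+\in\dot L^{2,1}$. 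The $L^{2,1}_z$ bound is identical under the symmetric substitution $(r_\pm,\omega)\leftrightarrow(\widehat r_\mp,z)$.

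The main obstacle is the last claim $\lambda^{-1}r_+(\omega)\in L^\infty_\omega(\R)$, equivalent to $|\omega|^{1/2}|r_+(\omega)|\in L^\infty_\omega(\R)$. On $[-1,1]$ it follows from $r_+\in\dot H^1([-1,1])\cap\dot L^{2,-3}([-1,1])$: the weight $|\omega|^{-6}\ge 1$ puts $r_+$ in $H^1([-1,1])\hookrightarrow L^\infty([-1,1])$. For $|\omega|>1$ the naive Sobolev embedding on $|\omega|^{1/2}r_+$ would require $|\omega|^{1/2}r_+'\in L^2$, which is not at my disposal. Instead I would use $r_+(\infty)=0$ (a consequence of $r_+\in H^1(|\omega|>1)$) together with the fundamental theorem to write
\begin{equation*}
|r_+(\omega)|^2 = -\int_\omega^{\infty}\tfrac{d}{d\eta}|r_+(\eta)|^2\,d\eta \le 2\|r_+\|_{L^2(\omega,\infty)}\|r_+'\|_{L^2(\omega,\infty)},
\end{equation*}
and then control the first factor by Chebyshev, $\|r_+\|_{L^2(\omega,\infty)}^2\le\omega^{-2}\|\eta r_+\|_{L^2(\R)}^2$, yielding $|\omega||r_+(\omega)|^2\le 2\|\eta r_+\|_{L^2}\|r_+'\|_{L^2}$ uniformly in $\omega$. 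The assertion $\lambda\widehat r_+(z)\in L^\infty_z(\R)$ follows from the same argument applied to $\widehat r_+$, which enjoys exactly the same regularity as $r_+$ by Remark \ref{remark-about-r}.
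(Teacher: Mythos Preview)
Your proposal is correct. The $L^{2,1}$ argument and the $\lambda^{-1}r_+\in L^\infty$ argument are essentially the same as the paper's, just with different bookkeeping: the paper writes $|r(\lambda)|^2=|\widehat r_+(z)||\widehat r_-(z)|$ and applies Cauchy--Schwarz with $\widehat r_\pm\in L^{2,1}$, while you split at $|\omega|=1$ and use the piecewise representations; for the $L^\infty$ bound on $\lambda\widehat r_+$ (equivalently $\lambda^{-1}r_+$) the paper integrates $(z\widehat r^2)'$ from $0$ whereas you integrate $(|r_+|^2)'$ from $\infty$ and insert a Chebyshev step --- both are the same fundamental-theorem-of-calculus idea.

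The one genuine methodological difference is your $L^\infty$ bound for $r(\lambda)$ itself. The paper obtains it intrinsically from the function-space hypothesis: on $|z|\ge 1$ via $\widehat r_+\in H^1\hookrightarrow L^\infty$, and on $|z|\le 1$ via the FTC estimate showing $\lambda\widehat r_-\in L^\infty$. You instead invoke the scattering constraints (\ref{scattering-relation-modified}) together with Assumption~\ref{assumption-a} to bound $|r|=|\beta|/|\alpha|$ directly. Your route is shorter, but it uses the fact that $r_\pm$ arise as scattering data for some potential $(u,v)$, which is not part of the lemma's stated hypothesis $(r_+,r_-)\in X_{(r_+,r_-)}$. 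In the context of Section~\ref{sec-RH} this is harmless since Assumption~\ref{assumption-a} is in force throughout; the paper's argument has the advantage of being reusable for abstract data $(r_+,r_-)\in X_{(r_+,r_-)}$ in the inverse direction without reference to $\alpha,\beta$.
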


\begin{proof}
Let us prove the embeddings in $L^2_z(\R)$ space. The proof of the embeddings in $L^2_{\omega}(\R)$ space
is analogous. Relation (\ref{r-relation-2}) implies $|r(\lambda)|^2 = |\hat{r}_+(z)| |\hat{r}_-(z)|$ and
$$
r(\lambda) = \left\{ \begin{array}{l} \lambda^{-1} \hat{r}_+(z), \quad |z| \geq 1, \\
\lambda \hat{r}_-(z), \quad \quad |z| \leq 1. \end{array} \right.
$$
Since $\hat{r}_+,\hat{r}_- \in L^{2,1}(\R)$,
Cauchy--Schwarz inequality implies $r(\lambda) \in L^{2,1}_z(\R)$.
Since $\hat{r}_+ \in H^1(\R)$ by Remark \ref{remark-about-r},
$r(\lambda) \in L^{\infty}_z(\R \setminus [-1,1])$. In order to prove
that $r(\lambda) \in L^{\infty}_z([-1,1])$, we will show
that $\lambda \hat{r}_-(z) \in L^{\infty}_z([-1,1])$. This follows from
the representation
$$
z \hat{r}_-(z)^2 = \int_0^z \left[ \hat{r}_-^2(z) + 2 z \hat{r}_-(z) \hat{r}_-'(z) \right] dz
$$
and the Cauchy--Schwarz inequality, since $\hat{r}_- \in \dot{H}^{1,1}(\R) \cap L^2(\R)$.
Similarly, $\lambda \hat{r}_+(z) \in L^{\infty}(\R)$ since $\hat{r}_+ \in H^1(\R) \cap L^{2,1}(\R)$.
\end{proof}

\begin{remark}
By using the relations (\ref{scattering-relation-modified}), we obtain another constraint on $r(\lambda)$:
\begin{equation}
\label{r-4}
1 - |r(\lambda)|^2 = \frac{1}{|\alpha(\lambda)|^2} \geq c_0^2 > 0, \quad \lambda \in i\mathbb{R},
\end{equation}
where $c_0 := \sup_{\lambda \in i \R} |\alpha(\lambda)| < \infty$, which exists thanks to Lemma \ref{l scattering coeff}.
\end{remark}

Under Assumption \ref{assumption-a} as well as the constraints (\ref{r-1}) and (\ref{r-4}), the jump matrices in
the Riemann--Hilbert problems \ref{rhp M stat} and \ref{rhp M stat hat}
satisfy the same estimates as in Proposition 5 in \cite{Pelinovsky-Shimabukuro}.
Hence these Riemann--Hilbert problems can be solved and estimated
with the same technique as in the proofs of Lemmas 7, 8, and 9 in \cite{Pelinovsky-Shimabukuro}.
The following summarizes this result.

\begin{lem}
\label{lemma-solution}
Under Assumption \ref{assumption-a}, for every $r(\lambda) \in L^2_{\omega}(\R) \cap L^{\infty}_{\omega}(\R)$
satisfying (\ref{r-4}), there exists a unique solution of the Riemann--Hilbert problem \ref{rhp M stat}
satisfying for every $x \in \R$:
\begin{equation}
\| M_{\pm}(x;\omega) - I \|_{L^2_{\omega}} \leq C \| r(\lambda) \|_{L^2_{\omega}},
\end{equation}
where the positive constant $C$ only depends on $\| r(\lambda) \|_{L^{\infty}_{\omega}}$. Similarly,
under Assumption \ref{assumption-a}, for every $r(\lambda) \in L^2_{z}(\R) \cap L^{\infty}_{z}(\R)$
satisfying (\ref{r-4}), there exists a unique solution of the Riemann--Hilbert problem \ref{rhp M stat hat}
satisfying for every $x \in \R$:
\begin{equation}
\| \widehat{M}_{\pm}(x;z) - I \|_{L^2_z} \leq \widehat{C} \| r(\lambda) \|_{L^2_z}
\end{equation}
where the positive constant $\widehat{C}$ only depends on $\| r(\lambda) \|_{L^{\infty}_z}$.
\end{lem}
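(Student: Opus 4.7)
The plan is to establish both halves of the lemma by adapting the Deift--Zhou framework exactly as implemented for the derivative NLS equation in Lemmas 7--9 of \cite{Pelinovsky-Shimabukuro}. Since the two Riemann--Hilbert problems are structurally identical after interchanging $\omega \leftrightarrow z$ and $(r_+,r_-)\leftrightarrow(\widehat{r}_+,\widehat{r}_-)$, I would carry out the argument in detail for Riemann--Hilbert problem \ref{rhp M stat} and note that Riemann--Hilbert problem \ref{rhp M stat hat} is treated verbatim with obvious modifications.

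First, I would factor the jump matrix pointwise in $x\in\R$ and $\omega\in\R$ as
$$I + R(x;\omega) = (I - w_-(x;\omega))^{-1} (I + w_+(x;\omega)),$$
with $w_\pm$ nilpotent triangular matrices whose nonzero off-diagonal entries are $r_+(\omega)e^{\frac{i}{2}(\omega-\omega^{-1})x}$ and $\overline{r_-(\omega)}e^{-\frac{i}{2}(\omega-\omega^{-1})x}$. Because the oscillatory exponentials have modulus one on $\R$, and because the reflection coefficients satisfy $r(\lambda)\in L^2_\omega(\R)\cap L^\infty_\omega(\R)$ by the hypothesis together with (\ref{r-1})--(\ref{r-3}), the matrices $w_\pm(x;\cdot)$ lie in $L^2_\omega(\R)\cap L^\infty_\omega(\R)$ uniformly in $x$. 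I would then convert Riemann--Hilbert problem \ref{rhp M stat} into a singular integral equation for a density $\mu(x;\cdot)$ through the Beals--Coifman operator
$$\mathcal{C}_w\phi := \mathcal{P}_+(\phi w_-) + \mathcal{P}_-(\phi w_+),$$
where $\mathcal{P}_\pm$ are the Cauchy projectors on $L^2_\omega(\R)$. Their $L^2$-boundedness combined with the $L^\infty$-bound on $w_\pm$ gives $\|\mathcal{C}_w\|_{L^2\to L^2}\le C\|r\|_{L^\infty_\omega}$. Once $(I-\mathcal{C}_w)\mu = I$ is uniquely solvable in $L^2$, the reconstruction
$$M(x;\omega) = I + \frac{1}{2\pi i}\int_\R \frac{\mu(x;s)\bigl(w_+(x;s)+w_-(x;s)\bigr)}{s-\omega}\,ds$$
yields the solution of the Riemann--Hilbert problem, and the bound $\|M_\pm(x;\omega)-I\|_{L^2_\omega}\le C\|r\|_{L^2_\omega}$ follows from the $L^2$-boundedness of $\mathcal{P}_\pm$ applied to $\mu(w_++w_-)$, with the constant depending only on $\|r\|_{L^\infty_\omega}$ through $\|\mathcal{C}_w\|$.

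The step I expect to require the most care is the invertibility of $I-\mathcal{C}_w$ on $L^2_\omega(\R)$, which is the Deift--Zhou vanishing lemma. By Fredholm theory it reduces to showing that the homogeneous Riemann--Hilbert problem admits only the trivial bounded analytic solution. Assumption \ref{assumption-a} enters through the non-vanishing of $a$ in $\C^-\cup\R$, which ensures that the factors $M_\pm$ are well-defined; the strict positivity (\ref{r-4}) $1-|r(\lambda)|^2\ge c_0^2>0$ on $\lambda\in i\R$ (equivalently on the negative part of the spectral axis after $\omega=z^{-1}=\lambda^{-2}$) supplies the unitarity--type condition that drives the hermitian--product argument. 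Concretely, if $N$ were a nontrivial bounded solution of the homogeneous problem, one forms $H(\omega) := N_+(x;\omega)N_-(x;\bar\omega)^*$, checks that the jump of $H$ across $\R$ vanishes thanks to the factorization structure, and deduces from $H\in \mathcal{O}(\C)$ with decay at infinity that $H\equiv 0$; the sign conditions on $1\pm r_+\overline{r_-}$ dictated by (\ref{r-4}) then force $N\equiv 0$. The main obstacle is precisely organizing this vanishing argument consistently across the two segments $\R^\pm$ of the jump contour, where the sign of $1-|r|^2$ differs, so that a single closed-contour identity does the work; once this is in place, existence, uniqueness, and the asserted $L^2$-bound are immediate from the Neumann series $(I-\mathcal{C}_w)^{-1} = \sum_{k\ge 0}\mathcal{C}_w^k$ in the regime where $\|\mathcal{C}_w\|<1$ and from a standard compactness-plus-Fredholm argument otherwise.
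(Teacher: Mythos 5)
Your proposal follows essentially the same route as the paper: the paper proves this lemma by invoking the jump-matrix estimates and the Beals--Coifman/vanishing-lemma machinery of Proposition 5 and Lemmas 7--9 in \cite{Pelinovsky-Shimabukuro}, which is exactly the triangular factorization, singular integral equation for $(I-\mathcal{C}_w)\mu = I$, and Zhou-type vanishing argument (with positivity supplied by Assumption \ref{assumption-a} and (\ref{r-4}) on the half-line where the sign of $1-|r|^2$ matters) that you outline. Your sketch correctly identifies the delicate point and the source of the constant's dependence on $\|r\|_{L^{\infty}}$, so it is consistent with the paper's (cited) proof.
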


The potentials $u$ and $v$ are recovered respectively from $M$ and $\widehat{M}$ by means of the reconstruction formulas
(\ref{e rec u 2}) and (\ref{e rec v 2}), whereas the derivatives of the potentials $u'$ and $v'$ are recovered
from the reconstruction formulas (\ref{e rec u 1}) and (\ref{e rec v 1}). At the first order in terms of the scattering coefficient
(see, e.g., \cite{Beals1984}), we have to analyze the integrals like
\begin{equation}\label{e BealsCoifman approx}
  \lim_{|\omega| \to \infty} \omega [M(x;\omega)]_{12}
   \sim \frac{i}{2\pi }\int_{\R} \overline{r_-(\omega)} e^{-\frac{i}{2} (\omega-\omega^{-1})x} d \omega
\end{equation}
in the space $H^1_x(\R) \cap L^{2,1}_x(\R)$. In order to control the remainder term of
the representation (\ref{e BealsCoifman approx}) in $H^1_x(\R) \cap L^{2,1}_x(\R)$,
we need to generalize Proposition 7 in \cite{Pelinovsky-Shimabukuro} for the case of the oscillatory factor
\begin{equation*}
  \Theta(s)=\frac12 \left(s-\frac{1}{s}\right).
\end{equation*}
The following lemma presents this generalization in the function space
$$
X_0 := H^1(\mathbb{R} \backslash [-1,1]) \cap \dot{H}^{1,1}([-1,1]) \cap \dot{L}^{2,-1}([-1,1]).
$$
The proof of this lemma is a non-trivial generalization of analysis of the Fourier integrals.

\begin{lem}\label{p estimate cauchy opearator}
There is a positive constant $C$ such that for all $x_0\in\R_+$ and all $f\in X_0$, we have
\begin{equation}\label{e technical estimate1}
\sup_{x\in(x_0,\infty)}\|\langle x\rangle\mathcal{P}^{\pm}[f(\diamond) e^{\mp ix\Theta(\diamond)}]\|_{L^2(\R)}\leq C \|f\|_{X_0}
\end{equation}
where $\langle x\rangle:=(1+x^2)^{1/2}$ and the Cauchy projection operators are explicitly given by
\begin{equation*}
  \mathcal{P}^{\pm}[f(\diamond)](z):=\lim_{\eps\downarrow 0}\frac{1}{2\pi i}
  \int_{\R}\frac{f(s)}{s-(z\pm i\eps)}ds,\quad z\in\R.
\end{equation*}
In addition, if $f\in X_0\cap \dot{L}^{2,-1}(\R)$, then
\begin{equation}\label{e technical estimate2}
\sup_{x\in\R}\|\mathcal{P}^{\pm}[f(\diamond) e^{\mp ix\Theta(\diamond)}]\|_{L^{\infty}(\R)}\leq
C \left( \|f\|_{X_0}+\|f\|_{\dot{L}^{2,-1}(\R)}\right).
\end{equation}
Furthermore, if $f\in L^{2,1}(\R) \cap \dot{L}^{2,-1}(\R)$, then
\begin{equation}\label{e technical estimate3}
\sup_{x\in\R}\|\mathcal{P}^{\pm} [(\diamond-\diamond^{-1})f(\diamond) e^{\mp ix\Theta(\diamond)}]\|_{L^2(\R)}\leq
C \left( \|f \|_{L^{2,1}(\R)} + \|f\|_{\dot{L}^{2,-1}(\R)} \right).
\end{equation}
\end{lem}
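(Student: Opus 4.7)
The phase $\Theta(s) = \frac{1}{2}(s - s^{-1})$ is a nonlinear generalization of the Fourier phase treated in Proposition 7 of \cite{Pelinovsky-Shimabukuro}, with the additional feature of a singularity at $s = 0$. Two structural properties will drive the proof. First, $\Theta$ is a smooth diffeomorphism of each of the two components of $\R \setminus [-1,1]$ onto $\R$, with Jacobian $\Theta'(s) = \frac{1}{2}(1 + s^{-2}) \in [\tfrac{1}{2}, 1]$ on the exterior. Second, the inversion symmetry $\Theta(s^{-1}) = -\Theta(s)$ together with the Jacobian identities
\begin{equation*}
\int_{-1}^1 |f(s)|^2 s^{-2}\, ds = \int_{|\omega|\geq 1} |\tilde{f}(\omega)|^2\, d\omega, \qquad
\int_{-1}^1 |f'(s)|^2 s^{2}\, ds = \int_{|\omega|\geq 1} |\tilde{f}'(\omega)|^2\, d\omega,
\end{equation*}
for $\tilde{f}(\omega) := f(\omega^{-1})$, show that $\dot{H}^{1,1}([-1,1]) \cap \dot{L}^{2,-1}([-1,1])$ is isometric to $H^1(\R \setminus [-1,1])$ under the inversion $\omega = s^{-1}$. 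Hence the space $X_0$ is precisely the one that makes the interior and exterior portions of $f$ sit in equivalent function classes after inversion.

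The plan is to decompose $f = \chi_{\R \setminus [-1,1]} f + \chi_{[-1,1]} f$ and treat each piece separately. For the exterior piece I would apply the change of variable $\tau = \Theta(s)$ on each of $(1,\infty)$ and $(-\infty,-1)$: this converts the oscillatory factor into the pure Fourier kernel $e^{\mp ix\tau}$ and expresses the Cauchy kernel $(s(\tau) - z)^{-1}$ as the standard Cauchy kernel plus a smooth $\mathcal{O}(s^{-2})$ correction. The leading term reduces directly to the estimates of Proposition 7 of \cite{Pelinovsky-Shimabukuro}, and the correction is handled as a bounded perturbation. For the interior piece, the substitution $\omega = s^{-1}$ combined with the partial-fraction identity
\begin{equation*}
\frac{1}{s - z} = -\frac{1}{z} - \frac{1}{z^{2}(\omega - z^{-1})}
\end{equation*}
expresses the interior Cauchy projection as a Fourier-type integral against $\tilde{f}$ (with no Cauchy kernel) plus an exterior Cauchy projection of $\tilde{f}$ evaluated at the inverted spectral point $z^{-1}$. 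Both pieces are then controlled by the exterior estimates, together with the isometry $L^2_z([-1,1]) \cong \dot{L}^{2,-2}_\omega(\R \setminus [-1,1])$ under $z \mapsto z^{-1}$.

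With this reduction in hand, estimate (\ref{e technical estimate1}) follows from the $\langle x\rangle^{-1}$ decay obtained by integrating by parts once in $\tau$ (for which $f \in H^1$ on each piece suffices, as provided by $X_0$) combined with $L^2$-boundedness of the Cauchy projection; the restriction $x > x_0 > 0$ is used so that the $1/x$ gain dominates $\langle x \rangle$. Estimate (\ref{e technical estimate2}) is obtained from pointwise Cauchy--Schwarz, with the additional $\|f\|_{\dot{L}^{2,-1}}$ arising from the behavior of the exterior Cauchy projection at the inverted spectral point $z^{-1}$ as $z \to 0$, which requires an extra weight on $\tilde{f}$ near infinity and hence on $f$ near zero. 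Estimate (\ref{e technical estimate3}) uses the identity $s - s^{-1} = 2\Theta(s)$ together with $\partial_s e^{\mp ix\Theta(s)} = \mp i x\, \Theta'(s)\, e^{\mp ix\Theta(s)}$ to integrate by parts in the oscillatory factor: one gains $1/x$ at the cost of transferring a derivative onto $f/\Theta'(s)$, producing exactly the weighted norms $\|f\|_{L^{2,1}(\R)} + \|f\|_{\dot{L}^{2,-1}(\R)}$.

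The main obstacle is the interplay between the Cauchy operator and the inversion $\omega = s^{-1}$: unlike the oscillatory phase, the Cauchy kernel is not preserved by inversion, and the resulting partial-fraction expansion produces a Fourier-type term that must be bounded in exactly the weighted spaces appearing on the right-hand sides of (\ref{e technical estimate1})--(\ref{e technical estimate3}). Carefully tracking how the weight factors arise from the various transformations, and in particular verifying that the two $\dot{L}^{2,-1}$ norms appearing in (\ref{e technical estimate2}) and (\ref{e technical estimate3}) come from structurally different mechanisms (one from the boundary behavior of the exterior Cauchy projection at $z^{-1}$, the other from the integration by parts in $\Theta$), is the central book-keeping challenge of the proof.
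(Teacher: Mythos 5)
Your plan has genuine gaps, starting with its geometric premise: $\Theta$ does \emph{not} map the two components of $\R\setminus[-1,1]$ onto $\R$; it maps $(1,\infty)$ onto $(0,\infty)$ and $(-\infty,-1)$ onto $(-\infty,0)$. The substitution that actually linearizes the phase is $y=s-s^{-1}$ on each of the half-lines $(0,\infty)$ and $(-\infty,0)$, which maps onto all of $\R$ and automatically glues your ``interior'' and ``exterior'' pieces into a single function $\tilde f(y)=f(s(y))$ with $\|\tilde f\|_{H^1(\R)}\leq \|f\|_{X_0}$ -- this is precisely how the paper proceeds (splitting $f$ by the sign of $s$, not by $|s|\lessgtr 1$, and introducing $\mathfrak{a}[f](k)=\int_0^\infty e^{-ik(s-s^{-1})}\frac{1+s^2}{s^2}f(s)\,ds$), and it is the reason $X_0$ is the natural space. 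Your exterior reduction ``to Proposition 7 plus a bounded perturbation'' does not go through as stated: after $\tau=\Theta(s)$ you are left with a half-line integral whose endpoint at $s=\pm1$ produces, under integration by parts, boundary terms of the form $f(\pm1)/\bigl(x(z\mp1)\bigr)$ that are not in $L^2_z(\R)$; and the transformed kernel is not ``standard Cauchy plus a smooth $\mathcal{O}(s^{-2})$ correction'': one has $\frac{1}{s-z}=\bigl(1+\frac{1}{sz}\bigr)\frac{1}{2(\Theta(s)-\Theta(z))}$, which retains the Cauchy singularity and degenerates exactly where $sz=-1$, because $\Theta$ is two-to-one on $\R\setminus\{0\}$.

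The mechanisms you give for the three estimates are also not the ones that work. For (\ref{e technical estimate1}), the $\langle x\rangle^{-1}$ gain cannot come from one integration by parts against the phase combined with $L^2$-boundedness of $\mathcal{P}^\pm$: the $\tau$-derivative also hits the Cauchy kernel, and the endpoint terms above are not square integrable in $z$. The actual mechanism is a frequency splitting of the representation $f(s)=\frac{1}{2\pi}\int e^{ik(s-s^{-1})}\mathfrak{a}[f](k)dk$ at $k\sim x/4$: the high-frequency part is $\mathcal{O}(\langle x\rangle^{-1})$ in $L^2$ because $\mathfrak{a}[f]\in L^{2,1}_k$ (this is where $f\in X_0$ enters), while the low-frequency part times the oscillation extends analytically to $\{\Imag(s)<0\}$ with exponential decay, so its projection is handled by deforming the Cauchy integral to $i\R_-$ and using boundedness of the Cauchy operator from $L^2(i\R_-)$ to $L^2(\R)$. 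For (\ref{e technical estimate2}), pointwise Cauchy--Schwarz cannot give an $L^\infty$ bound, since $\mathcal{P}^\pm$ does not map $L^2$ into $L^\infty$; the proof requires the Hardy-type splitting $f=h_++h_-$ by the sign of $k$, analytic continuation into quarter planes, a residue computation, and the shift property $\mathfrak{a}[e^{-ix\Theta}f](k)=\mathfrak{a}[f](k+\tfrac{x}{2})$, with the $\dot L^{2,-1}$ norm entering in the elementary estimate for $z\leq 0$ rather than through an ``inverted spectral point.'' Finally, (\ref{e technical estimate3}) neither needs nor admits your integration by parts: no derivative of $f$ is assumed, there is no $1/x$ to gain at $x=0$, and the stated right-hand side contains no $H^1$ norm; the estimate is immediate from the fact that $(s-s^{-1})f\in L^2(\R)$ when $f\in L^{2,1}(\R)\cap\dot L^{2,-1}(\R)$ together with $\|\mathcal{P}^\pm\|_{L^2\to L^2}=1$.
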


\begin{proof}
Consider the decomposition
$$
f(s) e^{\mp ix\Theta(s)} = f(s) e^{\mp ix\Theta(s)} \chi_{\R_-}(s) +
f(s) e^{\mp ix\Theta(s)} \chi_{\R_+}(s),
$$
where $\chi_S$ is a characteristic function on the set $S \subset \R$.
Thanks to the linearity of $\mathcal{P}^{\pm}$, it is sufficient to consider separately the functions $f$ that vanish either
on $\R_+$ or on $\R_-$. In the following we give an estimate for
$\mathcal{P}^{+}[f(\diamond) e^{- ix\Theta(\diamond)} \chi_{\R_+}(\diamond)]$.
The other case is handled analogously.

Fix $x > 0$ and consider the following decomposition:
\begin{equation}\label{e decomp of r}
  f(s)e^{-ix\Theta(s)} \chi_{\R_+}(s) = h_{I}(x,s)+h_{II}(x,s),
\end{equation}
with
\begin{equation*}
  h_{I}(x,s)=e^{-ix\Theta(s)}\frac{1}{2\pi} \int_{x/4}^{\infty}e^{ik(s-s^{-1})}\mathfrak{a}[f](k)dk
\end{equation*}
and
\begin{equation*}
  h_{II}(x,s)=e^{-i\frac{x}{4}(s-s^{-1})}\frac{1}{2\pi} \int_{-\infty}^{x/4}e^{i(k-\frac{x}{4})(s-s^{-1})}\mathfrak{a}[f](k)dk,
\end{equation*}
where
\begin{equation}\label{e def frac A}
  \mathfrak{a}[f](k):=\int_{0}^{\infty} e^{-ik(s-s^{-1})} \frac{1+s^2}{s^2} f(s) ds.
\end{equation}
The following change of coordinates
\begin{equation*}
  y(s)=s-s^{-1},\qquad s(y)=\frac{y}{2}+\sqrt{1+\frac{y^2}{4}}, \qquad
  s'(y)= \frac{1}{2}+\frac{y}{4} \left(\sqrt{1+\frac{y^2}{4}}\right)^{-1} = \frac{s(y)^2}{1+s(y)^2}
\end{equation*}
shows that $\mathfrak{a}[f](k)=\mathfrak{F}[\tilde{f}](k)$, where the function $\tilde{f}$ is given by
\begin{equation*}
  \tilde{f}(y)=f(s(y)),\quad y\in\R
\end{equation*}
and $\mathfrak{F}$ denotes the Fourier transform
\begin{equation*}
  \mathfrak{F}[\tilde{f}](k)=\int_{-\infty}^{\infty} e^{-iky} \tilde{f}(y) dy.
\end{equation*}
We obtain
\begin{equation*}
  \|\tilde{f}\|_{L^2(\R)}^2=\int_{\R} |f(s(y))|^2 dy =  \int_{0}^{\infty} \frac{1+s^2}{s^2} |f(s)|^2 ds \leq \|f\|^2_{X_0}
\end{equation*}
and
\begin{eqnarray*}
  \|\tilde{f}'\|_{L^2(\R)}^2 =\int_{\R} \left(\frac{s(y)^2}{1+s(y)^2}\right)^2|f'(s(y))|^2 dy =  \int_{0}^{\infty} \frac{s^2}{1+s^2} |f'(s)|^2 ds \leq \|f\|^2_{X_0}.
\end{eqnarray*}
It follows that $\tilde{f}\in H^1(\R)$ and thus by Fourier theory $\mathfrak{a}[f](k)\in L_k^{2,1}(\R)$. Using the inverse Fourier transform
\begin{equation*}
  \mathfrak{F}^{-1}[g](y)= \frac{1}{2\pi} \int_{\R}e^{iyk}g(k)dk,
\end{equation*}
we find for $s>0$:
\begin{equation}\label{e rep f(s) ito a(k)}
  f(s)=\tilde{f}(y(s)) \\
   = \mathfrak{F}^{-1}[\mathfrak{a}[f]](y(s))\\
   = \frac{1}{2\pi} \int_{\R}e^{ik(s-s^{-1})}\mathfrak{a}[f](k)dk.
\end{equation}

Addressing the decomposition (\ref{e decomp of r}), we obtain
for the functions $h_{I}$ thanks to $s'(y)<1$:
\begin{equation}\label{e L^2 norm hI}
  \|h_{I}(x,\cdot)\|^2_{L^2(\R_+)}\leq \left\|\frac{1}{2\pi} \int_{x/4}^{\infty}e^{iky}\mathfrak{a}[f](k)dk  \right\|_{L^{2}_y(\R)} = \int_{x/4}^{\infty}|\mathfrak{a}[f](k)|^2dk \leq \frac{C}{1+x^2} \|\mathfrak{a}[f]\|^2_{L^{2,1}(\R)}.
\end{equation}
On the other hand, the function $h_{II}(x,\cdot)$ is analytic in the domain $\{\Imag(s)<0\}$ and additionally for $s=-i\xi$ with $\xi\in\R_+$ we have
\begin{equation*}
  |h_{II}(x,s)|\leq C \|\mathfrak{a}[f]\|_{L^{2,1}(\R)} e^{-\frac{x}{4}(\xi+\xi^{-1})}.
\end{equation*}
Therefore, $\|h_{II}(x,\cdot)\|_{L^2(i\R_-)}$ is decaying exponentially as $x\to\infty$. Now we have
\begin{equation*}
  \|\mathcal{P}^{+}[f(\diamond) e^{- ix\Theta(\diamond)} \chi_{\R_+}(\diamond)]\|_{L^2(\R)}\leq  \|\mathcal{P}^+[h_{I}(x,\diamond) \chi_{\R_+}(\diamond)]\|_{L^2(\R)} +\|\mathcal{P}^+[h_{II}(x,\diamond) \chi_{\R_+}(\diamond)]\|_{L^2(\R)}
\end{equation*}
Since $\mathcal{P}^+$ is a bounded operator $L^2(\R_+)\to L^2(\R)$ it follows by (\ref{e L^2 norm hI}) that
$$
\|\mathcal{P}^+[h_{I}(x,\diamond) \chi_{\R_+}(\diamond)]\|_{L^2(\R)}\leq \|h_{I}(x,\cdot)\|^2_{L^2(\R_+)} \leq
C \langle x\rangle^{-1} \|f\|^2_{X_0}.
$$
Using a suitable path of integration and the analyticity of $h_{II}$ we find that
\begin{equation*}
    \mathcal{P}^+[h_{II}(x,\diamond)](z)= -\mathcal{P}_{i\R_-}[h_{II}(x,\diamond)](z),
\end{equation*}
where
\begin{equation*}
  \mathcal{P}_{i\R_-}[h](z):=\frac{1}{2\pi i}
  \int_{-\infty}^0\frac{h(is)}{is-z}ds,\quad z\in\R,
\end{equation*}
for a function $h:i\R_-\to\C$. Since $\mathcal{P}_{i\R_-}$ is a bounded operator $L^2(i\R_-)\to L^2(\R)$
(see, e.g., estimate (23.11) in \cite{Beals1988}) and because $\|h_{II}(x,\cdot)\|_{L^2(i\R_-)}$
is decaying exponentially as $x\to\infty$, the proof of the estimate (\ref{e technical estimate1}) is complete.

In order to prove the estimate (\ref{e technical estimate2}), we first note that for $z\leq 0$
\begin{eqnarray}
|\mathcal{P}^+[e^{-ix\Theta(\diamond)}f(\diamond) \chi_{\R_+}(\diamond)](z)| & \leq & \int_{0}^{\infty} \frac{|f(s)|}{s}ds
\nonumber \\
   &\leq & \left(\int_{0}^{1}\frac{|f(s)|^2}{s^2}ds\right)^{1/2}\!+ \! \left(\int_{1}^{\infty}\frac{1}{s^2}ds\right)^{1/2} \! \left(\int_{1}^{\infty}|f(s)|^2ds\right)^{1/2} \nonumber \\
   &\leq & C \left( \| f \|_{X_0} + \|f\|_{\dot{L}^{2,-1}(\R)} \right).\label{e estimate z<0}
\end{eqnarray}
Thus it remains to estimate $|\mathcal{P}^+[e^{-ix\Theta(\diamond)}f(\diamond) \chi_{\R_+}(\diamond)](z)|$ for $z>0$.
First, we will derive a bound for the special case $x=0$ and by (\ref{e shift property}) below we will see that the same bound holds for any $x\in\R$. Therfore, using (\ref{e rep f(s) ito a(k)}) we decompose
\begin{equation*}
  f(s)=h_+(s)+h_-(s),\qquad h_{\pm} (s):=\pm\frac{1}{2\pi} \int_{0}^{\pm\infty}e^{ik(s-s^{-1})}\mathfrak{a}[f](k)dk,
\end{equation*}
where $h_{\pm}$ has an analytic extension within the domain $\{s\in\C: \Real(s)>0,\pm\Imag(s)>0\}$ and for $\xi>0$ we have
\begin{equation}\label{e |h(s)|}
  |h_{\pm} (\pm i\xi)|\leq C \|e^{-k(\xi+\xi^{-1})}\|_{L^2(\R_+)} \|\mathfrak{a}[f]\|_{L^2_k(\R_{\pm})} =
  \frac{C}{\sqrt{2}} \sqrt{\frac{\xi}{1+\xi^2}} \|\mathfrak{a}[f]\|_{L^2_k(\R_{\pm})}.
\end{equation}
Using a residue calculation we obtain for $z>0$
\begin{eqnarray*}
  \mathcal{P}^{+}[f(\diamond) \chi_{\R_+}(\diamond)](z)
   &=& \lim_{\eps\downarrow 0}\frac{1}{2\pi i}
  \int_{0}^{\infty}\frac{h_+(s)+h_-(s)}{s-(z\pm i\eps)}ds  \\
   &=&  \mathcal{P}_{i\R_{+}}[h_+](z) -\mathcal{P}_{i\R_{-}}[h_-](z)+ h_{+}(z).
\end{eqnarray*}
Thanks to the bound (\ref{e |h(s)|}), the summands $\mathcal{P}_{i\R_{+}}[h_+](z)$ and $\mathcal{P}_{i\R_{-}}[h_-](z)$ are estimated in the following way,
\begin{eqnarray*}
  \sup_{z\in\R_+}|\mathcal{P}_{i\R_{\pm}}[h_{\pm}](z)| &\leq & \int_{0}^{\infty}\frac{|h_{\pm}(\pm i \xi)|}{\xi}d\xi \\
   &\leq&  C \int_{0}^{\infty}\frac{1}{\sqrt{\xi}\sqrt{1+\xi^2}}d\xi  \|\mathfrak{a}[f]\|_{L^2_k(\R_{\pm})} \\
   &\leq& C\|\mathfrak{a}[f]\|_{L^2_k(\R_{\pm})}.
\end{eqnarray*}
In addition, for $z>0$ we have $|h_{+}(z)|\leq \|\mathfrak{a}[f]\|_{L^1_k(\R_+)}$ so that
the triangle inequality implies:
\begin{equation}\label{e ghjk}
  \sup_{z\in\R_+}|\mathcal{P}^{+}[f(\diamond) \chi_{\R_+}(\diamond)](z)|\leq C\left( \|\mathfrak{a}[f]\|_{L^1(\R)}+\|\mathfrak{a}[f]\|_{L^2(\R)}\right).
\end{equation}
Now, let us reinsert the factor $e^{-ix\Theta(s)}$. From the definition of $\mathfrak{a}$ it follows that multiplication by $e^{-ix\Theta(s)}$ is equivalent of a shift of $\mathfrak{a}[f](k)$ in the $k$-variable,
\begin{equation}\label{e shift property}
  \mathfrak{a}[e^{-ix\Theta(\diamond)}f(\diamond)](k)= \mathfrak{a}[f(\diamond)]\left(k+\frac{x}{2}\right).
\end{equation}
Thus, the $L^1(\R)\cap L^2(\R)$-norm with respect to $k$ of $\mathfrak{a}[e^{-ix\Theta(\diamond)}f(\diamond)](k)$ does not depend on $x$. Therefore, (\ref{e ghjk}) yields
\begin{eqnarray}
\nonumber
  \sup_{z\in\R_+} |\mathcal{P}^{+}[e^{-ix\Theta(\diamond)}f(\diamond) \chi_{\R_+}(\diamond)](z)| & \leq &
  C\|\mathfrak{a}[e^{-ix\Theta(\diamond)} f(\diamond)]\|_{L^1(\R)\cap L^2(\R)} \\
  & = & C \|\mathfrak{a}[f]\|_{L^1(\R)\cap L^2(\R)}\leq C\|f\|_{X_0},
\end{eqnarray}
which, together with (\ref{e estimate z<0}), completes the proof of (\ref{e technical estimate2}).

Finally, the bound (\ref{e technical estimate3}) follows from $\|\mathcal{P}^{\pm}\|_{L^2\to L^2}=1$
and the fact that $(s-s^{-1})f(s)\in L_s^2(\R)$ if $f\in L^{2,1}(\R) \cap \dot{L}^{2,-1}(\R)$.
\end{proof}

The first term in (\ref{e BealsCoifman approx}) is estimated with a similar change of
coordinates $y := \omega-\omega^{-1}$ and further analysis in the proof of Lemma \ref{p estimate cauchy opearator}.
However, it is controlled $H^1_x(\R) \cap L^{2,1}_x(\R)$ if the scattering coefficient
$r_-$ is defined in $X_{(r_+,r_-)}$ according to the bound
\begin{equation}
\label{BC-bound}
\left| \int_{\R} \overline{r_-(\omega)} e^{-\frac{i}{2} (\omega-\omega^{-1})x} d \omega \right|_{H^1_x(\R) \cap L^{2,1}_x(\R)}
\leq C \| r_- \|_{X_{(r_+,r_-)}}.
\end{equation}
By using the estimate (\ref{BC-bound}) and the estimates of Lemma \ref{p estimate cauchy opearator},
we can proceed similarly to Lemmas 10, 11, and 12 in \cite{Pelinovsky-Shimabukuro}.
The following lemma summarize the estimates on the potential $(u,v)$ obtained from
the reconstruction formulas (\ref{e rec u 1})--(\ref{e rec u 2}) and (\ref{e rec v 1})--(\ref{e rec v 2}).

\begin{lem}
\label{lemma-recovery}
Under Assumption \ref{assumption-a}, for every $(r_+,r_-) \in X_{(r_+,r_-)}$
and $(\widehat{r}_+,\widehat{r}_-) \in X_{(r_+,r_-)}$,
the components $(u,v) \in X_{(u,v)}$ satisfy the bound
\begin{equation}
\| u \|_{H^2 \cap H^{1,1}} + \| v \|_{H^2 \cap H^{1,1}} \leq C
\left( \| r_+ \|_{X_{(r_+,r_-)}} + \| r_- \|_{X_{(r_+,r_-)}} + \| \hat{r}_+ \|_{X_{(r_+,r_-)}} + \| \hat{r}_- \|_{X_{(r_+,r_-)}} \right),
\end{equation}
where the positive constant $C$ depends on $\| r_{\pm} \|_{X_{(r_+,r_-)}}$ and $\| \hat{r}_{\pm} \|_{X_{(r_+,r_-)}}$.
\end{lem}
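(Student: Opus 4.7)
The plan is to estimate the right-hand sides of the reconstruction formulas (\ref{e rec u 1})--(\ref{e rec u 2}) for $u$ and (\ref{e rec v 1})--(\ref{e rec v 2}) for $v$ in the space $H^1(\R) \cap L^{2,1}(\R)$, and then invert the unimodular gauge factors to recover $(u,v) \in H^2(\R) \cap H^{1,1}(\R) = X_{(u,v)}$ via the argument sketched in the paragraph following (\ref{e rec v 2}). The first step is to use Lemma \ref{lemma-solution} to solve the Riemann--Hilbert problems \ref{rhp M stat} and \ref{rhp M stat hat} and then write the Beals--Coifman integral representation
$$
\lim_{|\omega|\to\infty} \omega [M(x;\omega)]_{12} = -\frac{1}{2\pi i}\int_{\R} [M_-(x;\omega)\, R(x;\omega)]_{12}\,d\omega,
$$
and analogously for $[M(x;\omega)]_{21}$, $[\widehat{M}(x;z)]_{12}$, $[\widehat{M}(x;z)]_{21}$. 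Expanding $M_- = I + \mathcal{P}^-[R(x;\cdot)] + \mathcal{P}^-\bigl[\mathcal{P}^-[R]\cdot R\bigr] + \cdots$ as a Neumann series decomposes the limit into a leading-order Born term plus higher-order iterates.

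Next, I would estimate the leading-order term, namely the Fourier-type integral (\ref{e BealsCoifman approx}). After the change of variable $y = \omega - \omega^{-1}$ this becomes an ordinary Fourier transform, and the hypothesis $r_- \in X_{(r_+,r_-)}$, together with the additional weighted bounds collected in Remark \ref{remark-about-r}, furnishes the estimate (\ref{BC-bound}) in $H^1_x(\R) \cap L^{2,1}_x(\R)$. The higher-order Neumann iterates are then handled by Lemma \ref{p estimate cauchy opearator}: the weighted bound (\ref{e technical estimate1}) supplies the $L^{2,1}_x$ control, (\ref{e technical estimate2}) gives the uniform $L^{\infty}_\omega$ bounds needed to bootstrap the iteration, and (\ref{e technical estimate3}) absorbs the factor $s-s^{-1}$ produced upon $x$-differentiation of the oscillatory kernel. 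Combined, these estimates control $\lim_{|\omega|\to\infty}\omega[M(x;\omega)]_{12}$ in $H^1_x(\R) \cap L^{2,1}_x(\R)$ by the $X_{(r_+,r_-)}$-norms of $r_{\pm}$, and analogously for the three remaining limits in terms of $\widehat{r}_{\pm}$.

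The third step uses (\ref{e rec u 1}) to control $2iu' + u|v|^2 + v$ in $L^2_x \cap L^{2,1}_x$ through the same Cauchy-integral machinery applied to $[M(x;\omega)]_{21}$, and analogously (\ref{e rec v 1}) controls $-2iv' + |u|^2 v + u$. Since the cubic nonlinearities are automatically controlled by the Sobolev embedding $H^1 \hookrightarrow L^{\infty}$ once $(u,v) \in H^1 \cap L^{2,1}$ has been established from the previous step, this yields $(u',v') \in L^2 \cap L^{2,1}$. The gauge factors, being exponentials of purely imaginary quantities, preserve moduli and are inverted without loss of regularity, yielding $(u,v) \in X_{(u,v)}$ with the claimed bound.

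The main obstacle is the $L^{2,1}_x$ control of the Neumann iterates, which requires the full weight structure of $X_{(r_+,r_-)}$. In particular, the singular-weight component $\dot{L}^{2,-2}(\R)$ plays a decisive role: under the change $y = \omega - \omega^{-1}$ the Jacobian $(1+s^2)/s^2$ feeds this weight into the function space $X_0$ of Lemma \ref{p estimate cauchy opearator}, which is precisely calibrated to handle the singularity of the phase $\Theta(\omega) = \tfrac12(\omega - \omega^{-1})$ at $\omega = 0$. This interplay is the key point where the present analysis departs from the derivative NLS case of \cite{Pelinovsky-Shimabukuro}, in which the phase $\omega^2$ has no singularity and no corresponding weight is needed.
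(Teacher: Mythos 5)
Your overall strategy coincides with the paper's: solve the two Riemann--Hilbert problems via Lemma \ref{lemma-solution}, write the Beals--Coifman integral representation for the limits appearing in (\ref{e rec u 1})--(\ref{e rec v 2}), estimate the Born term by the Fourier-type bound (\ref{BC-bound}) after the change of variable $y=\omega-\omega^{-1}$, control the remainder with the Cauchy-operator estimates of Lemma \ref{p estimate cauchy opearator}, and finally invert the unimodular gauge factors as outlined after (\ref{e rec v 2}). This is exactly the route the paper takes, deferring the details to Lemmas 10--12 of \cite{Pelinovsky-Shimabukuro}.

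Two points in your write-up need repair. First, the infinite Neumann series $M_-=I+\mathcal{P}^-[R]+\mathcal{P}^-\bigl[\mathcal{P}^-[R]\,R\bigr]+\cdots$ is not justified: the lemma assumes only Assumption \ref{assumption-a}, with no smallness of $(r_+,r_-)$, so the iteration you propose to ``bootstrap'' need not converge. The argument the paper relies on uses instead the exact two-term decomposition $M_-=I+\mathcal{P}^-[R]+\mathcal{P}^-[(M_--I)R]$, where the last term is controlled through the a priori bound $\|M_\pm-I\|_{L^2_\omega}\leq C\|r\|_{L^2_\omega}$ supplied by Lemma \ref{lemma-solution} together with the estimates (\ref{e technical estimate1})--(\ref{e technical estimate3}); no convergent series is needed. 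Second, in your final step you record only $L^2\cap L^{2,1}$ control of $2iu'+u|v|^2+v$ and $-2iv'+|u|^2v+u$, which yields $(u,v)\in H^{1,1}$ but not the $H^2$ component of $X_{(u,v)}$. You must carry the $H^1_x$ control of the right-hand sides of (\ref{e rec u 1}) and (\ref{e rec v 1}) (which you do assert in your second step) through to the conclusion $(u',v')\in H^1\cap L^{2,1}$, as in the paragraph following (\ref{e rec v 2}), before inverting the gauge factors; with these two adjustments the proposal matches the paper's proof.
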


Lemma \ref{l scattering regularity} proves the first assertion of Theorem \ref{theorem-main}.
Lemma \ref{lemma-recovery} proves the second assertion of Theorem \ref{theorem-main} at $t = 0$.
It remains to prove the second assertion of Theorem \ref{theorem-main} for every $t \in \R$.

\subsection{Time evolution of the spectral data}

Thanks to the well-posedness result of Theorem \ref{theorem-well-posed} and standard
estimates in weighted $L^2$-based Sobolev spaces, there exists a global solution $(u,v) \in C(\R,X_{(u,v)})$ to the MTM system (\ref{e mtm})
for any initial data $(u,v)|_{t=0} = (u_0,v_0) \in X_{(u,v)}$. For this global solution,
the normalized Jost functions (\ref{norm-Jost}) can be extended for every $t \in \R$:
\begin{equation}
\label{norm-Jost-time}
\left\{ \begin{array}{l}
    \varphi_{\pm}(t,x;\lambda)=\psi^{(\pm)}_1(t,x;\lambda) e^{-i x(\lambda^2-\lambda^{-2})/4-i t(\lambda^2 + \lambda^{-2})/4},\\
    \phi_{\pm}(t,x;\lambda)=\psi^{(\pm)}_2(t,x;\lambda) e^{i x(\lambda^2-\lambda^{-2})/4 + i t(\lambda^2 + \lambda^{-2})/4}.
    \end{array} \right.
\end{equation}
where $(\varphi_{\pm},\phi_{\pm})$ still satisfy the same boundary conditions (\ref{e asymptotics psi}).
Introducing the scattering coefficients in the same way as in Section \ref{sec-coef}, we
obtain the time evolution of the scattering coefficients:
\begin{equation}
\label{coefficients-time}
\alpha(t,\lambda) = \alpha(0,\lambda), \quad
\beta(t,\lambda) = \beta(0,\lambda) e^{-i t(\lambda^2 + \lambda^{-2})/2}, \quad \lambda \in \R \cup (i \R) \backslash \{0\}.
\end{equation}
Transferring the scattering coefficients to the reflection coefficients
with the help of (\ref{e def a b+ b-}), (\ref{e def a b+ b- hat}), (\ref{e def r+-}), and (\ref{e def r+- hat})
yields the time evolution of the reflection coefficients:
\begin{equation}
\label{reflection-time}
r_{\pm}(t,\omega) = r_{\pm}(0,\omega) e^{-i t(\omega + \omega^{-1})/2}, \quad \omega \in \R \backslash \{0\}
\end{equation}
and
\begin{equation}
\label{reflection-time-hat}
\hat{r}_{\pm}(t,z) = \hat{r}_{\pm}(0,z) e^{-i t(z + z^{-1})/2}, \quad z \in \R \backslash \{0\}.
\end{equation}
It is now clear that if $r_{\pm}$ and $\hat{r}_{\pm}$ are in $X_{(r_+,r_-)}$ at the initial time $t = 0$,
then they remain in $X_{(r_+,r_-)}$ for every $t \in \R$. Thus, the recovery formulas of Lemma \ref{lemma-recovery}
for the global solution $(u,v) \in C(\R,X_{(u,v)})$ to the MTM system (\ref{e mtm})
hold for every $t \in \mathbb{R}$. This proves
the second assertion of Theorem \ref{theorem-main} for every $t \in \R$. Hence Theorem \ref{theorem-main} is proven.
\begin{remark}
  Adding the time dependence to the Riemann-Hilbert problem \ref{rhp M stat} we find the time-dependent jump relation $M_+(x,t;\omega) - M_-(x,t;\omega) = M_-(x,t;\omega) R(x,t;\omega)$,
  where
  \begin{equation*}
        R(x,t;\omega):=
        \left[
         \begin{array}{cc}
           r_+(\omega) \overline{r_-(\omega)}  & \overline{r_-(\omega)} e^{-\frac{i}{2} (\omega-\omega^{-1})x+\frac{i}{2} (\omega+\omega^{-1})t} \\
          r_+(\omega)e^{\frac{i}{2}(\omega-\omega^{-1}) x-\frac{i}{2} (\omega+\omega^{-1})t} & 0 \\
         \end{array}
        \right].
  \end{equation*}
The same phase function as in $R(x,t;\omega)$ appears in the inverse scattering theory
for the sine-Gordon equation. A Riemann-Hilbert problem
with such a phase function was studied in \cite{ChengVenakidesZhou1999},
where the long-time behavior of the sine-Gordon equation was analyzed.
\end{remark}

\begin{remark}
In the context of the MTM system (\ref{e mtm}), it is more natural to address global solutions
in weighted $H^1$ space such as $H^{1,1}(\R)$ and drop the requirement
$(u,v) \in H^2(\R)$. The scattering coefficients $r_{\pm}$ and $\hat{r}_{\pm}$ are then defined
in the space $X_0$. However, there are two obstacles to develop the inverse scattering transform
for such a larger class of initial data. First, the asymptotic limits (\ref{e expansion m-longer})
and (\ref{e expansion m-longer hat}) are not justified, therefore, the recovery formulas
(\ref{e rec u 1}) and (\ref{e rec v 1}) cannot be utilized. Second, without requirement
$r_{\pm}, \hat{r}_{\pm} \in L^{2,1}(\R)$, the time evolution (\ref{reflection-time})--(\ref{reflection-time-hat})
is not closed in $X_0$ since $r_-, \hat{r}_- \in L^{2,-2}(\R)$ cannot be verified.
In this sense, the space $X_{(u,v)}$ for $(u,v)$ and $X_{(r_+,r_-)}$ for $(r_+,r_-)$ and $(\widehat{r}_+,\widehat{r}_-)$
are optimal for the inverse scattering transform of the MTM system (\ref{e mtm}).
\end{remark}

\section{Conclusion}
\label{sec-conclusion}

We gave functional-analytical details on how the direct and inverse scattering transforms can be applied
to solve the initial-value problem for the MTM system in laboratory coordinates. We showed that initial data
$(u_0,v_0) \in X_{(u,v)}$ admitting no eigenvalues or resonances defines uniquely
the spectral data $(r_+,r_-)$ in $X_{(r_+,r_-)}$. With the time evolution added, the spectral
data $(r_+,r_-)$ remain in the space $X_{(r_+,r_-)}$ and determine uniquely the solution $(u,v)$ to the MTM system
(\ref{e mtm}) in the space $X_{(u,v)}$.

We conclude the paper with a list of open questions.

The long-range scattering of solutions to the MTM system (\ref{e mtm}) for small initial data
for which the assumption of no eigenvalues or resonances is justified can be considered
based on the inverse scattering transform presented here. This will be the subject of the forthcoming work,
where the long-range scattering results in \cite{CandyLindblad} obtained by regular functional-analytical methods
are to be improved.

The generalization of the inverse scattering transform in the case of eigenvalues is easy and can be performed
similarly to what was done for the derivative NLS equation in \cite{PSS}. However, it is not so easy to include
resonances and other spectral singularities in the inverse scattering transform. In particular, the case of
algebraic solitons \cite{KPR} corresponds to the spectral singularities of the scattering coefficients due to
slow decay of $(u,v)$ and analysis of this singular case is an open question.

Finally, another interesting question is to consider the inverse scattering transform for the initial data
decaying to constant (nonzero) boundary conditions. The MTM system (\ref{e mtm}) admits solitary waves
over the nonzero background \cite{BarGet} and analysis of spectral and orbital stability of such solitary waves
is at the infancy stage.

\end{document}